\documentclass[10pt,a4paper,reqno]{amsart} 

\usepackage{amsmath}
\usepackage{bbm}
\usepackage{mathtools}
\usepackage{amssymb}
\usepackage{graphicx}
\usepackage{color}
\usepackage{latexsym}
\usepackage[utf8]{inputenc}
\usepackage[T1]{fontenc}
\usepackage{comment}
\usepackage{stmaryrd}

\newcommand{\ns}{{\mathbb N}} 
\newcommand{\qs}{{\mathbb Q}}  
\newcommand{\cs}{{\mathbb C}} 

\newcommand{\al}{\alpha}

\newcommand{\si}{\sigma}
\newcommand{\la}{\lambda}
\newcommand{\eps}{\varepsilon}

\newcommand{\bu}{\bar u}

\newcommand{\Sn}{{\mathfrak S}}

\newcommand{\GK}{\mathbb{K}}
\newcommand{\GL}{\mathbb{L}}



\DeclareMathOperator{\DR}{DR}

\DeclareMathOperator{\id}{id}
\DeclareMathOperator{\Park}{Park}
\DeclareMathOperator{\Tam}{Tam}

\newcommand{\tG}{\tilde G}
\newcommand{\tz}{\tilde z}



%


\newcommand{\cL}{\mathcal L}

\newcommand{\cT}{\mathcal T}



\newcommand{\p}{permutation}
\newcommand{\ps}{permutations}

\newcommand{\figeps}[3]
{\begin{figure}[ht!]
\begin{center} 
\includegraphics[width=#1cm]{#2.eps}\caption{#3}\label{fig:#2} 
\end{center}
\end{figure}}

%
\newtheorem{Theorem}{Theorem}
\newtheorem{Proposition}[Theorem]{Proposition}

\newtheorem{Corollary}[Theorem]{Corollary}

\newtheorem{Lemma}[Theorem]{Lemma}

\newtheorem{Definition}[Theorem]{Definition}


\newcommand{\beq}{\begin{equation}}
\newcommand{\eeq}{\end{equation}}

\newcommand{\gf}{generating function}
\newcommand{\gfs}{generating functions}
\newcommand{\fps}{formal power series}

\def\emm#1,{{\em #1}}





\graphicspath{{Figures/}}

\catcode`\@=11
\def\section{\@startsection{section}{1}%
 \z@{.7\linespacing\@plus\linespacing}{.5\linespacing}%
 {\normalfont\bfseries\scshape\centering}}

\def\subsection{\@startsection{subsection}{2}%
  \z@{.5\linespacing\@plus\linespacing}{.5\linespacing}%
  {\normalfont\bfseries\scshape}}

\def\subsubsection{\@startsection{subsubsection}{3}%
 \z@{.5\linespacing\@plus\linespacing}{-.5em}
  {\normalfont\bfseries\itshape}}
\catcode`\@=12

%
\addtolength{\textheight}{-1mm} \topmargin5mm
\addtolength{\textwidth}{20mm} 
\hoffset -6mm

\def\cT{\mathcal{T}}
\def\cTn{\cT_n}

\newcommand{\spacebreak}
{\begin{displaymath} \triangleleft \; \lhd \;
\diamond \; \rhd \; \triangleright
  \end{displaymath}}



\begin{document}
\title
[The representation of the symmetric group on $m$-Tamari intervals]
{The representation of the symmetric group\\
on $m$-Tamari intervals}

\author[M. Bousquet-M\'elou]{Mireille Bousquet-M\'elou}
\author[G. Chapuy]{Guillaume Chapuy}
\author[L.-F. Préville-Ratelle]{Louis-François Préville-Ratelle}

\address{MBM: CNRS, LaBRI, UMR 5800, Universit\'e de Bordeaux, 
351 cours de la Lib\'eration, 33405 Talence Cedex, France}
\email{mireille.bousquet@labri.fr}
\address{GC: CNRS, LIAFA, UMR 7089, Universit\'e Paris Diderot - Paris 7, Case 7014,
75205 Paris Cedex 13, France}
\email{guillaume.chapuy@liafa.jussieu.fr}
\address{LFPR: LACIM, UQAM, C.P. 8888 Succ. Centre-Ville, Montréal H3C
  3P8, Canada}
\address{\vskip -5mm(Current address: Instituto de Matem\'atica y F\'{\i}sica, Universidad de
Talca, 2 norte 685, Talca, Chile)
}
\email{preville-ratelle@inst-mat.utalca.cl}
%

\thanks{GC was partially supported by the LIRCO and the European project
  ExploreMaps -- ERC StG 208471. LFPR was partially supported by
  the latter project, by
  a Canadian CRSNG graduate scholarship, an FQRNT ``stage
  international'', and finally  by CONICYT (Comisi\'on
  Nacional de Investigaci\'on Cient\'ifica y Tecnol\'ogica de Chile)
  via the Proyecto Anillo ACT56.}

\keywords{Enumeration --- Representations of the symmetric group --- Lattice paths
  --- Tamari lattices --- Parking functions}
\subjclass[2000]{05A15, 05E18, 20C30}
%

\begin{abstract}
An $m$-ballot path of size $n$ is a path  on the square grid
consisting of north and east unit steps, starting at
$(0,0)$,  ending at $(mn,n)$, and never going below the line
$\{x=my\}$. The set of these paths can be equipped with a lattice structure,
called the $m$-Tamari lattice and denoted by $\cTn^{(m)}$, which
generalizes the usual Tamari  
lattice $\cTn$ obtained when $m=1$. 
This lattice was  
introduced by F.~Bergeron   in connection with the
study of diagonal coinvariant spaces in three sets of $n$
variables. 
 The representation of the symmetric group $\Sn_n$ 
on these spaces  is conjectured to be
closely related to  the natural  representation of $\Sn_n$ on  (labelled)  intervals of
the $m$-Tamari lattice, which we study in this paper.

An interval $[P,Q]$ of 
$\cTn^{(m)}$ is \emm labelled, if the north steps of $Q$ are labelled
from 1 to $n$ in such a way the labels increase along any sequence of
consecutive north steps.
The symmetric group $\Sn_n$ acts on labelled intervals of $\cTn^{(m)}$
by permutation of the labels.
We prove an explicit formula, conjectured by F.~Bergeron
and the third author,
for the character of the associated  representation of $\Sn_n$.
In particular,
the dimension of the representation, that is,
the number of labelled $m$-Tamari intervals of size $n$,
is found to be
$$
{(m+1)^n(mn+1)^{n-2}}.
$$
These results are new, even when $m=1$.

The form of these numbers suggests a connection with parking
functions, but our proof is not bijective. The starting point is a
recursive description of $m$-Tamari intervals. It yields an equation for an
associated generating function, which 
is   a refined  version of the Frobenius series of the
representation.
This equation
 involves two additional variables $x$ and $y$, a derivative with
respect to $y$ and iterated divided differences with respect to $x$.
The hardest part of the proof consists in solving it, and we develop
original techniques to do so,
partly inspired by previous work on polynomial equations with ``catalytic'' variables.
\end{abstract}

\date{today}
\maketitle

\section{Introduction and main result}
An \emph{$m$-ballot path} of size $n$ is a path  on the square grid
consisting of north and east unit steps, starting at
$(0,0)$,  ending at $(mn,n)$, and never going below the line
$\{x=my\}$. 
It is well-known that  there are 
$$\frac
1{mn+1}{(m+1)n \choose n}$$ such paths~\cite{dvoretzky}, and that they
are in bijection with $(m+1)$-ary trees with $n$ inner nodes.

François Bergeron recently defined on the  set $\cT_n^{(m)}$ of $m$-ballot
paths of size $n$ an order relation. It is 
convenient to  describe it via the associated covering relation,
 exemplified in Figure~\ref{fig:push_mWalk}.
\begin{Definition}
\label{def-m-tamari}
Let $P$ and $Q$ be two $m$-ballot paths of size $n$.
 Then $ Q$ covers $P$ if  there exists in  $P$
 an east step $a$, followed by a north step $b$, such that $Q$ is
 obtained from $P$ by swapping $a$ and $S$, 
 where $S$ is the shortest factor of $P$ that begins with $b$ and is
 a (translated) $m$-ballot path.
\end{Definition}

\figeps{12}{push_mWalk}{The covering relation  between
  $m$-ballot paths ($m=2$).}

It was shown in~\cite{bousquet-fusy-preville} that
this order endows $\cT_n^{(m)}$ with a lattice
structure, which is called the \emm $m$-Tamari lattice of size
$n$,. When $m=1$,  it coincides with the classical Tamari
lattice $\cT_n$~\cite{BeBo07,friedman-tamari,HT72,knuth4}. Figure~\ref{fig:lattice_ex} 
shows two of the lattices $\cT_n^{(m)}$. 
The main result of~\cite{bousquet-fusy-preville} gives the number of
intervals in $\cT_n^{(m)}$ as
\beq\label{unlabelled}
\frac {m+1}{n(mn+1)} {(m+1)^2 n+m\choose n-1}.
\eeq
The lattices $\cT_n^{(m)}$ are also known to be EL-shellable~\cite{muehle}.

\begin{figure}[h]
\begin{center}
\includegraphics[height=9cm]{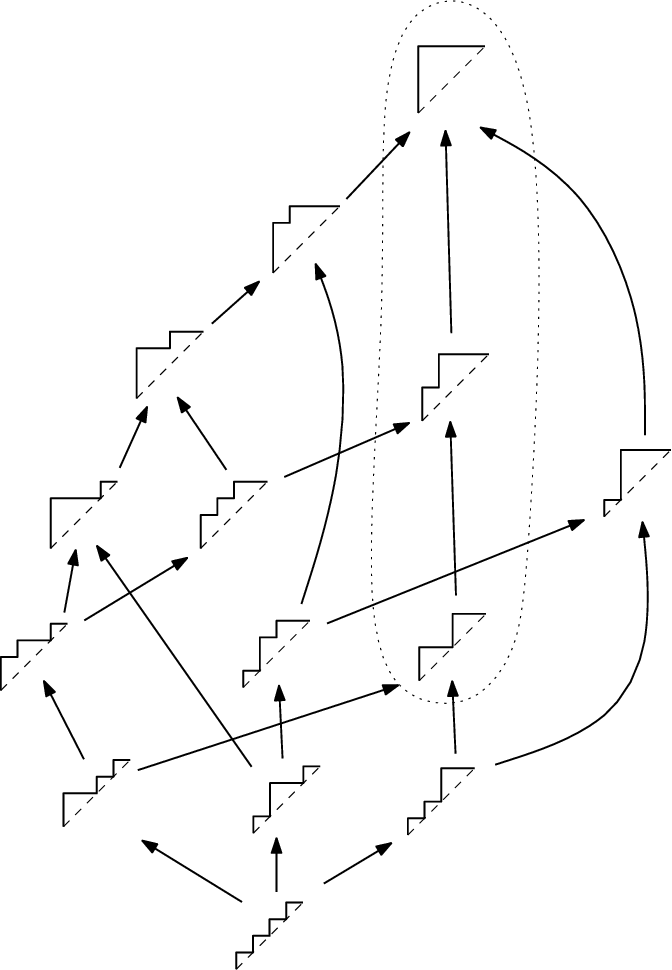}\hspace{10mm}\includegraphics[height=9cm]{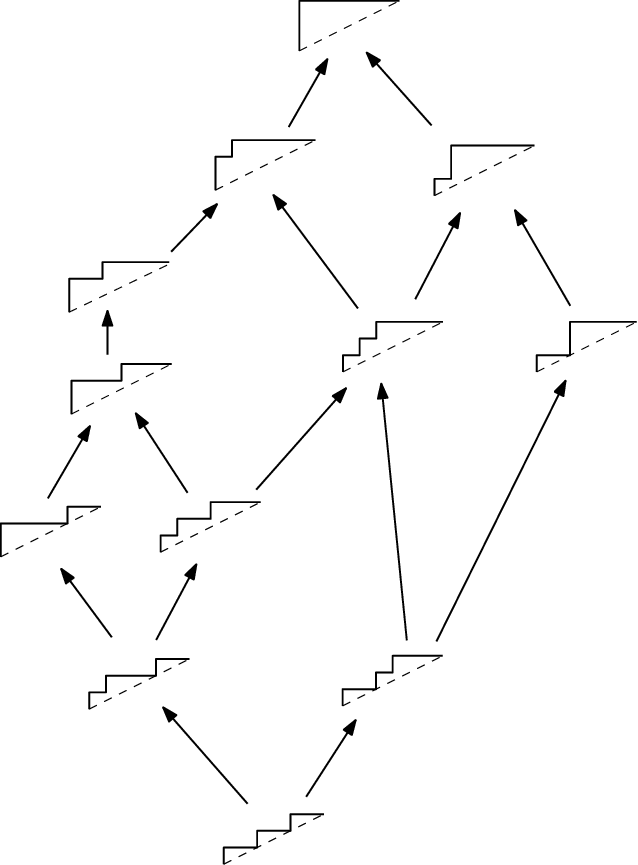}
\end{center}
\caption{The $m$-Tamari lattice $\cT_{n}^{(m)}$ for  $m=1$ and $n=4$ (left)
and for $m=2$ and $n=3$ (right). 
The three walks surrounded by a line in $\cT_{4}^{(1)}$ form a lattice
that is isomorphic to $\cT_{2}^{(2)}$ (see Proposition~\ref{prop:sublattice}).}
\label{fig:lattice_ex}
\end{figure} 

The interest in these lattices 
is motivated by their
--- still conjectural ---  connections with trivariate diagonal coinvariant
spaces~\cite{bergeron-preville,bousquet-fusy-preville}. Some of these connections
are detailed at the end of this introduction. In particular, 
it is believed that the representation of the symmetric group on these
spaces is closely related to the representation of the symmetric group
on \emm labelled, $m$-Tamari intervals. The aim of this paper is to
characterize the latter representation, by describing explicitly its
character. 

So let us define this representation and state our main result. 
Let us call \emm ascent, of a path a maximal sequence of consecutive
north steps. 
An $m$-ballot path of size 
$n$ is \emm labelled, if the north steps are labelled from 1 to $n$,
in such a way the labels increase along 
ascents
(see the upper paths in Figure~\ref{fig:tamari-action}).  These paths
are  in bijection with  $(1,m,\ldots, 
m)$\emm -parking functions, of size $n$, in the sense 
of~\cite{pitman-stanley,yan}: the function $f$ associated with a  path $Q$ satisfies
$f(i)=k$ if the north step of $Q$ labelled $i$ lies at abscissa
$k-1$. 
The symmetric group
$\Sn_n$ acts on labelled $m$-ballot paths of size $n$ by permuting 
labels, and then reordering them in each ascent
(Figure~\ref{fig:tamari-action}, top paths). The character of this representation,
evaluated at a permutation of cycle type $\la=(\la_1, \ldots,
\la_\ell)$, is 
$$
(mn+1)^{\ell-1}.
$$
This formula is easily
proved using  the cycle lemma~\cite{riordan}. As recalled further down,
this representation is closely related to the representation of
$\Sn_n$ on diagonal coinvariant spaces in two sets of variables.

{\begin{figure}[t!]
\begin{center} 
\includegraphics[width=14cm]{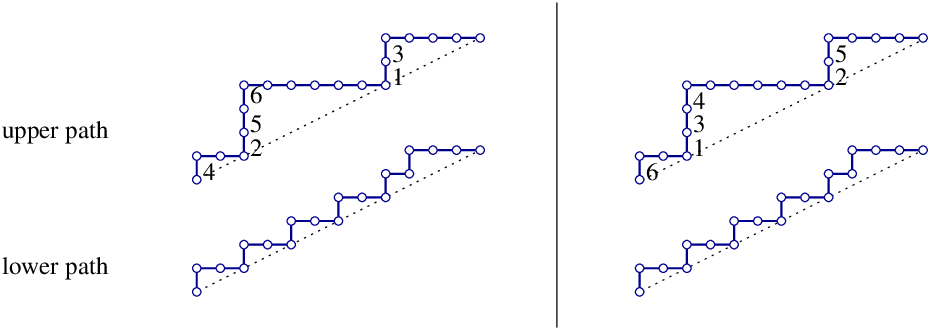}
\caption{A labelled $2$-Tamari interval, and its image under the
  action of $\sigma= 2\, 3\, 5\, 6\, 1\, 4$.}
\label{fig:tamari-action} 
\end{center}
\end{figure}}

Now an $m$-Tamari interval $[P,Q]$ is \emm
labelled, if the upper path $Q$ is labelled.  The symmetric group
$\Sn_n$ acts on labelled intervals of $\cT_n^{(m)}$ by rearranging
the labels of $Q$ as described above 
(Figure~\ref{fig:tamari-action}). 
We call this representation the \emm
$m$-Tamari, representation of $\Sn_n$. Our main result is an explicit
expression for its character 
$\chi_m$, which was
conjectured by Bergeron and the third author~\cite{bergeron-preville}.

\begin{Theorem}\label{thm:char}
Let $\la=(\la_1, \ldots, \la_\ell)$ be a partition of $n$ and
$\sigma$ a permutation of $\Sn_n$ having cycle type $\la$. 
Then for the $m$-Tamari representation of $\Sn_n$,
\beq\label{caractere}
\chi_m(\sigma)= (mn+1)^{\ell-2} \prod_{1 \leq i \leq \ell}
{\binom{(m+1) \lambda_i}{\lambda_i}}. 
\eeq
Since $\Sn_n$ acts by permuting labelled intervals, this is also  the number of
labelled $m$-Tamari intervals left unchanged under the action of $\sigma$.
The value of the character only depends on the cycle type $\la$, and
will sometimes be denoted $\chi_m(\la)$. 

In particular, the dimension of the representation, that is, the
number of labelled $m$-Tamari intervals of size $n$, is
\beq\label{dimension}
 \chi_m({\id})= (mn+1)^{n-2} (m+1)^n.
\eeq
 \end{Theorem}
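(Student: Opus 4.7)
The plan is to translate the character identity~\eqref{caractere} into a symmetric-function identity via the Frobenius characteristic of the $m$-Tamari representation, and then to prove this identity by deriving and solving a functional equation in two catalytic variables. Encoding the representation in its Frobenius characteristic
\[
\Phi_n^{(m)} := \sum_{\lambda \vdash n} \frac{\chi_m(\lambda)}{z_\lambda}\, p_\lambda,
\]
one observes that the stabilizer of a labelled interval $[P,Q]$ under the $\Sn_n$-action is precisely the Young subgroup $\Sn_{\alpha(Q)}$ attached to the ascent composition $\alpha(Q)$ of $Q$, since within each ascent the labels are forced to be increasing. Consequently the representation is a direct sum, over (unlabelled) $m$-Tamari intervals, of representations induced from the trivial character of $\Sn_{\alpha(Q)}$, and therefore
\[
\Phi_n^{(m)} = \sum_{[P,Q]} h_{\alpha(Q)},
\]
where $h_{\alpha(Q)}=\prod_i h_{\alpha_i}$ is the associated product of complete symmetric functions. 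The theorem reduces to showing that this series equals the explicit symmetric function obtained by combining the right-hand sides of~\eqref{caractere} against $p_\lambda/z_\lambda$.

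The next step is to extract from Definition~\ref{def-m-tamari} a \emph{recursive description} of $m$-Tamari intervals and to upgrade it to a functional equation. The natural decomposition of $[P,Q]$ is to split $P$ at its first return to the diagonal $\{x=my\}$: writing $P = N\, P_1\, E^m\, P_2$, the upper path splits correspondingly as $Q = N\, Q_1\, E^m\, Q_2$, producing two smaller $m$-Tamari intervals that must be glued back under two boundary constraints --- one on the ascent structure of $Q$ and one on the contact/spine structure of $P$ forced by the Tamari-compatibility condition. Tracking these with catalytic variables $y$ and $x$ respectively, I would study
\[
F(x,y) \equiv F(x,y;p_1,p_2,\ldots) = \sum_{[P,Q]} x^{a(P)}\, y^{b(Q)}\, h_{\alpha(Q)}
\]
for suitable statistics $a,b$. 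The gluing then dictates the analytic shape of the equation: re-concatenating labelled ascents amounts to picking a marked position in an ascent and thus introduces a $\partial_y$, while re-inserting $m$ east steps so as to satisfy the Tamari cover condition in the glued interval replaces $F$ by a sequence of specializations of the form $F(x), F(1), F(1)/(1-x),\ldots$, giving rise to an iterated divided difference $\Delta_x^{m}$ with $\Delta_x f(x) = (f(x)-f(1))/(x-1)$. The resulting equation combines both operators acting on $F$, which is a non-standard form well outside the scope of the usual kernel method.

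The main obstacle is solving this equation, and this is where the bulk of the work lies. My strategy would be, first, to guess an Ansatz for $F(x,y)$ as a rational function of $x$, $y$, and a small number of auxiliary power series $Z_1, Z_2, \ldots$ in the $p_i$, taking as guide the solution of the classical Tamari case~\cite{BeBo07} and the multiplicative structure over cycles in~\eqref{caractere}, which suggests that the $Z_i$ themselves satisfy explicit polynomial equations encoding the factors $\binom{(m+1)\lambda_i}{\lambda_i}$. Second, one has to check that the Ansatz solves the equation: the ``boundary'' specialization $x=1$ (which annihilates the divided differences) together with $y=0$ should pin down the $Z_i$ uniquely, and standard graded uniqueness in the ring of power series in $p_1,p_2,\ldots$ then forces the proposed expression. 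Finally, extracting $[p_\lambda] \Phi_n^{(m)}$ from the closed form yields~\eqref{caractere}, and specialization to $\lambda = 1^n$, for which $\prod_i \binom{m+1}{1} = (m+1)^n$, gives the dimension formula~\eqref{dimension}.
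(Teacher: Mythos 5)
Your high-level architecture is the same as the paper's: you correctly reduce~\eqref{caractere} to the symmetric-function identity $\Phi_n^{(m)}=\sum_{[P,Q]}h_{\alpha(Q)}$ (this is the paper's Lemma~\ref{lem:ordinary} at $x=y=1$), you correctly see that the recursive decomposition of intervals yields a functional equation combining a $\partial_y$ with an $m$-fold divided difference $\Delta_x^{m}$ applied to $F$, and you correctly anticipate that one must \emph{guess} a closed form and then check it. These match the paper's Sections~\ref{sec:frobenius} and~\ref{sec:eq} and the strategy laid out in Section~\ref{sec:principle}.

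However, the solving step --- which the paper explicitly calls the ``most difficult and original part'' --- is where your proposal has a genuine gap. First, a factual error: you assert that ``$x=1$ annihilates the divided differences,'' but $\Delta f(x)=(f(x)-f(1))/(x-1)$ tends to $f'(1)$, not $0$, as $x\to 1$, so this specialization does not kill those terms. Second, and more importantly, you do not address the central obstruction that the unknown $F(x,1)$ appears \emph{inside} the operator $(tx(F(x,1)\Delta)^{(m)})^{(k)}$, making the equation implicit. The paper breaks this circularity by (i) substituting the conjectured closed form $G_1(u)$ for $G(u,1)$ to obtain a now-linear equation for an auxiliary series $\tG$, (ii) solving that linear equation, and (iii) verifying that $\tG(u,1)=G_1(u)$, which closes the loop and certifies the original guess. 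Third, the actual mechanism for step (ii) is entirely absent from your proposal: it is a change of variables $t=ze^{-mL}$, $x=(1+u)e^{-mK(u)}$, followed by a reflection-type argument based on the $m+1$ roots $u_0=u,u_1,\dots,u_m$ of $v(u_i)=v(u)$ (Lemma~\ref{lem:ui}). Taking a Lagrange-interpolation-weighted linear combination over these roots (Lemma~\ref{lemma:Lagrange}, Proposition~\ref{prop:combi-lin-m}) eliminates the unknown polynomial remainder coming from the divided differences and reduces to a homogeneous linear ODE in $y$, whose solution is then reconstructed by iterated extraction of positive parts in $u$ (Proposition~\ref{prop:extraction}). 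Without this apparatus, an ``Ansatz plus graded uniqueness'' argument cannot get off the ground, because one cannot verify the Ansatz against an equation involving $\Delta_x^{m}$ directly. Finally, note that the paper does not extract $[p_\lambda]$ directly from an Ansatz for $F(x,y)$; it first obtains the clean formula~\eqref{Fx1-2} for $F(t,p;1,1)$ at $u=0$, and only then applies Lagrange inversion in $z$ (Section~5.4) to produce~\eqref{caractere}.
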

 We were unable to find a bijective proof of
these amazingly simple formulas. 
Instead, our proof uses generating functions and a recursive
construction of intervals.  Our main \gf\ records the numbers
$\chi_m(\sigma)$, that is, the number of pairs $(I,\sigma)$ where $I$
is a labelled interval fixed by the \p\ $\sigma$. This \gf\ involves
variables $p_1, p_2, \ldots$ (keeping track of the cycle type of
$\sigma$) and $t$ (for the size of $I$). 
%
The recursive construction of intervals that we use is borrowed
from~\cite{bousquet-fusy-preville}. In order to translate it into an
equation defining our \gf, we need to keep track of one more 
parameter defined on $(I, \sigma)$, using  an additional variable $x$
(Proposition~\ref{prop:eq}, Section~\ref{sec:eq}). The resulting
equation involves discrete derivatives
(a.k.a. divided differences) with respect to $x$, of unbounded
order. The solution of equations with discrete derivatives  of
\emm bounded, order is now well-understood~\cite{mbm-jehanne} (such
equations are for instance involved 
 in the enumeration~\eqref{unlabelled} of unlabelled $m$-Tamari
intervals). But this is the first time we meet an equation of unbounded
order, and its solution is 
the most difficult and original part of the paper. Our approach requires to introduce one more variable $y$, and a
derivative with respect to it. Its principles   are explained  in
Section~\ref{sec:sol1}, and exemplified with the
case $m=1$. The general case is solved in Section~\ref{sec:sol}. 
This approach was already used in a preprint
by the same
authors~\cite{mbm-chapuy-preville}, where the special
case~\eqref{dimension} was proved. Since going from~\eqref{dimension}
to~\eqref{caractere} implies a further complexification, this preprint
may serve as an
introduction to our techniques. 
 The present paper is however
 self-contained.  
Section~\ref{sec:comments} gathers a few final comments. In
particular, we reprove 
the main 
 result of~\cite{bousquet-fusy-preville}
giving the number of \emm unlabelled, intervals of $\cT_n^{(m)}$.

\medskip

In the remainder of this section, we 
recall some of the conjectured connections
between Tamari intervals and trivariate diagonal coinvariant
spaces. They seem to parallel the (now largely proved) connections between
ballot paths and \emm bivariate, diagonal coinvariant
spaces, which have  attracted considerable attention in the past 20
years~\cite{MR1935784,MR1972636,haglund-book,MR2163448,HaiConj,HaimanPreu,loehr-thesis} and are
still a very active area of research
today~\cite{armstrong-arrangements,armstrong-tesler,garsia-hicks-stout,garsia-xin-zabrocki,haglund-touch-points,haglund-polynomial,hicks,lee}.

Let $X=(x_{i,j})_{^{1 \leq i \leq k}_{1 \leq j \leq n}}$ be a matrix
of variables.  The diagonal
coinvariant space $\mathcal{DR}_{k,n}$ is defined as the quotient of
the ring $\mathbb{C}[X]$  of polynomials in the coefficients of $X$ by the ideal
$\mathcal{J}$ generated by 
constant-term free polynomials that are invariant under permuting the columns of
$X$. For example, when $k=2$, denoting $x_{1,j}=x_j$ and $x_{2,j}=y_j$,
the ideal $\mathcal J$ is generated by constant-term free polynomials
$f$ such that for all $\sigma \in \Sn_n$,
$$
f(X)=\sigma(f(X)), \quad \hbox{where} \quad
\sigma(f(X))=f(x_{\sigma(1)}, \ldots, x_{\sigma(n)},y_{\sigma(1)},
\ldots, y_{\sigma(n)}).
$$
An \emph{m-extension} of the spaces $\mathcal{DR}_{k,n}$ is of great importance
here~\cite[p.~230]{garsia-haiman-lagrange}. Let $\mathcal{A}$ be the ideal
of $\cs[X]$ 
generated by \emm alternants, under the  diagonal action described
above;
that is, by polynomials $f$ such that $\sigma(f(X)) = \eps(\sigma) f(X)$.
There is a natural action of $\Sn_n$ on the quotient space
$\mathcal{A}^{m-1} \big{/} \mathcal{J} \mathcal{A}^{m-1}$. Let us
twist this action by the $(m-1)^{\rm st}$ power of the sign
representation $\eps$: this gives rise to spaces
$$
\mathcal{DR}_{k,n}^{m} := {\eps}^{m-1}  \otimes  \mathcal{A}^{m-1} \big{/} \mathcal{J} \mathcal{A}^{m-1},
$$
so that $\mathcal{DR}_{k,n}^{1}=\mathcal{DR}_{k,n}$.
It is now a famous theorem of Haiman~\cite{HaimanPreu,MR2115257} 
that,  as representations of $\Sn_n$,
$$
\mathcal{DR}_{2,n}^{m} \cong \eps \otimes \:   \Park_m(n)
$$
where $  \Park_m(n)$ is the
$m$-parking representation of $\Sn_n$, that is, the representation on
$m$-ballot paths of size $n$ defined above.

In the case of three sets of variables, 
Bergeron and Pr\'eville-Ratelle~\cite{bergeron-preville} conjecture  that, as 
representations of $\Sn_n$, 
$$
\mathcal{DR}_{3,n}^{m} \cong \eps \otimes  \Tam_m(n),
$$
 where $\Tam_m(n)$ is the $m$-Tamari representation of $\Sn_n$.
The fact that the dimension of this space seems to be  given
by~\eqref{dimension} is an 
earlier conjecture due to F.~Bergeron. This  was also observed 
earlier for small values of $n$ by Haiman~\cite{HaiConj} in the case $m=1$.

\section{The refined Frobenius series}
\label{sec:frobenius}
\subsection{Definitions and notation}
\label{sec:notation}

Let $\GL$ be a commutative ring and $t$ an indeterminate. We denote by
$\GL[t]$ 
(resp. $\GL[[t]]$) 
the ring of polynomials 
(resp. \fps) 
in $t$ with coefficients in $\GL$. If $\GL$ is a field, then $\GL(t)$
denotes the field 
of rational functions in $t$.
This notation is generalized to polynomials, fractions
and series in several indeterminates. We 
denote by bars the reciprocals of variables: for instance, $\bu=1/u$, so that
$\GL[u,\bu]$ is the ring of Laurent
polynomials in $u$ with coefficients in $\GL$.
The coefficient of $u^n$ in a Laurent polynomial $P(u)$ is denoted
by $[u^n]P(u)$.

We use classical notation relative to integer partitions,
 which we recall briefly. A \emm partition, $\lambda$ of $n$ is
a non-increasing sequence of integers $\lambda_1\geq \lambda_2\geq \dots \geq
\lambda_\ell>0$ summing to $n$. We write $\lambda \vdash n$ to mean that $\lambda$
is a partition of $n$. Each component $\la_i$ is called a \emm
part,. The number of parts  or \emm length, of the
partition is denoted by $\ell(\lambda)$. 
The \emph{cycle type} of a permutation $\sigma \in \mathfrak{S}_n$ is the
partition of $n$ whose parts are the lengths of the cycles of $\sigma$. 
This partition is denoted by $\lambda(\sigma)$.
The number of permutations $\sigma\in\mathfrak{S}_n$ having 
cycle type $\lambda\vdash n$ equals $\frac{n!}{z_\lambda}$
 where $z_\lambda:=\prod_{i\geq 1} i^{\al_i}\al_i!$,
where $\al_i$ is the number of  parts equal to $i$ in $\lambda$.

We let $p=(p_1,p_2,\dots)$ be an infinite list of independent variables, and for
$\lambda$ a partition of $n$, we let $p_\lambda =
p_{\lambda_1}\cdots p_{\lambda_{\ell(\lambda)}}$. The reader may view
the $p_\lambda$'s as 
power sums in some ground set of variables 
(see {\emph e.g.}~\cite{sagan-book}). This point of view is not really
needed in this paper, but it explains why
we call our main generating function a \emm refined Frobenius series,.
Throughout the paper, we denote by
$\mathbb{K}=\mathbb{Q}(p_1,p_2,\dots)$ the field of rational fractions 
in the~$p_i$'s with rational coefficients.

\medskip

 Given a Laurent polynomial $P(u)$ 
in a variable $u$, we denote by $[u^\ge]P(u)$ the \emm non-negative part of
$P(u)$ in $u$,, defined by
$$
[u^\ge]P(u) = \sum_{i\ge 0} u^i \, [u^i] P(u) .
$$
The definition is then extended by linearity to power series
whose coefficients are Laurent polynomials in $u$. We define similarly the
positive part of $P(u)$, denoted by $[u^>]P(u)$.

\medskip

We now introduce several series and polynomials which 
play an important 
role in this
paper. They depend on two independent variables $u$ and $z$.
First, we let $v\equiv v(u)$ be the following  Laurent polynomial in $u$:
$$
v= (1+u)^{m+1}u^{-m}.
$$
We now consider the following series:
\begin{eqnarray}\label{eq:defV}
  V(v) = \sum_{k\geq 1} \frac{p_k}{k} v^k z^k.
\end{eqnarray}
It is is a formal power series in $z$ whose coefficients are Laurent
polynomials in $u$ over the field $\mathbb{K}$.
Finally we define the two following formal power series in $z$:
\begin{eqnarray}
L\equiv L(z,p)&:=&[u^0] V(v)
=\sum_{k\geq 1} \frac{p_k}{k} 
{(m+1)k\choose k}
z^k,
\label{eq:defL} \\
K(u)\equiv K(z,p;u) &:=&[u^>] V(v)=\sum_{k\geq 1} \frac{p_k}{k}z^k
\sum_{i=1}^{k} {(m+1)k\choose {k-i}} u^{i}. \label{eq:defK}
\end{eqnarray}
As shown with these series, we often do not denote the dependence of our series
in certain variables (like $z$ and $p$ above). This is indicated by
the symbol $\equiv$.

\subsection{A refined theorem}

As stated in Theorem~\ref{thm:char}, the
value of the  character  $\chi_m(\sigma)$ is 
the number of labelled intervals fixed  under
the action of $\sigma$, and  one may see~\eqref{caractere} as an
enumerative result.  Our main result is  a refinement
of~\eqref{caractere} where we 
take into account two more parameters, which we now
define. The first parameter is the number of \emm contacts, of the interval:
 A \emph{contact} of a ballot path $P$ is a vertex of $P$ lying
on the 
line $\{x=my\}$,
 and a \emm contact, of a Tamari interval $[P, Q]$ is a contact of the
 \emm lower, path $P$. We denote by $c(P)$ the number of contacts of $P$.

By definition of the action of $\mathfrak{S}_n$ on $m$-Tamari
intervals, a labelled interval 
$I=[P,Q]$ is 
fixed by a permutation $\sigma \in \mathfrak{S}_n$ if and
only if $\sigma$ stabilizes the set of labels 
of each ascent of $Q$. 
Equivalently, each cycle of $\sigma$ is contained in 
the label set of 
an ascent of $Q$.
If this holds, we let 
$ a_\sigma(Q)$ be the number of cycles of $\sigma$ 
occurring in the \emm first, ascent of $Q$: this is our second parameter.

The main object we handle in this paper is a 
generating function for pairs $(\sigma, I)$, where
$\sigma$ is a permutation and $I=[P,Q]$ is a labelled $m$-Tamari interval fixed by
$\sigma$. In this series $F^{(m)}(t,p;x,y)$, pairs $(\sigma, I)$ are
counted 
by the size $|I|$ of $I$ (variable $t$), the number $c(P)$ of contacts (variable $x$), the
parameter $a_\sigma(Q)$ (variable $y$), and the cycle type of $\sigma$
(one variable $p_i$ for each cycle of size $i$ in~$\sigma$). Moreover,
$F^{(m)}(t,p;x,y)$ is an exponential series in $t$. That is,
 \beq\label{F-def}
 F^{(m)}(t,p;x,y)= 
\sum_{I=[P,Q], \ \rm{\small{labelled}}}\frac{ t^{|I|}}{|I|!}\,  x^{c(P)}
 \sum_{\sigma \in \mathrm{Stab}(I)} 
 \,y^{a_\sigma(Q)} p_{\lambda(\sigma)},
\eeq
where the first and second  sums are taken respectively over all labelled
$m$-Tamari intervals $I$,  and over all permutations
 $\sigma$ fixing $I$.

Note that when $(x,y)=(1,1)$, we have:
$$ 
F^{(m)}(t,p;1,1)= 
\sum_{I=[P,Q]}\frac{ t^{|I|}}{|I|!} 
 \sum_{\sigma \in \mathrm{Stab}(I)} p_{\lambda(\sigma)}
= \sum_{n\geq 0} \frac{ t^n}{n!} \sum_{\sigma \in \Sn_n} \chi_m(\si)
 {p_{\lambda(\si)}}
= \sum_{n\geq 0} t^{n} \sum_{\lambda \vdash n} \chi_m(\lambda)
 \frac{p_\lambda}{z_\lambda},
$$
since the number of intervals fixed by a permutation 
depends only on its cycle type,
and since $\frac{n!}{z_\lambda}$ is the number of permutations of cycle type
$\lambda$.
Hence, in representation theoretic terms,  $[t^n]F^{(m)}(t,p;1,1)$ is 
the \emm Frobenius characteristic, of the $m$-Tamari 
representation of $\Sn_n$,
also equal to
$$
\sum_{\la\vdash n} c(\la) s_\la,
$$
where $s_\la$ is the Schur function of shape $\la$ and $c(\la)$ is the
multiplicity of the irreducible representation associated with $\la$
in the $m$-Tamari representation~\cite[Chap.~4]{sagan-book}.
For this reason, we call $F^{(m)}(t,p;x,y)$ a \emm refined
Frobenius series,. 

The most general  result of this paper is a (complicated) 
parametric expression of $F^{(m)}(t,p;x,y)$,
which takes the following simpler form when $y=1$.
\begin{Theorem}\label{thm:main}
Let  $F^{(m)}(t,p;x,y)\equiv F(t,p;x,y)$ be the refined Frobenius
series of the $m$-Tamari representation, 
defined by~\eqref{F-def}. Let $z$ and
$u$ be two indeterminates, and write 
\beq\label{t-x-param}
t=z e^{-mL}
\quad \hbox{and } \quad x=({1+u})e^{-mK(u)},
\eeq
where $L\equiv L(z,p)$ and $K(u)\equiv K(z,p;u)$ are defined by
\eqref{eq:defL} and \eqref{eq:defK}.
Then $F(t,p;x,1)$ becomes a series in $z$ with polynomial coefficients
in $u$ and the $p_i$, and this series has a simple expression:
\beq\label{Fx1}
F(t,p;x,1)= ({1+\bu})e^{K(u)+L} \left((1+u)e^{-mK(u)} - 1\right)
\eeq
with $\bu=1/u$. In particular, 
in the limit $u\rightarrow 0$, we obtain
\beq \label{Fx1-2}
F(t,p;1,1)=
e^{L}\left(1-m\sum_{k\geq1}\frac{p_k}{k}z^k
{(m+1)k\choose k-1}
\right).
\eeq
\end{Theorem}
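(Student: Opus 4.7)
The plan is to derive Theorem~\ref{thm:main} from the functional equation for $F(t,p;x,y)$ that will be established in Section~\ref{sec:eq} (Proposition~\ref{prop:eq}). That equation comes from a recursive decomposition of $m$-Tamari intervals and, as announced in the introduction, involves a continuous derivative in $y$ together with iterated divided differences in $x$. My strategy is in two stages: first specialize at $y=1$, then solve the resulting equation by combining the parametric change of variables~\eqref{t-x-param} with the ansatz~\eqref{Fx1}.

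First I would pass to $y=1$ in the functional equation. The statement itself asserts that the answer is much simpler there, so either the $\partial_y$ contribution vanishes automatically (if the combinatorial factor in front of it has a zero at $y=1$) or it reduces to a known quantity that can be bypassed; either way one is left with an equation in the sole catalytic variable $x$. The substitution $t = z e^{-mL}$ and $x = (1+u)e^{-mK(u)}$ is then designed to trivialize the action of the divided difference in $x$: the Laurent polynomial $v = (1+u)^{m+1}u^{-m}$ that underlies $L$ and $K(u)$ is precisely the Lagrange inversion kernel for $m$-ballot paths, and the coefficients $\binom{(m+1)k}{k-i}$ already appear in~\eqref{eq:defL}--\eqref{eq:defK}. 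Under this substitution the unknown becomes a series in $z$ with polynomial coefficients in $u$ and the $p_i$, and the equation should reduce to a manageable polynomial identity in $u$.

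The main obstacle is the verification step. An equation involving a divided difference $(g(x) - g(x^\star))/(x - x^\star)$ is handled by introducing a second parameter $u^\star$ such that $(1+u^\star)e^{-mK(u^\star)} = x^\star$ and then performing the algebra in $u$ rather than $x$. One must therefore identify the ``conjugate branch'' $u^\star$ explicitly, substitute the candidate~\eqref{Fx1} on both sides, and check that all exponential and rational factors combine as expected. This is the delicate calculation, and is precisely where the new techniques developed in Sections~\ref{sec:sol1} and~\ref{sec:sol} do the heavy lifting; the authors flag it as the most difficult and original part of the paper.

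Finally, formula~\eqref{Fx1-2} is a direct consequence of~\eqref{Fx1} via the limit $u\to 0$. From~\eqref{eq:defK} one reads off $K(0)=0$ and
\[
K(u) = u \sum_{k\ge 1}\frac{p_k}{k}\binom{(m+1)k}{k-1}z^k + O(u^2),
\]
so
\[
(1+u)e^{-mK(u)} - 1 = u\Bigl(1 - m\sum_{k\ge 1}\frac{p_k}{k}\binom{(m+1)k}{k-1}z^k\Bigr) + O(u^2).
\]
The pole $1+\bu = (u+1)/u$ cancels this leading factor of $u$, while $e^{K(u)+L}$ tends to $e^{L}$, and~\eqref{Fx1-2} drops out.
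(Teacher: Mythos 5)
Your high-level intuition about the change of variables~\eqref{t-x-param} and working in $u$ rather than $x$ matches the paper, and your derivation of~\eqref{Fx1-2} from~\eqref{Fx1} via the $u\to 0$ limit is correct. But the core of your strategy has two genuine gaps that do not survive contact with the actual functional equation.

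First, you propose to \emph{specialize at $y=1$ before solving}. This is not what the paper does, and it does not work. Equation~\eqref{eq:Fb} expresses $\partial_y F(x,y)$ in terms of an operator applied to $F(x,y)$, but that operator itself is built from $F(x,1)$ — the very quantity to be determined. Setting $y=1$ gives an equation relating $\partial_y F(x,y)\big|_{y=1}$ to $F(x,1)$ and does not close. The paper's trick is exactly the opposite: keep $y$ free, so that the symmetrized version becomes a \emph{homogeneous linear ODE in $y$} for a suitable combination of $\tilde G(u_i,y)$, which is then solved from the initial condition at $y=0$ (not $y=1$). The $y$-derivative is the key mechanism, not an obstacle to remove.

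Second, your ``conjugate branch $u^\star$'' idea only captures $m=1$. The Laurent polynomial $v=(1+u)^{m+1}u^{-m}$ has $m+1$ preimages $u_0=u,u_1,\dots,u_m$ (Lemma~\ref{lem:ui}), and the right move is not a single substitution $u\mapsto u^\star$ but the Lagrange-interpolation combination $\sum_{i=0}^m \tilde G(u_i,y)/\prod_{j\ne i}(A_i-A_j)$ with $A_i=A(u_i)$ (Proposition~\ref{prop:combi-lin-m}), precisely because the error terms left by the operator $\Lambda$ are polynomials of degree $<m$ in $A(u)$ and get killed by~\eqref{eq:Lagrange-poly}. A two-point divided difference trick will not annihilate those terms for $m\ge 2$.

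You are also missing the logical scaffolding that makes the verification legitimate. Because $F(x,1)$ appears inside the operator, one cannot simply ``substitute the candidate~\eqref{Fx1} on both sides.'' The paper first \emph{replaces} $G(u,1)$ inside the operator by the guessed $G_1(u)$, thereby defining an auxiliary series $\tilde G(u,y)$ via a well-posed recursion; it then shows that the symmetrized equation~\eqref{eq:combi-lin-y} together with $\tilde G(-1,y)=0$ uniquely determines $\tilde G$ (the ``reconstruction'' Proposition~\ref{prop:extraction}, via iterated positive-part extraction); and finally it checks directly that $G_1(u)$ satisfies the $y=1$ instance of the characterizing equation, which forces $\tilde G(u,1)=G_1(u)$ and hence $\tilde G=G$. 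Without this bootstrap, ``verify the ansatz'' is circular. So while the destination of your plan is right, the route as written — specialize at $y=1$, use one conjugate root, substitute and check — would fail for $m\ge 2$ and is logically incomplete even for $m=1$.
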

\noindent 
The form of this theorem is reminiscent of the enumeration of
unlabelled $m$-Tamari intervals~\cite[Thm.~10]{bousquet-fusy-preville}, for which finding the ``right''
parametrization of the variables $t, x$ and $y$ was 
an important step in the
solution. This will also be the case here.

Theorem~\ref{thm:char} will follow from Theorem~\ref{thm:main} by extracting 
the coefficient of $p_\la/z_\la$ 
in $F(t,p;1,1)$ (via Lagrange's inversion).
Our expression of $F^{(m)}(t,p;x,y)$ is given in
Theorem~\ref{thm:trivariate}. When 
$m=1$, it takes a reasonably simple form, which we now present. The
case $m=2$ is also detailed 
at the end of Section~\ref{sec:sol} (Corollary~\ref{coro:m=2}).
\begin{Theorem}\label{thm:1}
Let  $F^{(1)}(t,p;x,y)\equiv F(t,p;x,y)$ be the refined Frobenius
series of the $1$-Tamari representation, 
defined by~\eqref{F-def}. 
Define the series $V(v), L$ and $K(u)$ by
{\rm{(\ref{eq:defV}--\ref{eq:defK})}}, with $m=1$, and perform the change of
variables~\eqref{t-x-param}, still with $m=1$.
Then $F(t,p;x,y)$ becomes a formal power series in $z$ with polynomial coefficients
in $u$ and $y$,  which is given by
\beq\label{F-param-y1}
F(t,p;x,y)= (1+u)\ [u^{\ge}] \Big(e^{yV(v)-K(u)} - \bu e^{yV(v)-K(\bu)} \Big),
\eeq
with $\bu=1/u$.
\end{Theorem}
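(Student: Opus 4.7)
The plan is to verify that the claimed parametric expression~\eqref{F-param-y1} solves the functional equation characterising $F^{(1)}(t,p;x,y)$, and to conclude by uniqueness. The equation comes from the recursive decomposition of labelled Tamari intervals mentioned in Section~\ref{sec:eq} and stated in Proposition~\ref{prop:eq}, specialised to $m=1$. Since this equation determines $[t^n]F$ from $[t^k]F$ with $k<n$, uniqueness holds automatically, and only the existence of a solution of the prescribed form has to be checked. For $m=1$ the equation involves a single divided difference $\Delta_x F(x,y) := (F(x,y)-F(1,y))/(x-1)$ with respect to the catalytic variable $x$, together with one derivative $\partial_y$; the latter encodes the fact that inserting a new cycle in the first ascent of the upper path $Q$ modifies the power-sum variables in a way tracked by~$y$.

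Next I would introduce the parametrisation $t = ze^{-L}$, $x = (1+u)e^{-K(u)}$ from Theorem~\ref{thm:main}. The choice is dictated by two observations. First, $u=0$ sends $x$ to $1$, so the divided difference $\Delta_x$ becomes a discrete difference operator in $u$. Second, and more crucially, the Laurent polynomial $v=(1+u)^2/u$ is invariant under $u\leftrightarrow \bu$---a symmetry special to $m=1$---so $V(v)$ is symmetric as well, and one has the clean splitting $V(v)=L+K(u)+K(\bu)$. The involution $u\leftrightarrow \bu$ then plays the role of a ``second branch'' of the equation $x=(1+u)e^{-K(u)}$, much in the spirit of the kernel method.

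Denote the right-hand side of~\eqref{F-param-y1} by $G(z,u,y)$, and verify that $G$ solves the functional equation. The main ingredients are: the expression of $G|_{u=0}$, which should match $F(t,p;1,y)$ and reduces to the formula of Theorem~\ref{thm:main} when $y=1$; the action of $\partial_y$ on $G$, which multiplies each bracketed exponential by $V(v)=L+K(u)+K(\bu)$ and thereby partially recombines with the factors $e^{\pm K(u)}$ already present; and the interplay of the $[u^{\ge}]$ operator with multiplication by $u$ or $\bu$, which permits one to fold the $K(\bu)$-branch back into the $K(u)$-branch using the $u\leftrightarrow \bu$ symmetry of $V(v)$. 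Everything collapses to a Laurent-polynomial identity in $u$ that can be checked term by term.

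The main obstacle is this verification. Each manipulation is in itself elementary, but the bookkeeping of the $[u^{\ge}]$ operator, the symmetry $u\leftrightarrow \bu$, and the non-commutativity of $[u^{\ge}]$ with multiplication by $u$ or $\bu$ make the calculation delicate. The conceptual difficulty, stressed by the authors, is that classical techniques for functional equations with one catalytic variable do not cover the $\partial_y$ term, which couples different $p$-degrees and forces one to retain the full $u$-dependence of $G$ throughout the argument; resolving this is precisely the role played by the positive/negative-part decomposition $V(v)=L+K(u)+K(\bu)$ that underlies the ansatz~\eqref{F-param-y1}.
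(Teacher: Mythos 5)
You have correctly identified the strategic backbone of the argument: the uniqueness of the power-series solution to the functional equation, the special $m=1$ symmetry $v(u)=v(\bu)$, and the decomposition $V(v)=L+K(u)+K(\bu)$ as constant, positive and negative parts of $V$ in~$u$. These are all essential. But the proposal differs from the paper's route in a way that creates a genuine gap.

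The paper does \emph{not} verify that the full candidate~\eqref{F-param-y1} satisfies the equation. Instead it guesses only the diagonal value $G_1(u)=F(t,p;x,1)$, plugs that into the operator so that~\eqref{eq:G} becomes a \emph{linear} equation for $\tG(u,y)$ with known coefficients, and then \emph{derives} the formula for $\tG(u,y)$ by solving that linear equation; the only thing that has to be \emph{checked} at the end is the consistency $\tG(u,1)=G_1(u)$, which is a one-line computation. The derivation hinges on Lemma~\ref{lemma:simplereflection}, the ``reflection'' lemma, which is exactly the tool you are missing: it says that for every $k$,
$\bigl((1+u)(1+\bu)\Omega\bigr)^{(k)}\tG(u,y) = (1+u)^k(1+\bu)^k\tG(u,y) - P_k(v)$
for some $P_k$ that is a polynomial in $v$, hence symmetric under $u\leftrightarrow\bu$. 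Subtracting the $\bu$-version of the equation from the $u$-version kills every $P_k$ simultaneously and leaves a \emph{homogeneous} linear ODE in $y$ for $\tG(u,y)-\tG(\bu,y)$, whose solution is immediate. Only then does one reconstruct $\tG(u,y)$ from this antisymmetrized combination, using that $\tG$ has polynomial coefficients in~$u$ and vanishes at $u=-1$, which is where the $[u^\ge]$ operator and the $(1+u)$ prefactor come from.

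Your plan, by contrast, proposes to verify the full $F(t,p;x,y)$ directly against~\eqref{eq:Fb}. That requires computing $\bigl((1+u)(1+\bu)\Omega\bigr)^{(k)}$ applied to the candidate for \emph{every} $k\geq 1$ and summing; you gesture at ``folding the $K(\bu)$-branch back into the $K(u)$-branch'' and claim ``everything collapses to a Laurent-polynomial identity,'' but you never explain how the iterated operators are to be brought under control. Without the reflection lemma or an equivalent device, that computation is not elementary bookkeeping but the crux of the matter, and as stated your verification would not get off the ground. You would also need to make explicit the uniqueness-of-reconstruction step (that a series with polynomial coefficients in $u$ vanishing at $u=-1$ is determined by its $u\leftrightarrow\bu$-antisymmetrization), which the paper carries out via~\eqref{1+u} and the evaluation at $u=-1$. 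In short: right ansatz, right symmetries, but the proposal skips the two lemmas that make the argument work — the reflection lemma and the positive-part reconstruction — and proposes a verification that is actually harder than the paper's constructive derivation.
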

\noindent{\bf Remarks}\\
1. It is easily seen that the case $y=1$ of~\eqref{F-param-y1} reduces to the case $m=1$ of~\eqref{Fx1} (the proof relies on
the fact that 
$L$ and $K(u)$ are respectively the constant term and the positive
part of $V(v)$ in $u$,
and that $v=(1+u)(1+\bu)$ is symmetric in $u$ and $\bu$). 

\smallskip
\noindent 2. When $p_1=1$ and $p_i=0$ for $i>1$, the only permutation
that contributes in~\eqref{F-def} is the identity. We are thus simply counting labelled
$1$-Tamari intervals, by their size (variable $t$), the number of
contacts (variable $x$) and the size of the first ascent (variable
$y$). 
Still taking $m=1$, 
we have 
$V(v)=zv=z(1+u)(1+\bu)$, 
$K(u)=zu$ and the extraction of the positive part in $u$ in~\eqref{F-param-y1}
can be performed explicitly:
\begin{eqnarray*}
  {F(t,p;x,y)}&=& (1+u) [u^{\ge}] \left( e^{yzv-zu}-\bu
    e^{yzv-z\bu}\right)
\\
&=&(1+u)e^{2yz}\left(\sum_{0\le i \le j} u^{j-i} \frac{z^{i+j}y^i (y-1)^j}{i! j!}
- 
\sum_{0\le j < i} u^{i-j-1} \frac{z^{i+j}y^i (y-1)^j}{i! j!}\right).
\end{eqnarray*}
When $x=1$, that is, $u=0$, the double
sums in this expression reduce to simple sums, and the
\gf\ of labelled Tamari intervals, counted by the size and the height
of the first ascent, is expressed in terms of Bessel functions:
$$
\frac{F(t,p;1,y)}{ e^{2yz}}= \sum_{i\ge 0 }  \frac{z^{2i}y^i (y-1)^i}{i!^2}
- 
\sum_{ j\ge 0 } \frac{z^{2j+1}y^{j+1} (y-1)^j}{(j+1)! j!}.
$$

\section{A functional equation}
\label{sec:eq}

The aim of this section is to establish a functional equation  satisfied
by the series $F^{(m)}(t,p;x,y)$. 
\begin{Proposition}\label{prop:eq}
For $m\ge 1$, let  $ F^{(m)}(t,p;x,y)\equiv F(x,y)$ be the refined
Frobenius series of the $m$-Tamari representation, defined by~\eqref{F-def}.
Then 
\begin{eqnarray*}
  \label{eq:F}
  F(x,y)&= &\sum_{k\geq 0} {\tilde h_k(y)}
  \left(tx(F(x,1)\Delta)^{(m)}\right)^{(k)} (x)
\\
&=&
  \exp\left(y \sum_{k\ge 1}\frac{p_k}{k} \left( tx
  (F(x,1)\Delta)^{(m)}\right)^{(k)}\right) (x),
\end{eqnarray*}
where 
\beq\label{htdef}
\tilde h_k(y) = \sum\limits_{\lambda\vdash
  k}\frac{p_\lambda}{z_\lambda}\, y^{\ell(\lambda)},
\eeq
$\Delta$ is the following divided difference operator
$$
\Delta S(x)=\frac{S(x)-S(1)}{x-1},
$$
and the powers $(m)$ and $(k)$ mean that the operators
are applied respectively $m$ times 
and $k$ times.

Equivalently,
$F(x,0)=x$ and
\beq\label{eq:Fb}
\frac{\partial F}{\partial y} (x,y)=
\sum_{k\geq 1} \frac{p_k}{k} \left(tx(F(x,1)\Delta)^{(m)}\right)^{(k)}
(F(x,y)).
\eeq
\end{Proposition}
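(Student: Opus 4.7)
The plan is to derive the first form of the equation combinatorially from a recursive decomposition of labelled $m$-Tamari intervals based on the first ascent of the upper path, and then to verify that the exponential form and the differential form~\eqref{eq:Fb} are purely algebraic consequences.

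\textbf{Recursive decomposition.} Given a labelled $m$-Tamari interval $I=[P,Q]$ of size $n\geq 1$, let $k$ denote the length of the first ascent of $Q$. Since labels strictly increase along ascents, the first-ascent labels are exactly $\{1,\dots,k\}$. A permutation $\sigma$ stabilizes $I$ if and only if it preserves the label set of each ascent of $Q$, so $\sigma$ decomposes as a product $\sigma_1\sigma_2\cdots\sigma_s$ with $\sigma_1\in\Sn_k$ arbitrary and the remaining factors acting independently on the later ascents. The parameter $a_\sigma(Q)$ counts only the cycles of $\sigma_1$. After the exponential factor $1/n!$ has absorbed the multinomial $\binom{n}{j_1,\dots,j_s}$ counting the labellings compatible with a given unlabelled shape, the first ascent contributes exactly $\tilde h_k(y)$ as in~\eqref{htdef}, while every subsequent ascent of length $j$ contributes $\tilde h_j(1)$. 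These latter factors are exactly what gets packaged into the series $F(x,1)$ occurring in the definition of $\Phi := tx(F(x,1)\Delta)^{(m)}$.

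\textbf{Spine-and-subinterval structure.} The $m$-Tamari decomposition of~\cite{bousquet-fusy-preville} attaches to each north step of the first ascent of $Q$ an $m$-tuple of sub-intervals, and the constraint $P\le Q$ together with the $m$-ballot condition forces these sub-intervals to be glued via the divided difference $\Delta$, which encodes the ``contact reset'' at the diagonal $x=my$. One north step together with its attached data contributes the operator $\Phi$: the factor $tx$ accounts for the north step itself (size increment $t$, new contact $x$); each of the $m$ applications of $F(x,1)\Delta$ attaches one sub-interval, whose own top ascent is \emph{not} the first ascent of the ambient $Q$, so that the $y$-weight does not propagate into it and the sub-interval is counted by the $y=1$ specialization $F(x,1)$. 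Iterating $\Phi$ exactly $k$ times and evaluating at the initial ``empty-state'' function $x$ reproduces the unlabelled (but permutation-weighted) contribution of all intervals whose first ascent has length $k$. Summing over $k$ with the first-ascent weight $\tilde h_k(y)$ yields
$$F(x,y)=\sum_{k\geq 0}\tilde h_k(y)\,\Phi^{(k)}(x).$$

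\textbf{Algebraic reformulations.} Viewing $\Phi$ as a $\GK[[t]]$-linear operator on power series in $x$, the classical scalar identity
$$\sum_{k\geq 0}\tilde h_k(y)\, u^k=\exp\!\Bigl(y\sum_{k\geq 1}\frac{p_k}{k}u^k\Bigr)$$
(obtained by expanding the exponential and reindexing ordered tuples by their underlying partition) lifts to an operator identity with $u$ replaced by $\Phi$, since all the powers $\Phi^{(k)}$ commute. Applying both sides to the function $x$ gives the exponential form. For the differential form, write $F(x,y)=\exp(yA)(x)$ with $A:=\sum_{k\geq 1}\frac{p_k}{k}\Phi^{(k)}$; since $A$ commutes with $\exp(yA)$, differentiating in $y$ gives $\partial_y F(x,y)=A\bigl(F(x,y)\bigr)$, which is precisely~\eqref{eq:Fb}. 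The initial condition $F(x,0)=x$ follows from $\tilde h_0=1$ and $\tilde h_k(0)=0$ for $k\geq 1$. The main obstacle is the spine decomposition of Step~2: one must check bijectivity of the decomposition of an $m$-Tamari interval into a first-ascent spine carrying $m\cdot k$ sub-intervals, and verify in particular that the $y$-weight is correctly decoupled, so that those sub-intervals are indeed enumerated by $F(x,1)$ rather than $F(x,y)$.
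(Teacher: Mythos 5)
Your overall strategy matches the paper's: first express $F^{(m)}$ as a weighted sum over unlabelled intervals with factors $\tilde h_k(y)$ and $h_j$ attached to the ascents of the upper path (this is the paper's Lemma~\ref{lem:ordinary}), then argue that the intervals whose first ascent has length $k$ are generated by $\left(tx(F(x,1)\Delta)^{(m)}\right)^{(k)}(x)$ (the paper's Lemma~\ref{lemma:H}), and finally derive the exponential and differential forms by pure algebra. Your third step is correct and is essentially what the paper does: reindex $\sum_k \tilde h_k(y) u^k = \exp\bigl(y\sum_k \frac{p_k}{k}u^k\bigr)$ and lift it to an operator identity, then differentiate in $y$.

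However, there is a genuine gap in the second step, and you flag it yourself: you do not actually prove that iterating the operator $\Phi = tx(F(x,1)\Delta)^{(m)}$ exactly $k$ times on $x$ enumerates the intervals whose first ascent has $k$ blocks with the correct weights. This is precisely the heart of the proposition, and the paper devotes Lemma~\ref{lemma:H} to it. The paper does \emph{not} get the operator $\Phi$ from a simple ``each north step of the first ascent carries an $m$-tuple of sub-intervals'' picture; instead it embeds $m$-Tamari intervals as $m$-Dyck paths in the ordinary Tamari lattice $\cT_{mn}$ (Proposition~\ref{prop:sublattice}), introduces the auxiliary notion of $\ell$-augmented $m$-Tamari intervals, and establishes by a lexicographic induction on $(k,\ell)$ the formula $F_{k,\ell}(x)=(F(x,1)\Delta)^{(\ell)}\left(tx(F(x,1)\Delta)^{(m)}\right)^{(k)}(x)$ for $\ell>0$, with a separate treatment of the boundary case $\ell=1$ (where the non-proper pointed intervals contribute and change $\Delta$ into $\Delta\circ(x\cdot)$). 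The divided difference $\Delta$ arises from counting the ways to point a non-initial contact, via the identity $\sum_i F_i(1+x+\cdots+x^{i-1}) = \Delta F(x)$, combined with the pointed/non-pointed decomposition of Proposition~\ref{prop:decomp}; it is not a ``contact reset at the diagonal.'' Without carrying out this induction (or a genuine substitute) your second step is an outline, not a proof.

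One more minor flaw in Step~1: you claim ``the first-ascent labels are exactly $\{1,\dots,k\}$.'' This is false; the labelling only requires increase \emph{within} each ascent, so labels from different ascents interleave. The correct argument (used in the paper's Lemma~\ref{lem:ordinary}) is that the exponential generating function of $I$-partitioned permutations factors over ascents, which you more or less invoke afterwards, so the conclusion survives, but the stated reason does not.
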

The above equations rely on  a recursive 
construction of labelled
$m$-Tamari intervals. Our description of the
construction is  self-contained, but we refer to~\cite{bousquet-fusy-preville} 
for several proofs and details. 

\subsection{Recursive construction of Tamari intervals}

We start by modifying the appearance of 1-ballot paths. 
We apply a 45 degree rotation
to transform them into \emm
Dyck paths,.  A Dyck path of size $n$ consists  of steps $u=(1,1)$ (up steps) and
steps $d=(1,-1)$ (down steps), starts at $(0,0)$, ends at $(2n,0)$ and never goes
below the $x$-axis.  We say that an up step has \emm rank, $i$ if it is the
$i^{\hbox{\small th}}$ up step of the path. We often represent Dyck
paths by words on the alphabet $\{u,d\}$.
An ascent is thus now a maximal sequence of $u$ steps. 

Consider  an $m$-ballot path of size $n$, and  replace each north step
 by a sequence of $m$ north steps. This gives a 1-ballot path of size $mn$, and
 thus, after a rotation, a Dyck path. In this path, for each
 $i\in\llbracket 0,n-1\rrbracket$, 
the up steps of ranks $mi+1,\ldots,m(i+1)$
are consecutive. We call the Dyck paths satisfying this property
\emph{$m$-Dyck paths}, and say that the up steps
of ranks $mi+1,\ldots,m(i+1)$ form a \emph{block}. Clearly, $m$-Dyck
paths of size $mn$ (\emm i.e.,, having $n$ blocks) are in
one-to-one correspondence with $m$-ballot paths of size $n$.

We often denote by $\cT_n$, rather than $\cT_n^{(1)}$, the usual
Tamari lattice of size $n$. Similarly, the intervals of this lattice are called
Tamari intervals rather than 1-Tamari intervals.
As proved in~\cite{bousquet-fusy-preville}, the 
transformation of $m$-ballot paths into $m$-Dyck paths maps $\cT_n^{(m)}$ on a sublattice of $\cT_{mn}$.
\begin{Proposition}[{\cite[Prop.~4]{bousquet-fusy-preville}}]
\label{prop:sublattice}
The set of $m$-Dyck paths with $n$ blocks is the sublattice of
$\mathcal{T}_{nm}$ consisting of the paths that are larger than or
equal to $u^m d^m \ldots u^m d^m$. It is
order isomorphic to $\mathcal{T}_{n}^{(m)}$. 
\end{Proposition}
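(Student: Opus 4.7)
The plan is to establish three assertions in sequence: (i) the set of $m$-Dyck paths with $n$ blocks coincides with the order filter of $\cT_{mn}$ generated by $(u^md^m)^n$; (ii) this filter is a sublattice; and (iii) the natural inflation bijection is an order isomorphism from $\cT_n^{(m)}$ onto it. I will use the description of a Tamari cover in $\cT_{mn}$ that mirrors Definition~\ref{def-m-tamari} after the $45$-degree rotation: a cover $P \lessdot Q$ swaps a down step $d$ in $P$ with the shortest Dyck factor $S$ of $P$ that begins with the up step immediately following~$d$.

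For (i), the key lemma is that if $P$ is $m$-Dyck and $P \lessdot Q$ in $\cT_{mn}$, then $Q$ is still $m$-Dyck. Because $P$ is $m$-Dyck, any up step preceded by a down step is the first step of a full block $u^m$; hence the factor $S$ involved in the cover begins with $u^m$. Moreover, $S$ returns for the first time to its starting height, and inside a $u$-block the height is strictly increasing, so $S$ cannot end in the middle of a block. Therefore $S$ decomposes as a concatenation of full $m$-blocks of $u$-steps separated by down steps, and swapping $d$ past $S$ preserves the $m$-Dyck property. Iterating, every path reachable from $(u^md^m)^n$ in $\cT_{mn}$ is $m$-Dyck. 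Conversely, any $m$-Dyck path can be decreased by reverse covers inside the $m$-Dyck set down to $(u^md^m)^n$, so the filter equals the whole set of $m$-Dyck paths.

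For (ii), since $\cT_{mn}$ is a lattice and the $m$-Dyck paths form an order filter by (i), the join of two $m$-Dyck paths is automatically $m$-Dyck. The meet of two paths lying above $(u^md^m)^n$ is still above $(u^md^m)^n$ in $\cT_{mn}$, hence is $m$-Dyck by (i); so the $m$-Dyck paths form a sublattice.

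For (iii), the inflation map $\Phi$ that replaces each north step of an $m$-ballot path by $u^m$ and each east step by $d$ (after rotation) is a bijection onto the $m$-Dyck paths with $n$ blocks. To check that $\Phi$ is an order isomorphism, I match covering relations on both sides. An $m$-Tamari cover of $P$ swaps an east step with the shortest following translated $m$-ballot subpath $S$; applying $\Phi$ transforms this into swapping a down step with $\Phi(S)$, a Dyck factor beginning with $u^m$, which is exactly a Tamari cover in $\cT_{mn}$ lying within the $m$-Dyck sublattice. Conversely, the block-preservation analysis of (i) shows that every Tamari cover between two $m$-Dyck paths arises in this way: the down step is followed by the start of a full $m$-block, and the Dyck factor swapped past decomposes into whole $m$-blocks, hence is $\Phi$ of some translated $m$-ballot subpath. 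The main obstacle throughout is the block-preservation statement in (i); once that geometric fact is pinned down, (ii) and (iii) reduce to bookkeeping.
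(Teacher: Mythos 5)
The paper does not contain a proof of this proposition (it is cited from \cite{bousquet-fusy-preville}), so there is no in-paper proof to compare against; I evaluate the proposal on its own terms.

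Your strategy is sound and the block-preservation argument in (i)-forward is correct: an up step preceded by a down step opens a block, the excursion $S$ ends with a down step so no block is cut, and swapping keeps every block intact. The genuine gap is the converse half of (i), where you assert that ``any $m$-Dyck path can be decreased by reverse covers inside the $m$-Dyck set down to $(u^md^m)^n$'' without argument. This is not the formal dual of the forward claim. A reverse cover applied to an $m$-Dyck path $P$ is only legitimate (i.e.\ a cover of $\cT_{mn}$ whose bottom element is again $m$-Dyck) when the prime excursion $S$ that gets swapped with the following down step \emph{begins at the start of a block}; you must show that such a configuration exists in every $m$-Dyck path other than $(u^md^m)^n$. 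This requires an actual construction --- for instance, take the least $i$ for which the run of down steps after the $i$-th block has length $a_i<m$, follow the maximal excursion at level $m-a_i$, and pick its last prime excursion, which then both starts at a block and is followed by a down step. Without some such argument, you have not established that the $m$-Dyck paths fill the whole principal filter, and then (ii) and the surjectivity part of (iii) are not yet justified.

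A cleaner route avoids the reverse-cover claim altogether. Prove first what you call the forward half of (iii): the inflation map $\Phi$ sends every $m$-Tamari cover to a Tamari cover in $\cT_{mn}$ (one must also note that the shortest Dyck factor in $\Phi(P)$ coincides with $\Phi$ of the shortest $m$-ballot factor; this follows because, in an $m$-Dyck path, the shortest Dyck factor after a down step is automatically a union of blocks). This makes $\Phi$ order-preserving, and since $\Phi$ maps the minimum of $\cT_n^{(m)}$ to $(u^md^m)^n$, every $m$-Dyck path (being in the image of $\Phi$) lies above $(u^md^m)^n$. Combined with your correct (i)-forward this gives (i), from which (ii) follows because a \emph{principal} filter is closed under both meets and joins, and the backward half of (iii) is exactly your block-preservation analysis.
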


We now describe a recursive construction 
 of (unlabelled) Tamari intervals,
again borrowed from~\cite{bousquet-fusy-preville}.
Thanks to the embedding of $\cTn^{(m)}$ into
$\mathcal{T}_{nm}$, it will also enable us to describe recursively
 $m$-Tamari intervals, for any value of $m$, in the next subsection.

A Tamari interval $I=[P,Q]$ is \emph{pointed} if its lower path $P$ has a
distinguished contact.
Such a contact splits $P$ into two Dyck paths
$P^\ell$ and $ P^r$, respectively located to the left and to the right
of the contact.
 The pointed interval $I$ is \emph{proper} 
if $P^\ell$ is not empty,
\emm i.e.,, if the distinguished contact is not $(0,0)$.
We often use the notation $I=[P^\ell P^r, Q]$ to denote a pointed
Tamari interval. 
The contact $(0,0)$ is called the \emm initial, contact.
\begin{Proposition}\label{prop:decomp}
Let $I_1=[P_1^\ell P_1^r,Q_1]$  be a pointed Tamari interval, and let
$I_2=[P_2,Q_2]$ be a Tamari interval.
Construct the  Dyck paths 
$$
P=uP_1^\ell dP_1^rP_2 \quad \mbox{ and } \quad    Q=uQ_1dQ_2 
$$
as shown in Figure~{\rm\ref{fig:concatenation}}.
Then $I=[P,Q]$ is a Tamari
interval. Moreover, the mapping $(I_1,I_2)\mapsto I$ is a bijection between
pairs $(I_1,I_2)$ formed of a pointed Tamari interval 
 and a Tamari
interval,
and Tamari intervals $I$ of positive size.
Note that $I_1$ is proper if and only if the 
 first ascent of $P$ has height larger than $1$.
\end{Proposition}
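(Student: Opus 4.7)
I would establish the bijection by exhibiting an explicit inverse, verifying that the forward map lands in the set of Tamari intervals, and reading the ascent property off the shape of $P$. The essential tool is a pair of well-known compatibilities of the Tamari order on Dyck paths, already established in~\cite{bousquet-fusy-preville}: \emph{(concatenation compatibility)} if $P \leq Q$ and $Q = AB$ with $A, B$ Dyck paths, then $P$ factors uniquely as $P = A'B'$ with $|A'|=|A|$, $|B'|=|B|$, $A' \leq A$, $B' \leq B$, and conversely concatenation of intervals is an interval; \emph{(arch lifting)} for Dyck paths $R, R'$ of the same size, $R \leq R'$ iff $uRd \leq uR'd$.

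For the forward direction, given $I_1 = [P_1^\ell P_1^r, Q_1]$ and $I_2 = [P_2, Q_2]$, the words $P = u P_1^\ell d P_1^r P_2$ and $Q = u Q_1 d Q_2$ are manifestly Dyck paths of the same size. Writing $P_1 = P_1^\ell P_1^r$, the two compatibilities give $u P_1 d \leq u Q_1 d$ and then $(u P_1 d) P_2 \leq Q$. To conclude $P \leq Q$ it remains to compare $P = (u P_1^\ell d) P_1^r P_2$ with $(u P_1 d) P_2$: this is a short direct check using the standard ``primitive subpath'' statistic $b_i$ characterizing the Tamari order, since splitting the arch $u P_1 d$ at the internal contact separating $P_1^\ell$ from $P_1^r$ only decreases $b_1$ (from $|P_1|$ to $|P_1^\ell|$) and leaves every other $b_i$ unchanged.

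For the inverse direction, start from $I = [P, Q]$ of positive size. The first return of $Q$ to level $0$ yields the unique decomposition $Q = u Q_1 d Q_2$ with $Q_1, Q_2$ Dyck paths. Concatenation compatibility then forces a matching factorization $P = X_1 X_2$ with $|X_1| = |Q_1|+1$, $|X_2| = |Q_2|$, $X_1 \leq u Q_1 d$ and $X_2 \leq Q_2$. Decomposing $X_1$ at \emph{its own} first return to $0$ gives $X_1 = (u P_1^\ell d) P_1^r$, with $P_1^\ell$ (possibly empty) the interior of the first arch of $P$ and $P_1^r$ a Dyck path. The primitive-subpath computation used in the forward direction, read in reverse, shows that $X_1 \leq u Q_1 d$ forces $P_1 := P_1^\ell P_1^r \leq Q_1$. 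Setting $P_2 := X_2$ and pointing $P_1$ at the contact junction of $P_1^\ell$ and $P_1^r$ yields a pair $(I_1, I_2)$ whose image under the forward map is visibly the original $I$, and the two maps are mutually inverse.

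Finally, the first ascent of $P = u P_1^\ell d P_1^r P_2$ has height strictly greater than $1$ iff $P_1^\ell$ is non-empty, which is exactly the definition of $I_1$ being proper. The only moderately technical step in the argument is the primitive-subpath comparison used twice above to relate $u P_1^\ell d P_1^r$ to $u P_1 d$: it is elementary but the bookkeeping must be handled carefully, and this is precisely where I would draw on the detailed analysis of~\cite{bousquet-fusy-preville}.
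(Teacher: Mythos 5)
Your proposal takes a genuinely different route from the paper. The paper does not give a direct proof of this proposition: the second remark following the statement reduces it to Proposition~5 and Lemma~9 of~\cite{bousquet-fusy-preville}. You instead assemble a self-contained argument from three facts about the Tamari order on Dyck paths: arch lifting, concatenation compatibility, and a componentwise characterization (your ``primitive subpath statistic $b_i$''). Your decomposition steps (first return of $Q$, the matching factorization of $P$, first return of $X_1$, and the placement of the pointing) are the right ones and do invert the forward map, and the closing observation relating properness of $I_1$ to the height of the first ascent of $P$ is correct.

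Two points. For the inequality $uP_1^\ell d P_1^r P_2 \le (uP_1 d)P_2$, the componentwise characterization is unnecessary: writing $P_1^r$ as a concatenation of arches, the larger word is reached from the smaller by a chain of covering moves that repeatedly push the $d$ closing the initial arch rightward past each arch of $P_1^r$, directly from Definition~\ref{def-m-tamari}. By contrast, the componentwise characterization \emph{is} what you need to deduce $P_1\le Q_1$ from $X_1\le uQ_1 d$, and this is precisely the point you leave imprecise and defer to~\cite{bousquet-fusy-preville}. To make the proof truly self-contained you should state the characterization exactly (to each up step of a Dyck word attach the half-length of the shortest Dyck factor beginning at that step; then $P\le Q$ if and only if the resulting vector for $P$ is coordinatewise $\le$ that for $Q$), justify or cite it, and verify that in the comparison of $uP_1^\ell d P_1^r$ with $uQ_1 d$ the coordinate at the initial up step is automatically dominated while the remaining coordinates are exactly those of $P_1$ and of $Q_1$, shifted by one. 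Once that is spelled out, your argument stands on its own, which the paper's does not.
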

\begin{figure}
\begin{center}
\input{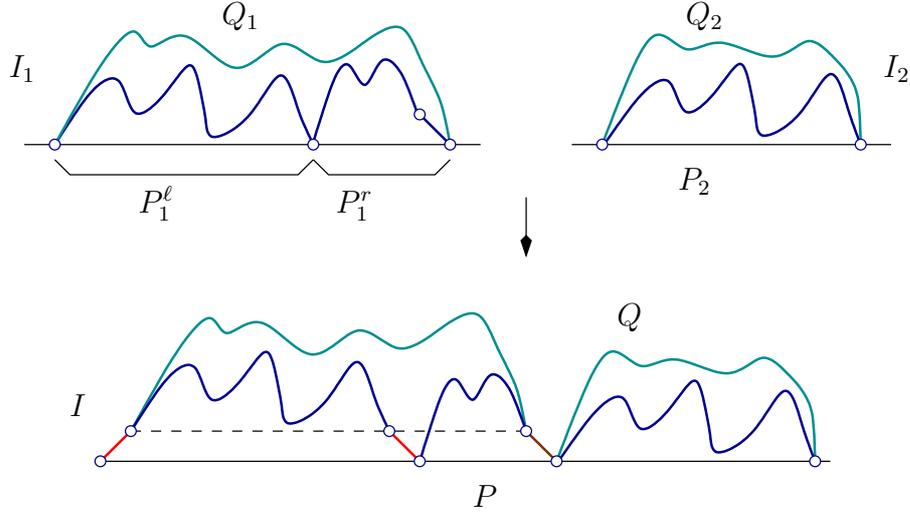}
\end{center}
\caption{The recursive construction of Tamari intervals.}
\label{fig:concatenation}
\end{figure}
\noindent{\bf Remarks}\\
\noindent 1. To recover $P_1^\ell$, $ P_1^r$, $Q_1$, $P_2$ and $Q_2$
from $P$ and $Q$, one proceeds as follows: $Q_2$ is the part of $Q$
that follows the first return of $Q$ to the $x$-axis; this also defines
$Q_1$ unambiguously. The path $P_2$ is the
suffix of $P$ having the same size as $Q_2$.  This also defines $P_1:=uP_1^\ell dP_1^r$
unambiguously. Finally, $P_1^r$ is the part of $P_1$
that follows the first return of $P_1$ to the $x$-axis, and this also
defines $P_1^\ell$ unambiguously.
\\
\noindent 2. This proposition is obtained  by combining Proposition~5
in~\cite{bousquet-fusy-preville} and
the case $m=1$ of Lemma~9 in~\cite{bousquet-fusy-preville}. With the
notation $(P';p_1)$ and $(Q',q_1)$ used 
therein, the above paths $P_2$
and $Q_2$ are respectively the parts of $P'$ and 
$Q'$ that lie to the right of $q_1$, while $P_1^\ell  P_1^r$ and
$Q_1$ are the parts of $P'$ and 
$Q'$ that lie to the left of $q_1$. The pointed vertex $p_1$  is the
endpoint of $P_1^\ell$. Proposition~5
in~\cite{bousquet-fusy-preville} guarantees that, if $P\preceq Q$ in
the Tamari order, then $P_1^\ell  P_1^r \preceq Q_1$ and $P_2 \preceq Q_2$.
\\
\noindent 3. One can keep track of several parameters in the
construction of Proposition~\ref{prop:decomp}. For instance,
the number of non-initial 
contacts of $P$ is 
\beq\label{eq:contacts}
c(P)-1=\left(c(P_1^r )-1\right)+c(P_2).
\eeq

\subsection{From the construction to the functional equation}

We now prove  Proposition~\ref{prop:eq} through a sequence of lemmas.
The first one describes $F^{(m)}(t,p;x,y)$ in terms of homogeneous
symmetric functions rather than power sums.
\begin{Lemma}\label{lem:ordinary}
  Let $\tilde h_k(y)$ be defined by~\eqref{htdef}, and set 
$$
h_k=\tilde h_k(1)=\sum\limits_{\lambda\vdash
  k}\frac{p_\lambda}{z_\lambda}.
$$
Hence $h_k$ is the $k^{\hbox{\small th}}$ homogenous symmetric function if
$p_k$ is the $k^{\hbox{\small th}}$ power sum. Then the refined Frobenius
series $F^{(m)}(t,p;x,y)$, defined by~\eqref{F-def}, can also be
written as the following \emm ordinary, \gf:
\beq\label{F:unlabelled}
F^{(m)}(t,p;x,y)= \sum_{I=[P,Q], \mbox{\small{ \emm unlabelled,}}} t^{|I|}x^{c(P)}\tilde
h_{a_1}(y) \prod_{i\ge 2} h_{a_i},
\eeq
where the sum runs over \emm unlabelled, $m$-Tamari intervals $I$, and
$a_i$ is the height of the $i^{\hbox{\small th}}$ ascent of the upper path $Q$.
In particular, $F^{(m)}(t,p;x,1)\equiv F^{(m)}(x,1)$ is the ordinary
\gf\ of $m$-Tamari intervals, counted by the size ($t$), the number of
contacts ($x$), and the distribution of ascents ($h_i$ for each
ascent of height $i$ in the upper path).
\end{Lemma}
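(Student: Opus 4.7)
The plan is to recognize the passage from labelled to unlabelled intervals as a classical group-theoretic repackaging and then use the identity $\sum_{\lambda\vdash k}p_\lambda/z_\lambda=h_k$ ascent by ascent. Concretely, I would group the labelled intervals in~\eqref{F-def} according to their underlying unlabelled interval $[P,Q]$. If $Q$ has ascents of heights $a_1,a_2,\dots,a_r$ with $a_1+\cdots+a_r=n=|I|$, then a labelling of $Q$ is determined by an ordered set partition $(B_1,\dots,B_r)$ of $\{1,\dots,n\}$ with $|B_i|=a_i$ (labels are then forced to be increasing inside each ascent). The number of such labellings is the multinomial coefficient $\binom{n}{a_1,\dots,a_r}=n!/(a_1!\cdots a_r!)$.

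Next, I would use the description of the stabiliser recalled just before~\eqref{F-def}: a permutation $\sigma$ fixes the labelled interval $[P,Q]$ if and only if each cycle of $\sigma$ is contained in some $B_i$. Equivalently, $\sigma$ splits uniquely as a product $\sigma_1\sigma_2\cdots\sigma_r$ with $\sigma_i\in\mathfrak{S}(B_i)$, and $a_\sigma(Q)=\ell(\lambda(\sigma_1))$ is the number of cycles of~$\sigma_1$. Moreover $p_{\lambda(\sigma)}=\prod_i p_{\lambda(\sigma_i)}$. Hence, for a fixed labelling,
\begin{equation*}
\sum_{\sigma\in\mathrm{Stab}([P,Q])} y^{a_\sigma(Q)}p_{\lambda(\sigma)}
=\Bigl(\sum_{\sigma_1\in\mathfrak{S}(B_1)}y^{\ell(\lambda(\sigma_1))}p_{\lambda(\sigma_1)}\Bigr)\prod_{i=2}^{r}\Bigl(\sum_{\sigma_i\in\mathfrak{S}(B_i)}p_{\lambda(\sigma_i)}\Bigr).
\end{equation*}
For a set $B$ of size $k$, the classical identity $\#\{\sigma\in\mathfrak{S}_k:\lambda(\sigma)=\lambda\}=k!/z_\lambda$ (used already in Section~\ref{sec:frobenius}) gives $\sum_{\sigma\in\mathfrak{S}(B)}y^{\ell(\lambda(\sigma))}p_{\lambda(\sigma)}=k!\,\tilde h_k(y)$, and similarly $\sum_{\sigma\in\mathfrak{S}(B)}p_{\lambda(\sigma)}=k!\,h_k$ for the remaining factors. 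The inner double sum thus equals $a_1!\,\tilde h_{a_1}(y)\prod_{i\ge 2}a_i!\,h_{a_i}$, independently of the chosen labelling.

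Finally, summing over labellings cancels the multinomial denominator and the factor $1/|I|!$ in~\eqref{F-def}:
\begin{equation*}
\frac{1}{n!}\binom{n}{a_1,\dots,a_r}\,a_1!\,\tilde h_{a_1}(y)\prod_{i\ge 2}a_i!\,h_{a_i}
=\tilde h_{a_1}(y)\prod_{i\ge 2}h_{a_i},
\end{equation*}
which is exactly the weight of $[P,Q]$ in~\eqref{F:unlabelled}. Specialising $y=1$ turns $\tilde h_{a_1}$ into $h_{a_1}$, and the resulting ordinary generating function records one factor $h_{a_i}$ per ascent of height $a_i$ in $Q$, as claimed. I do not anticipate a genuine obstacle here: the only nontrivial ingredient is the factorisation of stabilisers along ascents, and everything else is classical symmetric-function bookkeeping; the care needed is just to treat the first ascent separately so that the variable $y$ tracks $a_\sigma(Q)$ correctly.
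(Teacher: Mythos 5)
Your proof is correct and takes essentially the same approach as the paper: both group labelled intervals by their underlying unlabelled interval, identify pairs (labelling, fixing permutation) with a permutation whose cycles are distributed into blocks indexed by ascents, and then use the identity $\sum_{\sigma\in\mathfrak{S}_k}y^{\ell(\lambda(\sigma))}p_{\lambda(\sigma)}=k!\,\tilde h_k(y)$ ascent by ascent. The paper phrases the factorisation over ascents via the product formula for exponential generating functions of ``$I$-partitioned permutations'', whereas you carry out the same bookkeeping directly with the multinomial coefficient; the content is identical.
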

\begin{proof}
 Let $I=[P,Q]$ be an unlabelled Tamari interval, and let $a_i$ be the
 height of the $i^{\hbox{\small th}}$ ascent of  $Q$. Denote
 $n=|I|$. An \emm $I$-partitioned permutation, is a permutation
 $\sigma \in \Sn_n$, together with a partition of the set of cycles of
 $\sigma$ into labelled subsets $A_1, A_2, \ldots$, such that the sum of
 the lengths of the cycles of $A_i$ is $a_i$. In the
 expression~\eqref{F-def} of $F^{(m)}$, the contribution of labelled intervals $\bar I=[P, \bar Q]$
 obtained by labelling $Q$ in all possible ways is
$x^{c(P)} \phi(I)$, where
$$
\phi(I)= \frac{t^{|I|} }{|I|!}  
 \sum_{\bar I=[P, \bar Q]} \sum_{\sigma \in \mathrm{Stab}(\bar I)} 
 \,y^{a_\sigma(\bar Q)} p_{\lambda(\sigma)}.
$$
In other words, $\phi(I)$ is the exponential \gf\ of $I$-partitioned permutations, counted by
the size (variable $t$), the number of cycles in the block $A_1$
(variable $y$), and
the number of cycles of length $j$ (variable $p_j$), for all $j$. By
elementary results on exponential \gfs, this series factors over
ascents of $Q$. The contribution of the $i^{\hbox{\small th}}$ ascent is the
exponential \gf\ of \ps\ of $\Sn_{a_i}$, counted by the size, the
number of cycles of length $j$ for all $j$, and also by the number of cycles
if $i=1$. But this is exactly $t^{a_i}h_{a_i}$ (or $t^{a_1}\tilde h_{a_1}(y)$ if
$i=1$), since
$$
t^{a}\tilde h_a(y)= {t^{a} }\sum\limits_{\lambda\vdash
  a}\frac{p_\lambda}{z_\lambda}\, y^{\ell(\lambda)}
            = \frac{t^{a} }{a!}\sum\limits_{\si \in
              \Sn_a}{p_\lambda(\si)}\, y^{\ell(\lambda(\si))}.
$$
Hence
$$
\phi(I)= t^{|I|} \tilde h_{a_1}(y) \prod_{i\ge 2} h_{a_i},
$$
and the proof is complete\footnote{
  An analogous result was used without proof in the study
  of the parking representation of the symmetric group~\cite[p.~28]{HaiConj}.}.
\end{proof}

\begin{Lemma}\label{lemma:H}
In the expression~\eqref{F:unlabelled} of $F^{(m)}(t,p;x,y)\equiv F(x,y)$, the contribution of
intervals $I=[P,Q]$ such that the first ascent of $Q$ has height $k$ is
$$
\tilde h_k(y) \left(tx(F(x,1)\Delta)^{(m)}\right)^{(k)} (x).
$$
This proves the first equation satisfied by $F^{(m)}(x,y)$ in Proposition~\rm{\ref{prop:eq}}.
\end{Lemma}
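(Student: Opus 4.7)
The plan is to derive the recursion
\[
\tilde G_k(x) = tx\,\bigl(F(x,1)\,\Delta\bigr)^{(m)}\,\tilde G_{k-1}(x), \qquad \tilde G_0(x) = x,
\]
for the ``stripped'' generating function
\[
\tilde G_k(x) := \sum_{I=[P,Q]:\, a_1(Q)=k} x^{c(P)}\, t^{|I|}\, \prod_{i\ge 2} h_{a_i(Q)}
\]
(with the convention $a_1(Q)=0$ when $Q$ is empty). Iterating the recursion yields $\tilde G_k(x) = (tx(F(x,1)\Delta)^{(m)})^{(k)}(x)$; combined with the ordinary-generating-function expression of Lemma~\ref{lem:ordinary}, which gives that the contribution of these intervals to $F(x,y)$ equals $\tilde h_k(y)\,\tilde G_k(x)$, this is the desired identity. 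The base case $\tilde G_0(x)=x$ is immediate: the only interval with $a_1(Q)=0$ is the empty interval, which has one contact.

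For the inductive step in the case $m=1$, I would apply Proposition~\ref{prop:decomp} to express each interval with $a_1(Q)=k\ge 1$ as a pair (pointed $I_1$ with $a_1(Q_1)=k-1$, Tamari interval $I_2$). The contact weight splits via $c(P)=c(P_1^r)+c(P_2)$; summing over the pointings of $P_1$ and using the identity
\[
\sum_{j=1}^{c(P_1)} x^{c(P_1)-j+1} \;=\; x\,\Delta\bigl(x^{c(P_1)}\bigr)
\]
turns the pointed sum into the operator $x\Delta$ acting on $\tilde G_{k-1}(x)$; the added $ud$-arch contributes $t$ (one north step) together with $x$ (one new contact at its base); and the sum over $I_2$ yields $F(x,1)$. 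The ascents split cleanly as $a_1(Q)=1+a_1(Q_1)$ and $\{a_i(Q)\}_{i\ge 2}=\{a_i(Q_1)\}_{i\ge 2}\cup \{a_j(Q_2)\}_j$, closing the $m=1$ version of the recursion.

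For general $m$, I would use Proposition~\ref{prop:sublattice} to view $m$-Tamari intervals inside $\cT_{mn}$: peeling one north step from the first ascent of $Q$ (in the $m$-ballot sense) then amounts to peeling the first block $u^m$ of $Q$, which is performed by $m$ successive applications of Proposition~\ref{prop:decomp}. Each of these $m$ applications contributes one sub-interval (weight $F(x,1)$) and one pointing (operator $\Delta$), together producing the inner operator $(F(x,1)\Delta)^{(m)}$; the outer $tx$ still records the single new north step and a single new contact at the base of the block. The main obstacle is to check that the $m$ sub-intervals produced by this iterated 1-Tamari decomposition, as well as the residual inner interval with first ascent $k-1$, are genuine $m$-Tamari intervals, that is, the $m$-Dyck block structure is preserved across the $m$ iterations. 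This is what legitimates the appearance of the $m$-Tamari series $F(x,1)$ in the operator, and requires a careful combination of Propositions~\ref{prop:sublattice} and~\ref{prop:decomp}, effectively an $m$-Tamari analogue of the latter.
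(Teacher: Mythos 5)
Your strategy is the same as the paper's: embed $m$-Tamari intervals in $\cT_{mn}$ via Proposition~\ref{prop:sublattice}, peel the first block by iterating the $1$-Tamari decomposition of Proposition~\ref{prop:decomp} $m$ times, and read off one factor $F(x,1)\Delta$ per peel with a single outer $tx$ for the block and base contact. Your $m=1$ calculation, including the pointing identity $\sum_{j=1}^{c}x^{c-j+1}=x\Delta(x^{c})$, is correct. However, you have located the hard step without actually filling it, and there is a second subtlety you do not mention.

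The device the paper introduces to make your ``$m$ successive applications'' rigorous is the notion of an $\ell$-\emph{augmented} $m$-Dyck path: a Dyck path whose first $\ell$ steps are up steps not belonging to any block, all remaining up steps being grouped into blocks of size $m$. After $j<m$ peels, the residual $I_1$ is not an $m$-Tamari interval but an $(m-j)$-augmented one, so one must introduce intermediate generating functions $F_{k,\ell}(x)$ indexed by the augmentation level and run a double (lexicographic) induction on $(k,\ell)$. Your sketch has no name for these intermediate objects, which is precisely what ``requires a careful combination of Propositions~\ref{prop:sublattice} and~\ref{prop:decomp}'' means; the paper does not construct an ``$m$-Tamari analogue'' of Proposition~\ref{prop:decomp}, it tracks $\ell$ instead.

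The second gap is that the proper/non-proper distinction of pointings depends on $\ell$. When $\ell>1$ the non-proper pointing (at the origin) is forbidden: the reconstructed path $uP_1^{\ell}dP_1^{r}P_2$ would begin with $ud$, contradicting that an $\ell$-augmented path must start with $\ell\geq 2$ up steps. Thus for $\ell>1$ the pointing sum is $\Delta F_{k,\ell-1}(x)$ (proper only), and only the last peel ($\ell=1\to 0$) includes the non-proper term, producing $\Delta\big(xF_{k,0}(x)\big)$. This is what places exactly one factor of $x$ inside $tx(F(x,1)\Delta)^{(m)}$. Your $m=1$ pointing formula $x\Delta(x^{c})$ silently includes the non-proper case; applied uncritically at every peel for $m\geq 2$ it would produce an $x$ per peel rather than one, yielding the wrong operator. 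Without tracking $\ell$ you cannot justify where the single $x$ goes, and the clean statement ``each application contributes $F(x,1)\Delta$, the outer $tx$ records the rest'' cannot be proved.
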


\begin{proof}
We constantly use  in this proof the inclusion $\mathcal{T}_n^{(m)}\subset
\mathcal{T}_{nm}$ given by Proposition~\ref{prop:sublattice}. That is, we
identify elements of $\mathcal{T}_n^{(m)}$ with $m$-Dyck paths having
$n$ blocks. 
The size of an interval is thus now the number of blocks, and the
height of the first ascent becomes the number of blocks in the first
ascent.

Lemma~\ref{lemma:H} relies on the recursive description of
Tamari intervals given in Proposition~\ref{prop:decomp}. 
We  actually apply this construction  to a slight
 generalization of $m$-Tamari intervals.
For $\ell \geq 0$, an \emph{$\ell $-augmented $m$-Dyck path} is a Dyck
path $Q$ of 
size $\ell +mn$ for some integer $n$, where the first $\ell $ steps
are up steps, and  all the other up steps can be partitioned into
\emph{blocks} of $m$  consecutive up steps. The  $\ell $ first steps
of $Q$
are not considered to be part of a block,   even if $\ell $ is a
multiple of $m$. 
We denote by
$a(Q)$  the number of blocks contained  in the first 
ascent\footnote{Since the number of blocks does not depend on $Q$
  only, but also on $\ell$, it should  in principle be denoted
  $a^{(\ell)}(Q)$. We hope that our choice of a lighter notation will
  not cause any confusion.}
   of $Q$.
A Tamari interval $I=[P,Q]$ is an \emph{$\ell $-augmented $m$-Tamari
  interval} if both $P$ and $Q$ are $\ell $-augmented $m$-Dyck paths.

\smallskip

For $k,\ell \geq0$ let $F_{k,\ell}(x)^{(m)}\equiv F_{k,\ell}(x)$ be the generating 
function of $\ell $-augmented $m$-Tamari intervals $I=[P,Q]$ such that
$a(Q)=k$, counted by the
number of blocks (variable $t$), the number of \emph{non-initial} contacts
(variable $x$) and the
number of non-initial ascents of $Q$ 
having $i$ blocks (one variable $h_i$ for each
$i\geq 1$, as before).
We are going to prove that for all $k,\ell \geq 0$, 
\beq\label{eq:Hk}
F_{k,\ell }(x)=
\left\{
\begin{array}{rl}
  \frac{1}{x}\left(tx(F(x,1)\Delta)^{(m)}\right)^{(k)} (x) &\mbox{ if } \ell =0,
  \\
  (F(x,1)\Delta)^{(\ell) }\left(tx(F(x,1)\Delta)^{(m)}\right)^{(k)} (x)& \mbox{ if } \ell >0.
\end{array}  \right.
\eeq
We claim  that \eqref{eq:Hk} implies Lemma~\ref{lemma:H}. Indeed,
$m$-Tamari intervals coincide with $0$-augmented
$m$-Tamari intervals. Since the initial
contact and the 
first ascent
 are not counted in $F_{k,0}(x)$, but are
counted in $F^{(m)}(x,y)$, the
contribution in $F^{(m)}(x,y)$ of intervals such that 
$a(Q)=k$  is $x \tilde h_k(y) F_{k,0}(x)$, 
as stated in the lemma.

\medskip
We now prove \eqref{eq:Hk}, by lexicographic induction on $(k,\ell )$. 
For $(k,\ell )=(0,0)$, the unique interval involved in $F_{k,\ell}(x)$ 
is $\{\bullet\}$, where $\bullet$ is the path
of length $0$. Its contribution is $1$ (since the initial and only contact is not
counted). Therefore $F_{0,0}(x)=1$ and~\eqref{eq:Hk} holds.
Let $(k,\ell )\neq (0,0)$ and assume 
that~\eqref{eq:Hk} holds for all
lexicographically smaller pairs  $(k',\ell ') < (k,\ell )$. We are
going  to show that~\eqref{eq:Hk} holds at rank $(k,\ell )$.

If $k>0$ and $\ell =0$, then we are considering $0$-augmented $m$-Tamari intervals, that is,
usual $m$-Tamari intervals. But an $m$-Tamari interval $I=[P,Q]$
having $n$ blocks and  $k$ blocks in the first ascent can be seen as an
$m$-augmented $m$-Tamari interval having  $n-1$ blocks and  $k-1$
blocks in the first ascent, by considering that
the first $m$ up steps of $P$ and $Q$ are not part of a block. This
implies that: 
$$
F_{k, 0} (x) = t F_{k-1,m}(x)= \frac{1}{x}\left(tx(F(x,1)\Delta)^{(m)}\right)^{(k)} (x) 
$$
by the induction hypothesis~\eqref{eq:Hk} applied at rank $(k-1,m)$. This is exactly~\eqref{eq:Hk} at rank $(k,\ell =0)$.

Now assume  $\ell \neq 0$. The series $F_{k,\ell }(x)$ 
counts $\ell $-augmented
$m$-Tamari intervals $I=[P,Q]$ such that $a(Q)=k$. By
Proposition~\ref{prop:decomp}, such an interval can be  decomposed into a
pointed interval $I_1=[P_1^\ell P_1^r,Q_1]$ and an interval
$I_2=[P_2,Q_2]$ 
(the ``$\ell$'' in the notation $P_1^\ell$ is a bit unfortunate here;
we hope it will not raise any difficulty). 
Note that $I_2$ is an $m$-Tamari interval, while $I_1$ is an $(\ell
-1)$-augmented pointed
$m$-Tamari interval. 
Conversely, starting from such a pair $(I_1, I_2)$, the construction of
Proposition~\ref{prop:decomp} produces an $\ell $-augmented
$m$-Tamari interval, unless $I_1$ is not proper and $\ell >1$.
Moreover, $a(Q_1)=a(Q)$.
Using~\eqref{eq:contacts}, we obtain:
\beq\label{eq:decomp10}
F_{k,\ell }(x) =  
F(x,1)\left(F^\bullet_{k,\ell -1}(x) + \mathbbm{1}_{\ell =1}F^\circ_{k,\ell -1}(x)\right)
\eeq
where $F^\bullet_{k,\ell -1}(x)$
(resp. $F^\circ_{k,\ell -1}(x)$) 
is the generating function of proper (resp. non-proper) pointed
$(\ell -1)$-augmented $m$-Tamari intervals  $I_1=[P_1^\ell P_1^r,Q_1]$ such that
$a(Q_1)=k$, counted by the number of blocks (variable $t$),
the number of non-initial ascents of $Q_1$ of height $i$ (variable
$h_i$) for each $i\geq 1$,
 and the number of non-initial contacts 
of $P_1^r$ (variable $x$). The factor $F(x,1)$ in~\eqref{eq:decomp10}
is the contribution of the interval $I_2$.

To determine the series $F^\bullet_{k,\ell -1}(x)$, expand the series
$F_{k,\ell -1}(x)$ as
$$
F_{k,\ell -1}(x) = \sum_{i\geq1} F_{k,\ell -1,i} x^i,$$
where $F_{k,\ell -1,i}=[x^i]F_{k,\ell -1}(x)$ is the generating function of
$(\ell -1)$-augmented $m$-Tamari intervals $[P_1,Q_1]$ such that $r(Q_1)=k$, and  having $i$
non-initial contacts. Each
such interval
can be pointed in $i$ different
ways to  give rise to $i$ different proper pointed intervals
$[P_1^\ell P_1^r,Q_1]$,
having respectively $0,1,\ldots, i-1$ non-initial contacts. 
Therefore,
\begin{eqnarray}
 F^\bullet_{k,\ell -1}(x) &=& \sum_i F_{k,\ell -1,i}(1+x+\dots+x^{i-1})
 \nonumber\\
&=& \sum_i F_{k,\ell -1,i}\frac{x^i-1}{x-1}\nonumber\\
&=& \frac{1}{x-1} \left(F_{k,\ell -1}(x)-F_{k,\ell -1}(1)\right)\nonumber\\
&=& \Delta F_{k,\ell -1}(x). \label{eq:hkdelta}
\end{eqnarray}
This, together with~\eqref{eq:decomp10}, already allows us to
prove~\eqref{eq:Hk} when $\ell >1$. Indeed, one then has:
$$
F_{k,\ell }(x) = F(x,1)\Delta F_{k,\ell -1}(x) =
(F(x,1)\Delta)^{(\ell) }\left(tx(F(x,1)\Delta)^{(m)}\right)^{(k)} (x),$$
by the induction hypothesis. This is \eqref{eq:Hk} at rank $(k,\ell )$.

It remains to treat the case $\ell =1$. To this end we need to
determine the series 
$F^\circ_{k,0}(x)$. By definition, a pointed interval
$I_1=[P_1^\ell P_1^r,Q_1]$ is non-proper if $P_1^\ell$ is empty, in
which case $I_1$ can 
be identified with the (non-pointed) interval $[P_1^r,Q_1]$.
This implies that
$F^\circ_{k,0}(x)=  F_{k,0}(x)$.
Therefore \eqref{eq:decomp10} and \eqref{eq:hkdelta} give:
\begin{eqnarray*}
  F_{k,1}(x) &=& F(x,1) \left( \Delta F_{k,0}(x) + F_{k,0}(x)\right) \\ 
             &=& F(x,1) \Delta\big(xF_{k,0}(x)\big).
\end{eqnarray*}
 By the induction hypothesis,  $F_{k,0}(x) = \frac{1}{x}
\left(tx(F(x,1)\Delta)^{(m)}\right)^{(k)} (x)$, so that
$$
F_{k,1}(x) = F(x,1)\Delta \left(tx(F(x,1)\Delta)^{(m)}\right)^{(k)}.
$$
We recognise~\eqref{eq:Hk} at rank $(k,\ell =1)$, and this
settles the last case of the induction. 
\end{proof}

\begin{proof}[Proof of Proposition~\rm{\ref{prop:eq}}]
  By Lemmas~\ref{lem:ordinary} and \ref{lemma:H}, and the
  definition~\eqref{htdef} of $\tilde   h_k(y)$,  we have:
  \begin{eqnarray*}
  F(x,y)&=& 
\sum_{k\geq 0}\tilde h_k(y) \left( tx
  (F(x,1)\Delta)^{(m)}\right)^{(k)}(x)=
  \sum_{\lambda} y^{\ell(\lambda)}\frac{p_\lambda}{z_\lambda} \left( tx
  (F(x,1)\Delta)^{(m)}\right)^{(|\lambda|)}(x).
\end{eqnarray*}
Letting $\alpha_i$ be the number of parts equal to $i$ in the partition
 $\lambda$, and summing on the $\alpha_i$'s rather than on $\lambda$, we can
 rewrite this sum as:
 \begin{eqnarray*}
  F(x,y)&=& \sum_{\alpha_1,\alpha_2,\dots}
  \prod_i \left(  \frac{y^{\alpha_i}}{\alpha_i!}\left(\frac{p_i}{i}\right)^{\alpha_i} \left( tx
  (F(x,1)\Delta)^{(m)}\right)^{(i\alpha_i)}\right)(x)\\
  &=&\prod_{i\ge 1} \exp\left(y \,\frac{p_i}{i} \left( tx
  (F(x,1)\Delta)^{(m)}\right)^{(i)}\right) (x)\\
  &=&
  \exp\left(y \sum_i \frac{p_i}{i} \left( tx
  (F(x,1)\Delta)^{(m)}\right)^{(i)}\right) (x).
\end{eqnarray*} 
 We have used the fact that the operator $\Delta$ commutes with the
 multiplication by $y$ and by $p_i$.
This is the second functional equation satisfied by $F(x,y)$ given in
Proposition~\ref{prop:eq}. The third one, \eqref{eq:Fb}, follows by
differentiating with respect to $y$.
\end{proof}

\section{Principle of the proof, and  the case $m=1$}
\label{sec:sol1}
\subsection{Principle  of the proof}
\label{sec:principle}
Let us consider the functional equation~\eqref{eq:Fb}, together with
the initial condition  $F(t,p;x,0)=x$. Perform the change of
variables~\eqref{t-x-param}, and denote 
$G(z,p;u,y)\equiv G(u,y)= F(t,p;x,y)$. Then $G(u,y)$ is a series in $z$
with coefficients in $\GK[u,y]$ (where $\GK=\qs(p_1, p_2, \ldots)$) satisfying 
\beq\label{eq:G}
\frac{\partial G}{\partial y} (u,y)=
\sum_{k\geq 1} \frac{p_k}{k}
\left(z(1+u)e^{-m(K(u)+L)}\left(\frac{uG(u,1)}{(1+u)e^{-mK(u)}-1} 
\ \Delta_u\right)^{(m)} \right)^{(k)}G(u,y),  
\eeq
with
$
\Delta_u H(u)= \frac {H(u)-H(0)}{u},
$
and the initial condition 
\beq\label{init-G}
G(u,0)=(1+u)e^{-mK(u)}.
\eeq
Observe that this pair of equations defines $G(u,y)\equiv G(z,p;u,y)$ uniquely as a
\fps\ in $z$. Indeed, the coefficient of $z^n$ in $G$
can be computed inductively from these equations: one first determines
the coefficient of $z^n$ in $\frac{\partial G}{\partial y}$,
which can be expressed, thanks to~\eqref{eq:G}, in terms of the
coefficients of $z^i$ in  $G$ for $i<n$; then the coefficient of $z
^n$ in $G$ is obtained by integration with respect to $y$, using the
initial condition~\eqref{init-G}.
Hence, if we exhibit
a series $\tilde G(z,p;u,y)$ that satisfies both equations, then
 $\tilde G(z,p;u,y)=G(z,p;u,y)$. We are going to construct such a series.

Let 
\beq\label{tG1}
G_1(z,p;u)\equiv G_1(u)=
({1+\bu}) e^{K(u)+L}\left((1+u)e^{-mK(u)}-1 \right).
\eeq
Then $G_1(u)$ is a series in $z$ with coefficients in $\GK[u]$,
which, as we will see, coincides with $G(u,1)$. Consider now the
following  equation, obtained from~\eqref{eq:G} by replacing
$G(u,1)$ by its conjectured value $G_1(u)$:
\begin{eqnarray}
\frac{\partial \tilde G}{\partial y} (z,p;u,y)&= &
\sum_{k\geq 1} \frac{p_k}{k}
\left(z(1+u)e^{-m(L+K(u))}\left(\frac{uG_1(u)}{(1+u)e^{-mK(u)}-1} 
\ \Delta_u\right)^{(m)} \right)^{(k)} \tG(z,p;u,y) \nonumber
\\
&=&\sum_{k\geq 1} 
\frac{p_k}{k}
\left(z(1+u)e^{-m(L+K(u))}\left(({1+u}) e^{K(u)+L}
\ \Delta_u\right)^{(m)} \right)^{(k)} \tG(z,p;u,y),  \label{eq:Gtilde0}
\end{eqnarray}
with the initial condition 
\beq\label{init:Gtilde}
\tilde G(z,p;u,0)=(1+u)e^{-mK(u)}.
\eeq 
 Eq.~\eqref{eq:Gtilde0}  can be rewritten as 
\beq\label{eq:Gtilde}
\frac{\partial \tilde G}{\partial y} (z,p;u,y)= 
\sum_{k\geq 1} \frac{p_k}{k} \left(zvA(u)^m \Lambda^{(m)}\right)^{(k)}
\tG(z,p;u,y),
\eeq
where 
 \beq\label{A-def}
A(u)=\frac{u}{1+u}e^{-K(u)},
 \eeq
$\Lambda$ is the  operator defined by
\beq\label{Lambda-def}
\Lambda (H) (u) = 
\frac{H(u)-H(0)}{A(u)} ,
\eeq
and  $v={(1+u)^{m+1}}{u^{-m}}$ as before.
Again, it is not hard
to see that~\eqref{eq:Gtilde} and the initial
condition~\eqref{init:Gtilde} define a unique 
series in $z$, denoted $\tG(z,p;u,y)\equiv \tG(u,y)$. The coefficients
of this series lie 
in $\GK[u,y]$.
%
The principle of our proof can be described as follows.

\begin{quote}
{\it  If we prove that $\tG(u,1)=G_1(u)$, then the
  equation~{\rm\eqref{eq:Gtilde0}} satisfied by $\tG$ coincides with the
  equation~\eqref{eq:G} that defines $G$, and thus
  $\tG(u,y)=G(u,y)$. In particular,
  $G_1(z,p;u)=\tG(z,p;u,1)=G(z,p;u,1)=F(t,p;x,1)$, and 
  Theorem~{\rm\ref{thm:main}} is proved. }
\end{quote}

\medskip

\noindent{\bf Remark.} Our proof relies on the fact that we have been
able to \emm guess,  the value of $G(u,1)$, given by~\eqref{tG1}. This
was a difficult task, which we discuss in greater details in
Section~\ref{sec:final-guess}. 

\subsection{The case $m=1$}
\label{subsec:sol1}
Take $m=1$. In this subsection, we describe the three steps that, starting
from~\eqref{eq:Gtilde},  prove that
$\tG(u,1)=G_1(u)$. In passing, we establish the expression~\eqref{F-param-y1} of
$F(t,p;x,1)$ (equivalently, of $\tG(z,p;u,1)$) given in  
Theorem~\ref{thm:1}. 
The case of general $m$ is difficult, and we hope
that studying in details the case $m=1$ will make the ideas of the
proof more transparent. Should this specialization not suffice, we
invite the reader to set further $p_i={\mathbbm 1}_{i=1}$, 
in which
case we are simply counting labelled Tamari intervals (see
also~\cite{mbm-chapuy-preville}). 
%

\subsubsection{A homogeneous differential equation and its solution}
\label{sec:m11}
When $m=1$, the equation~\eqref{eq:Gtilde}  defining $\tG(z;u,y)\equiv
\tG(u,y)$ reads 
\beq\label{ED-uy}
\frac{\partial \tG}{\partial y} (u,y)= \sum_{k\ge 1} \frac{p_k}{k} z^k
\Big((1+u)(1+\bu) \, \Omega \Big)^{(k)}
\tG(u,y),
\eeq
where $\bu=1/u$ and the operator $\Omega $ is defined by $\Omega  H (u) = H(u)-H(0)$,
with the initial condition 
\beq\label{init-1}
\tG(u,0)=({1+u})e^{-K(u)}.
\eeq
These equations imply  that $\tG(-1,y)=0$. 

Observe that the differential equation~\eqref{ED-uy} is not
homogeneous: the term obtained for $k=1$
involves the (unknown) series $\tG(0,y)$, and
more and more unknown series independent of $u$ occur as $k$ grows. By
exploiting the  symmetry of the term $(1+u)(1+\bu)$,
we are going to obtain an equation that does not involve these
series. This idea has already been used for other equations with
divided differences~\cite{mbm-mishna}.
\begin{Lemma}
  \label{lemma:simplereflection}
For  $k\geq 0$ one has:
$$
\Big((1+u)(1+\bu) \Omega \Big)^{(k)}\tG(u,y)
=
(1+u)^k(1+\bu) ^k \tG(u,y) - P_k(v),
$$
where $P_k\in\mathbb{K}[y][ [ z ] ] [ v ]  $ 
and $v=(1+u)(1+\bu)$.
\end{Lemma}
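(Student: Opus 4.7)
The plan is a straightforward induction on $k$. Since $(1+u)(1+\bu)=v$, the claim rewrites as $(v\Omega)^{(k)}\tG(u,y)=v^k\tG(u,y)-P_k(v)$ with $P_k\in\mathbb{K}[y][[z]][v]$, and the base case $k=0$ holds with $P_0=0$. The key preliminary observation is that $v\Omega$ preserves the ring $\mathbb{K}[y][[z]][u]$ of polynomials in $u$: for any $H(u)$ in this ring, $H(u)-H(0)$ is divisible by $u$, hence $v\Omega H(u)=(2+u+\bu)(H(u)-H(0))$ is again polynomial in $u$, because $(2+u+\bu)u=(1+u)^2$. Since $\tG(u,y)\in\mathbb{K}[y][[z]][u]$ by the discussion following~\eqref{eq:Gtilde}, the iterate $H_k(u):=(v\Omega)^{(k)}\tG(u,y)$ lies in $\mathbb{K}[y][[z]][u]$ for every $k\ge 0$, and $c_k:=H_k(0)\in\mathbb{K}[y][[z]]$ is therefore well-defined.

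Assuming the inductive hypothesis $H_k=v^k\tG(u,y)-P_k(v)$ with $P_k\in\mathbb{K}[y][[z]][v]$, I compute
$$H_{k+1}=v\Omega H_k=v\bigl(H_k-c_k\bigr)=v^{k+1}\tG(u,y)-v\bigl(P_k(v)+c_k\bigr),$$
so setting $P_{k+1}(v):=v\bigl(P_k(v)+c_k\bigr)$, which still lies in $\mathbb{K}[y][[z]][v]$, closes the induction.

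The only potentially subtle point—not an obstacle—is that the individual summands $v^k\tG(u,y)$ and $P_k(v)$ are Laurent polynomials in $u$ with genuine negative powers, so neither has a value at $u=0$ in isolation; only their difference $H_k$ does. This is precisely why the recursion $P_{k+1}=v(P_k+c_k)$ never needs an explicit formula for $c_k$: only its existence as an element of $\mathbb{K}[y][[z]]$ is used. The structural content of the lemma is thus to separate, at each iterate, the polar part in $u$ (entirely carried by $v^k\tG(u,y)$) from a polynomial-in-$v$ correction, exactly the form needed to exploit the $u\leftrightarrow\bu$ symmetry of $v$ in the subsequent steps of Section~\ref{sec:m11}.
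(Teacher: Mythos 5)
Your proof is correct and follows the same route as the paper, which simply declares the lemma ``easily proved by induction on $k$'' (offering André's reflection as an alternative viewpoint); you have filled in that induction and in particular justified why the constants $c_k=H_k(0)$ exist. One small notational slip: $\tG(u,y)$ lies in $\mathbb{K}[u,y][[z]]$, a power series in $z$ whose coefficient of $z^n$ is a polynomial in $u$ and $y$ of $n$-dependent degree, rather than in $\mathbb{K}[y][[z]][u]$ (which would impose a uniform bound on the $u$-degree); this weaker, correct statement is exactly what the paper establishes and is all your argument needs to conclude $c_k\in\mathbb{K}[y][[z]]$.
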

\begin{proof}  This is easily proved by induction on $k$. 
Let us also give  a combinatorial  argument.

Clearly, it
suffices to prove that  
for any $i\ge0$, the above property holds with   $\tG(u,y)$ replaced by
$u^i$.
Consider  walks  on the line $\ns$, starting
from $i$ and consisting of $k$ steps taken in ${-1, 0^1 , 0^2, 1}$ (the
steps $0$ are thus bicolored).  The
term $(1+u)^k(1+\bu) ^k u^i$ in  right-hand side of the above
identity counts these walks by their final position.
The left-hand side counts those walks that  never
reach $0$, except possibly at their final point. 

Hence the difference $P_k$ between the two terms counts walks that reach $0$ before their final point. Such walks consist
of a walk visiting $0$ only at its end, of length, say, $\ell$,
followed by an arbitrary walk of length $k-\ell$. This shows that
$P_k$ has the following form:
$$
P_k= \sum_{\ell=0}^{k-1} a_\ell ((1+u)(1+\bu))^{k-\ell}
$$
where $a_\ell$ is the number of $\ell$-step walks going from $i$ to $0$ and
visiting $0$ only once.
It is now clear that $P_k$ is a polynomial in $v$.
\end{proof}

Observe that the quantity $P_k(v)$, being
a function of $v=(1+u)(1+\bu)$, is left invariant by the substitution $u\mapsto
\bu$.  This symmetry is the keystone of our approach, as it 
enables us to eliminate some {\it a priori} intractable terms in \eqref{ED-uy}.
Replacing $u$ by $\bu$ in~\eqref{ED-uy} gives
$$
\frac{\partial \tG}{\partial y} (\bu,y)= \sum_k \frac{p_k}{k} z^k
\Big((1+u)(1+\bu) \Omega \Big)^{(k)}
\tG(\bu,y),
$$
so that, applying Lemma~\ref{lemma:simplereflection} and using 
$v(u)=v(\bu)$ we obtain:
$$
\frac{\partial}{\partial y} \left( \tG(u,y)-\tG(\bu,y)\right)= 
\sum_{k\geq 1} \frac{p_k}{k}z^k(1+u)^k(1+\bu)^k
\left(\tG(u,y)-\tG(\bu,y)\right)
= V(v) \left(\tG(u,y)-\tG(\bu,y)\right),
$$
where $V(v)$ is given by \eqref{eq:defV}.
This is a \emm homogeneous, linear differential equation satisfied by
$\tG(u,y)-\tG(\bu,y)$. It is readily solved, and  the initial
condition~\eqref{init-1}  yields
\beq\label{tGsol1}
\tG(u,y)-\tG(\bu,y)
=(1+u)\left(e^{-K(u)}-\bu e^{-K(\bu)}\right)e^{yV(v)}.
\eeq

\subsubsection{Reconstruction of $\tG(u,y)$}
Recall that $\tG(u,y)\equiv \tG(z,p;u,y)$ is a series in $z$ with
coefficients in $\GK[u,y]$. Hence, by extracting from the
above equation the positive part in $u$ (as defined in
Section~\ref{sec:notation}), we obtain  
$$
\tG(u,y)-\tG(0,y)=[u^>]\left(
(1+u)\left(e^{-K(u)}-\bu e^{-K(\bu)}\right)e^{yV(v)}
\right).
$$
For any Laurent polynomial $P$, we have
\beq\label{1+u}
[u^>]\left((1+u)P(u)\right)= (1+u)[u^>]P(u)+u [u^0]P(u).
\eeq
Hence
\begin{multline*}
  \tG(u,y)-\tG(0,y)
=(1+u)[u^>]\left(e^{yV(v)}\left(e^{-K(u)}-\bu
e^{-K(\bu)}\right)\right)
\\
+u[u^0]\left(e^{yV(v)}\left(e^{-K(u)}-\bu
e^{-K(\bu)}\right)\right).
\end{multline*}
Since $\tG(-1,y)=0$,  setting $u=-1$ in this equation gives the value of $\tG(0,y)$
(this is an instance of the \emm kernel method,, see \emm e.g.,~\cite{hexacephale}): 
$$
-\tG(0,y)
=-[u^0]\left(e^{yV(v)}\left(e^{-K(u)}-\bu e^{-K(\bu)}\right)\right),
$$
so that finally,
\begin{eqnarray}
\tG(u,y) 
&=&(1+u)[u^>]\left(e^{yV(v)}\left(e^{-K(u)}-\bu 
e^{-K(\bu)}\right)\right) \nonumber
\\
&& \ \hskip 50mm +(1+u)[u^0]\left(e^{yV(v)}\left(e^{-K(u)}-\bu
e^{-K(\bu)}\right) \right) \nonumber
\\
&=&(1+u)[u^\ge]\left(e^{yV(v)}\left(e^{-K(u)}-\bu
e^{-K(\bu)}\right)\right). \label{Gsol-1}
\end{eqnarray}
As explained in
Section~\ref{sec:principle}, $\tG(u,y)=G(u,y)$ will be proved if we
establish that 
$\tG(u,1)$ coincides with the series $G_1(u)$ given by~\eqref{tG1}. This
is the final step of our proof. 
\subsubsection{The case $y=1$}
\label{sec:m=y=1}
 Equation~\eqref{Gsol-1} completely describes the solution
 of~\eqref{ED-uy}. It remains to check that  
$\tG(u,1)=G_1(u)$, that is
\beq\label{G11conj}
\tG(u,1)= ({1+\bu})e^{K(u)+L}\left( (1+u)e^{-K(u)}-1\right).
\eeq
Let us set $y=1$ in~\eqref{Gsol-1}. We find, using  $V(v)=K(\bu)+L+K(u)$:
\begin{eqnarray*}
  \tG(u,1)
&=&  (1+u)[u^\ge]\left(e^{L+K(\bu)}-\bu e^{L+K(u)}\right)\\
&=& (1+u) e^L \left(1- \bu e^{K(u)}+\bu \right),
\end{eqnarray*}
which coincides with~\eqref{G11conj}. Hence
$\tG(z,p;u,y)=G(z,p;u,y)=F(t,p;x,y)$ (with the change 
of variables~\eqref{t-x-param}), and Theorem~\ref{thm:1} is proved, using~\eqref{Gsol-1}.

\section{Solution of the functional equation: the general case}
\label{sec:sol}
We now adapt to the general case the solution described for $m=1$ in
Section~\ref{subsec:sol1}.
Recall from Section~\ref{sec:principle} that we start
from~\eqref{eq:Gtilde}, and want to prove that $\tG(u,1)=G_1(u)$.  We
first obtain in Section~\ref{sec:homogeneous} the counterpart of~\eqref{tGsol1}, that is,
an explicit  expression for a linear combination of the series
$\tG(u_i,y)$, where  $u_0=u, u_1, \ldots, 
u_m$ are the $m+1$ roots of the equation $v(u)=v(u_i)$, with
$v(u)=(1+u)^{m+1}\bu^m$.  In Section~\ref{sec:reconstruct}, we  reconstruct from this
expression the series  $\tG(u,y)$, by taking iterated positive
parts. This generalizes~\eqref{Gsol-1}. The third
part of the proof differs from Section~\ref{sec:m=y=1}, because we are not able
to derive from our expression of
$\tG(u,y)$ that $\tG(u,1)=G_1(u)$. Instead, the arguments of
Section~\ref{sec:reconstruct} imply  that the counterpart
of~\eqref{tGsol1}  also has a unique solution when
$y=1$, and we  check that $G_1(u)$ is a solution. 

\subsection{A homogeneous differential equation and its solution}
\label{sec:homogeneous}
\label{sec:lemma-implicit-functions}

Let us return to the equation \eqref{eq:Gtilde} satisfied by $\tilde G(u,y)$.
This equations involves the quantity
$$
v\equiv v(u)=(1+u)^{m+1} \bu^m 
$$
 In the case $m=1$, this (Laurent) polynomial was
$(1+u)(1+\bu)$, 
and took the same value for $u$ and $\bu$. 
We are again interested in the series $u_i$ such that $v(u_i)=v(u)$.
\begin{Lemma}\label{lem:ui}
  Denote $v
=(1+u)^{m+1} u^{-m}$, and  consider the
  following polynomial equation in $U$:
$$
(1+U)^{m+1}=U^m v .
$$
This equation has no double root. We denote its $m+1$ roots by $ u_0=u,
u_1, \dots, u_m$.
\end{Lemma}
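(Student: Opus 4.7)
The plan is to read the equation as a polynomial $P(U) := (1+U)^{m+1} - v U^m$ of degree $m+1$ in $U$, with $v = v(u) = (1+u)^{m+1} u^{-m}$ playing the role of a parameter that is transcendental over $\mathbb{K}$ (since $u$ is a formal indeterminate and $v(u)$ is a nonconstant rational function of it). The root $u_0 = u$ is built into the definition of $v$, so the lemma really amounts to the assertion that the discriminant of $P$ with respect to $U$ does not vanish.

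To exclude a double root, I would suppose $U^*$ is one and combine the vanishing of $P(U^*)$ with that of $P'(U^*) = (m+1)(1+U^*)^m - mv(U^*)^{m-1}$. The case $U^* = 0$ is immediately ruled out because $P(0) = 1$. For $U^* \neq 0$, the identity $P(U^*) = 0$ allows $v$ to be eliminated from the derivative, and a short algebraic simplification (clearing denominators by $U^*$ and factoring out $(1+U^*)^m$) reduces the coupled system to
\[
(1+U^*)^m (U^* - m) = 0,
\]
pinning $U^*$ to one of the two scalar values $-1$ or $m$.

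Finally, I would note that these two possibilities force $v$ itself to take one of the \emph{scalar} values $0$ or $(m+1)^{m+1} m^{-m}$, neither of which is compatible with $v = (1+u)^{m+1} u^{-m}$ being a nonconstant rational function of $u$. This yields the desired contradiction, and hence the $m+1$ roots of $P$ are pairwise distinct in the algebraic closure of $\mathbb{K}(u)$. The only delicate point is the framing of what ``root'' means, namely working in $\mathbb{K}(u)$ and its algebraic closure (where the $u_i$ will later appear naturally as Puiseux series in $u$); once that is set, the argument is essentially the $\gcd(P, P')$ computation carried out above, and I do not anticipate any real obstacle.
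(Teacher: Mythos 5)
Your proof is correct and takes essentially the same approach as the paper: the paper's proof simply states that a double root would satisfy $(m+1)(1+U)^m = mU^{m-1}v$ and declares this "easily shown to be impossible", and your elimination of $v$ and reduction to $(1+U^*)^m(U^*-m)=0$ is exactly the computation that phrase is gesturing at. You also correctly set the algebraic framing (working over $\mathbb{K}(u)$ with $v$ transcendental over $\mathbb{K}$), which the paper only makes explicit in the remark that follows the lemma.
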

\begin{proof}
A double root  would also satisfy
$$
(m+1) (1+U)^m =mU^{m-1}v,
$$
and this is easily shown to be impossible.
\end{proof}

\noindent{\bf Remark.} One can 
 express the $u_i$'s as
Puiseux series in $u$ (see~\cite[Ch.~6]{stanley-vol2}), but this will
not be needed here, and we will think of them as abstract elements of an
 algebraic extension of $\qs(u)$. In fact, in this paper, the $u_i$'s
 always occur 
in \emm symmetric rational functions, of the $u_i$'s, which are thus  rational functions of $v$. At
some point, we will have to prove that a symmetric polynomial in the
$u_i$'s (and thus a polynomial in $v$) vanishes at $v=0$, that is, at
$u=-1$, and we will then  consider series expansions of the $u_i$'s around $u=-1$.

\medskip
The following proposition generalizes~\eqref{tGsol1}.
\begin{Proposition}\label{prop:combi-lin-m}
 Denote $v=(1+u)^{m+1}
u^{-m}$, and let the series $u_i$ be defined as above. Denote
$A_i=A(u_i)$, where $A(u)$ is given by~\eqref{A-def}. Then 
\begin{eqnarray}\label{eq:combi-lin-y}
\sum_{i=0}^m \frac{\tilde G(u_i,y)}{\prod_{j\neq i} (A_i-A_j)} = v e^{yV(v)}.
\end{eqnarray}
\end{Proposition}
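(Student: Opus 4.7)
The plan is to mimic the strategy used for $m=1$ in Section~\ref{sec:m11}, replacing the $u\leftrightarrow\bar u$ symmetry by an $(m+1)$-point divided-difference operation. Writing $T := zv A(u)^m\Lambda^{(m)}$ for the operator on the right-hand side of~\eqref{eq:Gtilde} and $c_i := 1/\prod_{j\neq i}(A_i-A_j)$, I would introduce
$$\Phi(y) := \sum_{i=0}^m c_i\,\tG(u_i,y),$$
which is precisely the left-hand side of~\eqref{eq:combi-lin-y} (i.e. the $m$-th divided difference of $\tG(\cdot,y)$ at $A_0,\dots,A_m$). The central step will be the transfer identity
\begin{equation}\label{eq:key-plan}
\sum_{i=0}^m c_i\,T^{(k)} H(u_i) = (zv)^k \sum_{i=0}^m c_i\,H(u_i), \qquad k\geq 0,
\end{equation}
valid for any formal power series $H(u)$. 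Granted~\eqref{eq:key-plan}, summing~\eqref{eq:Gtilde} at the points $u_i$ against $c_i$ yields immediately the homogeneous linear ODE $\Phi'(y)=V(v)\,\Phi(y)$, whose unique solution is $\Phi(y)=\Phi(0)\,e^{yV(v)}$.

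The base case $k=1$ of~\eqref{eq:key-plan} is the main ingredient. The key observation is that $A(u)=u+O(u^2)$ (since $K(0)=0$), so $A$ serves as a local parameter at $u=0$, and in the corresponding $A$-expansion $H=\sum_{j\geq 0}h_j A^j$ the operator $\Lambda$ acts as the left-shift that drops the constant term. Iterating gives the formal-power-series identity
$$A^m\Lambda^{(m)}H = H - \tilde P_H(A), \qquad \tilde P_H(A) := \sum_{j=0}^{m-1}h_j A^j,$$
so $TH(u_i) = zv\bigl(H(u_i)-\tilde P_H(A_i)\bigr)$, using $v(u_i)=v$ by definition of the $u_i$. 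Summing against $c_i$ annihilates the $\tilde P_H(A_i)$ contribution, because $\tilde P_H$ has degree less than $m$ in $A$ and the $m$-th divided difference kills every polynomial of degree $<m$. The general case of~\eqref{eq:key-plan} then follows by writing $T^{(k+1)}=T^{(k)}\circ T$ and applying the inductive hypothesis to $H':=TH$. For the initial value, the condition~\eqref{init:Gtilde} combined with the identity $(1+u)e^{-mK(u)}=vA(u)^m$ gives $\tG(u_i,0)=vA_i^m$, whence
$$\Phi(0) = v\sum_{i=0}^m c_iA_i^m = v,$$
the last equality being the classical fact that the $m$-th divided difference of the monic monomial $A\mapsto A^m$ at $m+1$ distinct points equals its leading coefficient~$1$.

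The main conceptual obstacle, I expect, is the identification of the ``correct'' linear combination, namely the divided-difference coefficients $c_i$: it is this choice that both recovers the $u\leftrightarrow\bar u$ cancellation from the $m=1$ case \emph{and} annihilates the degree-$<m$ polynomial $\tilde P_H(A)$ produced by $A^m\Lambda^{(m)}$. The only technical point to be checked carefully is that the identity $A^m\Lambda^{(m)}H=H-\tilde P_H(A)$, with $\tilde P_H$ defined via the expansion at $u=0$, is a genuine equality of formal power series in $u$, so that both sides can be legitimately evaluated at the auxiliary roots $u_1,\dots,u_m$ (viewed in an algebraic extension of $\mathbb{Q}(u)$, as discussed after Lemma~\ref{lem:ui}).
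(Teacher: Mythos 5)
Your proposal is correct and is essentially the paper's own argument. The linear combination with coefficients $c_i = 1/\prod_{j\neq i}(A_i-A_j)$ is exactly the one in~\eqref{combine}; the key identity $A^m\Lambda^{(m)}H=H-\tilde P_H(A)$ with $\deg_A\tilde P_H<m$, annihilated by the $m$-th divided difference, is precisely the content of Lemma~\ref{lemma:implicit-functions} and \eqref{eq:Lagrange-poly}; the resulting homogeneous ODE and the initial-condition computation via \eqref{eq:Lagrange-inv} are identical. The one point you gloss over (and should spell out, as the paper does in the last paragraph of the proof of Lemma~\ref{lemma:implicit-functions}) is that $TH=zvA^m\Lambda^{(m)}H$ is again regular at $u=0$: since $v$ has a pole of order $m$ there, this requires checking that $A^m\Lambda^{(m)}H=O(u^m)$, which is what lets your induction $T^{(k+1)}=T^{(k)}\circ T$ iterate.
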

\noindent By $\prod_{j\neq i} (A_i-A_j)$ we mean $\prod_{0\le j \le m, j\neq i}
(A_i-A_j)$ but we prefer the shorter notation when the bounds
on $j$ are clear.
 Observe that the $A_i$'s are distinct since the $u_i$'s are distinct
(the coefficient of $z^0$ in $A(u)$ is $u/(1+u)$). Note also that
when $m=1$, then $u_0=u$, $u_1= \bu$, and~\eqref{eq:combi-lin-y}
coincides with~\eqref{tGsol1}. 
In order to prove the proposition, we need the following  two lemmas.
\begin{Lemma}\label{lemma:Lagrange}
Let $x_0, x_1, \dots , x_m$ be $m+1$  variables. Then
\beq\label{eq:Lagrange-inv}
\sum_{i=0}^m \frac{{x_i}^m}{\prod_{j\neq i}{(x_i-x_j)}} = 1
\eeq
and
\beq\label{eq:Lagrange-inv-bis}
\sum_{i=0}^m \frac{1/x_i}{\prod_{j\neq i}{(x_i-x_j)}} = (-1)^{m} \prod_{i=0}^m
\frac{1}{x_i}.
\eeq

\smallskip \noindent
Moreover, for any polynomial $Q$ of degree less than $m$,
\begin{eqnarray}\label{eq:Lagrange-poly}
\sum_{i=0}^m \frac{Q(x_i)}{\prod_{j\neq i}{(x_i-x_j)}} = 0.
\end{eqnarray}
\end{Lemma}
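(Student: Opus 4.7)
The plan is to deduce all three identities from the standard Lagrange interpolation formula. Since $x_0, x_1, \ldots, x_m$ are $m+1$ pairwise distinct variables, any polynomial $P(x)$ of degree at most $m$ admits the Lagrange representation
\begin{equation*}
P(x) = \sum_{i=0}^m P(x_i) \prod_{j\neq i} \frac{x-x_j}{x_i-x_j}.
\end{equation*}
Extracting the coefficient of $x^m$ from both sides gives the master identity
\begin{equation*}
[x^m] P(x) = \sum_{i=0}^m \frac{P(x_i)}{\prod_{j\neq i}(x_i-x_j)}.
\end{equation*}

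From this master identity, \eqref{eq:Lagrange-inv} follows by taking $P(x)=x^m$ (whose leading coefficient is $1$), and \eqref{eq:Lagrange-poly} follows by taking any $P$ of degree less than $m$ (whose coefficient of $x^m$ is $0$). So these two identities are essentially immediate.

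For \eqref{eq:Lagrange-inv-bis}, the polynomial identity cannot be applied directly because $1/x$ is not a polynomial. Instead, I would use the partial fraction expansion
\begin{equation*}
\frac{1}{\prod_{i=0}^m (x-x_i)} = \sum_{i=0}^m \frac{1}{(x-x_i)\prod_{j\neq i}(x_i-x_j)},
\end{equation*}
which is itself a consequence of Lagrange interpolation (or a direct residue computation). Specializing at $x=0$ yields
\begin{equation*}
\frac{1}{\prod_{i=0}^m (-x_i)} = -\sum_{i=0}^m \frac{1/x_i}{\prod_{j\neq i}(x_i-x_j)},
\end{equation*}
and rearranging (using $\prod_i(-x_i)=(-1)^{m+1}\prod_i x_i$) gives exactly \eqref{eq:Lagrange-inv-bis}. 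No step here is really an obstacle; the main point is choosing the right test function ($x^m$, $Q(x)$, and a rational test via partial fractions) to feed into Lagrange interpolation.
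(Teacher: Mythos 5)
Your proof is correct and follows the same basic strategy as the paper, namely deducing all three identities from the Lagrange interpolation formula. The only cosmetic differences are that the paper establishes \eqref{eq:Lagrange-poly} by evaluating the interpolation identity for $R(X)=XQ(X)$ at $X=0$ (rather than extracting the coefficient of $x^m$ as you do), and establishes \eqref{eq:Lagrange-inv-bis} by taking $R(X)=1$ and evaluating at $X=0$ (rather than invoking the partial-fraction expansion, which is of course just a repackaging of the same fact).
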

\begin{proof}
By Lagrange interpolation, any polynomial $R$ of degree at most $m$
satisfies:
$$
R(X)=\displaystyle\sum_{i=0}^m {R(x_i)}
\prod_{j\neq i}\frac{X-x_j}{x_i-x_j}.
$$
Equations \eqref{eq:Lagrange-inv-bis} and~\eqref{eq:Lagrange-poly} follow  by
evaluating this equation at $X=0$, respectively with $R(X)=1$
and $R(X)=X Q(X)$.
Equation~\eqref{eq:Lagrange-inv} is obtained by taking $R(X)=X^m$ and
extracting the leading coefficient. 
\end{proof}

Our second lemma replaces Lemma~\ref{lemma:simplereflection} for general values of $m$.
\begin{Lemma}\label{lemma:implicit-functions} 
Let $k\geq 0$, and let $\Lambda$ be the operator defined by~\eqref{Lambda-def}. 
Let  $H(z;u)\equiv H(u)$ be a \fps\ in $z$,  having coefficients in
$\GL(u)$, with  $\GL=\GK(y)$.
Assume that these coefficients have no pole at $u=0$.
Then there exists a polynomial $P_k(X,Y) \in \GL [[ z ]] [X,Y]$ of
degree less than $m$ in $X$, such that
\beq\label{eq:operateuriter-A}
\Big(z v A(u)^m\Lambda^{(m)} \Big)^{(k)} H(u) = (zv)^k 
H(u) - P_k(A(u),v).
\eeq
\end{Lemma}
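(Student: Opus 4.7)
The plan is to prove the lemma by induction on $k \geq 0$. The base case $k=0$ holds trivially with $P_0 \equiv 0$.

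The key algebraic identity driving the induction is a "Taylor-type expansion" with respect to $\Lambda$: since the definition $\Lambda G = (G-G(0))/A$ is equivalent to $G = A(u)\cdot \Lambda G + G(0)$, iterating this $m$ times yields, for any $G$ whose coefficients lie in $\GL(u)$ and are regular at $u=0$,
\begin{equation*}
G(u) \;=\; A(u)^m\, \Lambda^{(m)} G(u) \;+\; \sum_{i=0}^{m-1} A(u)^i \cdot \bigl[\Lambda^{(i)} G\bigr](0).
\end{equation*}
For this to make sense, one must verify that $\Lambda$ preserves regularity at $u=0$, so that each $[\Lambda^{(i)} G](0) \in \GL[[z]]$. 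This is immediate from writing $\Lambda G = (G-G(0))/u \cdot (1+u)e^{K(u)}$: the first factor is regular by Taylor expansion, and the second is regular with value $1$ at $u=0$ since $K(0)=0$. The Taylor-type identity is then proved by induction on $m$, starting from the base case already noted.

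Multiplying this identity by $zv$ and using the crucial simplification $z v A(u)^m = z(1+u)\,e^{-m K(u)}$, which is regular at $u=0$, I rewrite the operator $\Phi := zv A^m \Lambda^{(m)}$ in the form
\begin{equation*}
\Phi G \;=\; zv\cdot G \;-\; zv \sum_{i=0}^{m-1} A(u)^i \cdot \bigl[\Lambda^{(i)} G\bigr](0),
\end{equation*}
valid for any such regular $G$. Applied to $G = \Phi^{(k)} H$, which is regular at $u=0$ (since $\Phi$ preserves regularity), and combined with the inductive hypothesis $\Phi^{(k)} H = (zv)^k H - P_k(A,v)$, this gives
\begin{align*}
\Phi^{(k+1)} H
\;&=\; zv\cdot \Phi^{(k)}H \;-\; zv \sum_{i=0}^{m-1} A^i\, c_{i,k} \\
\;&=\; (zv)^{k+1} H \;-\; z v\, P_k(A,v) \;-\; zv \sum_{i=0}^{m-1} A^i\, c_{i,k},
\end{align*}
where $c_{i,k} := [\Lambda^{(i)} \Phi^{(k)} H](0) \in \GL[[z]]$. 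Setting
\begin{equation*}
P_{k+1}(X,Y) \;:=\; Y z\, P_k(X,Y) \;+\; Y z \sum_{i=0}^{m-1} c_{i,k}\, X^i,
\end{equation*}
we obtain $\Phi^{(k+1)} H = (zv)^{k+1} H - P_{k+1}(A,v)$. This $P_{k+1}$ is a polynomial in $(X,Y)$ with coefficients in $\GL[[z]]$, and $\deg_X P_{k+1} = \max(\deg_X P_k,\, m-1) < m$, closing the induction.

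The proof is mostly bookkeeping once the Taylor-type identity is in place; the only mild obstacle is the verification that $\Lambda$ preserves the class of power series with coefficients regular at $u=0$, which ensures that the evaluations $[\Lambda^{(i)}G](0)$ are well-defined elements of $\GL[[z]]$. This mirrors the role played in the $m=1$ case by Lemma~\ref{lemma:simplereflection}, where the operator $\Omega H = H-H(0)$ replaces $\Lambda$ and the Taylor identity reduces to the trivial relation $H = \Omega H + H(0)$.
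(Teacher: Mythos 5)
Your proof is correct and follows essentially the same route as the paper: establish an expansion $G = A(u)^m\Lambda^{(m)}G + \sum_{i<m}A(u)^i\,[\Lambda^{(i)}G](0)$, multiply by $zv$, and run a single induction on $k$, keeping track both of the polynomial $P_k$ and of regularity at $u=0$. The only cosmetic difference is that you derive the expansion directly by iterating $G = A\,\Lambda G + G(0)$, whereas the paper reaches the equivalent identity~\eqref{eq:Lambdaiter-A} via a preliminary statement~\eqref{eq:implicit-functions} about expanding an arbitrary regular series in powers of $A(u)$; your shortcut is a mild streamlining, not a different argument.
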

\begin{proof}
We give here a power series argument, but an analogue of the combinatorial
argument proving  Lemma~\ref{lemma:simplereflection} (carefully
justified) also exists.

We denote by $\mathcal{L}$ the subring of $\GL(u)[[z]] $
  formed by formal power series whose coefficients have no pole at $u=0$. 
  By assumption, $H(u)\in \mathcal{L}$.
  We  use the notation $O(u^k)$   to denote an element of $\GL(u)[
    [z]]$ of the   form $u^k J(z;u)$ with $J(z;u)\in \mathcal{L}$.

  First, note that  $A(u)=ue^{-K(u)}/(1+u)$
belongs to $\cL$.
Moreover, 
  \beq\label{eq:Adev}
   A(u)=u+ O(u^2).
   \eeq  
  We 
 first prove 
  that for all series $I(u)\in \mathcal{L}$, there exists a sequence of formal power series
  $(g^I_j )_{j\geq0}\in\GL[[z]]^\ns$ such that for all $\ell\geq 0$, 
  \begin{eqnarray}\label{eq:implicit-functions}
    I(u) = \sum_{j=0}^{\ell-1} g^I_j  A(u)^j + O(u^{\ell}).
  \end{eqnarray}
We prove~\eqref{eq:implicit-functions} by
induction on $\ell \ge 0$.   The identity holds for $\ell=0$ since
$I(u)\in\mathcal{L}$. Assume it holds for some $\ell\geq
  0$:  there exists 
series $g^I_0, \ldots, g^I_{\ell-1}$ in $\GL[[z]]$
  and $J(u)\in\mathcal{L}$ such that
  \begin{eqnarray*}
    I(u) = \sum_{j=0}^{\ell-1} g^I_j  A(u)^j + u^{\ell} J(u).
  \end{eqnarray*}
  By~\eqref{eq:Adev} and by induction on $r$, we have 
  $u^r = A(u)^r +O(u^{r+1})$ for all $r\geq 0$. 
Using this identity with $r=\ell$, and rewriting $J(u)=J(0)+O(u)$,
we obtain $u^\ell   J(u) = J(0) A(u)^\ell +O(u^{\ell+1})$,
  so  that:
  \begin{eqnarray*}
    I(u) = \sum_{j=0}^{\ell} g^I_j  A(u)^j + O(u^{\ell+1}),
  \end{eqnarray*}
  with $g^I_\ell :=J(0) \in \GL[[z]]$. Thus~\eqref{eq:implicit-functions} holds for $\ell+1$.

\smallskip
  We now prove that for all $q\geq 0$, one has:
 \beq\label{eq:Lambdaiter-A}
\Lambda^{(q)} I(u)= \frac{1}{A(u)^q} 
\left(I(u) - \sum_{j=0}^{q-1} g^I_j A(u)^{j}\right),
\eeq 
where the series $g^I_j $ 
are those that satisfy~\eqref{eq:implicit-functions}.
Again, we proceed by
  induction on $q\ge 0$. The identity  clearly holds for $q=0$. Assume
  it holds for some $q\geq 0$. 
In~\eqref{eq:Lambdaiter-A}, replace
  $I(u)$ by its expression~\eqref{eq:implicit-functions} obtained with
  $\ell=q+1$, and let $u$ tend to $0$: this  shows that
  $g^I_q $ is in fact $\Lambda^{(q)}I(0)$. From the definition of
  $\Lambda$ one then   obtains 
$$
\Lambda^{(q+1)}I
  (u)=\frac{\Lambda^{(q)}I(u)-g^I_q }{A(u)}=\frac{1}{A(u)^{q+1}}\left(I(u)
    - \sum_{j=0}^{q} g^I_j A(u)^{j}\right).
$$
  Thus~\eqref{eq:Lambdaiter-A} holds for $q+1$.

\smallskip
We  finally prove, by induction on $k\geq 0$, that
\eqref{eq:operateuriter-A} holds and that the left-hand side of
\eqref{eq:operateuriter-A} is an element of $\mathcal{L}$.
For $k=0$, these results are clear, with $P_0=0$.  Assume
they hold for some $k\ge 0$, for any $H(u)\in\mathcal{L}$.
Let $H(u)\in \mathcal{L}$ and let $M(u)$ 
be the left-hand side of~\eqref{eq:operateuriter-A}. 
 By the induction hypothesis, $M(u)\in\mathcal{L}$, so that
applying 
\eqref{eq:Lambdaiter-A} with $I(u)=M(u)$ and $q=m$ gives:
\begin{eqnarray}
  zvA(u)^m\Lambda^{(m)} M(u)
  &=&zv \left(M(u) - \sum_{j=0}^{m-1} g^M_j A(u)^y\right).\label{eq:devk+1}
\\
&=&\left(zvA(u)^m\Lambda^{(m)}\right)^{(k+1)}H(u) \hskip 20mm \hbox{by definition of } M.
  \nonumber 
\end{eqnarray}
By the induction hypothesis~\eqref{eq:operateuriter-A}, we have $M(u) = (zv)^k
H(u) - P_k(A(u),v)$ with $P_k(X,Y)$ of degree less than $m$ in $X$, so
that the above equation gives:
\begin{eqnarray*}
  \left(zvA(u)^m\Lambda^{(m)}\right)^{(k+1)}H(u) 
  &=& (zv)^{k+1} H(u) - P_{k+1}(A(u),v),
\end{eqnarray*}
with 
$$
P_{k+1}(X,Y):= zY \left(P_k(X,Y)+ \sum_{j=0}^{m-1} g^M_j  X^j
\right).
$$
 Note that $P_{k+1}(X,Y)$ still has degree 
 less than $m$ in $X$. 

It remains to prove that
$\left((zvA(u)^m\Lambda^{(m)}\right)^{(k+1)}H(u)\in\mathcal{L}$. 
Applying \eqref{eq:implicit-functions} with $I(u)=M(u)$ and $\ell=m+1$, 
and substituting in \eqref{eq:devk+1}, we obtain:
\begin{eqnarray*}
  \left(zvA(u)^m\Lambda^{(m)}\right)^{(k+1)}H(u)&=& zv \left(g^M_m  A(u)^m +O(u^{m+1})\right) \\
  &=& z v  u^m \left(g^M_m +O(u)\right), 
\end{eqnarray*}
since $A(u)^m = u^m + O(u^{m+1})$.
Since $v=(1+u)^{m+1}u^{-m}$, this shows that  
$\left(zvA(u)^m\Lambda^{(m)}\right)^{(k+1)}H(u)$ belongs to $\mathcal{L}$, which 
completes the proof.
\end{proof}

\begin{proof}[Proof of Proposition~{\rm\ref{prop:combi-lin-m}}]
  Thanks to Lemma~\ref{lemma:implicit-functions}, we can rewrite
  \eqref{eq:Gtilde} as 
\begin{eqnarray}\label{eq:Gy-poly}
\frac{\partial \tilde G}{\partial y} (u,y)=
\sum_{k\geq 1}
\frac{p_k}{k} \Big((z v)^k \tilde G(u,y) - P_k(A(u),v)\Big),
\end{eqnarray}
where $v\equiv v(u)=(1+u)^{m+1}\bu^m$, and for all $k\geq 1$, $P_k(X,Y)$ is a polynomial of
degree less than $m$ in $X$ with coefficients in 
$\mathbb{K}(y)[[ z ]] $.

As was done in Section~\ref{sec:m11}, we are going to use the fact that
$v(u_i)=v$ for all  $i\in\llbracket 0, m\rrbracket$ to eliminate the
(infinitely many) unknown polynomials $P_k(A(u),v)$. For $0\le i \le m$,  the
substitution $u\mapsto u_i$ in~\eqref{eq:Gy-poly} gives:
\begin{eqnarray}\label{eq:Gy-poly-ui}
\frac{\partial \tilde G}{\partial y} (u_i,y)=
\sum_{k\geq 1}
\frac{p_k}{k} \Big( (zv)^k \tilde G(u_i,y) - P_k(A_i,v) \Big),
\end{eqnarray}
with $A_i=A(u_i)$. Consider the linear combination
\beq\label{combine}
R(u,y):=
\sum_{i=0}^m \frac{\tilde G(u_i,y)}{\prod_{j\neq i}{(A_i-A_j)}}.
\eeq
Recall that $A_i$ is independent of $y$. 
Thus   by~\eqref{eq:Gy-poly-ui},
$$
\begin{array}{llll}
\displaystyle \frac{\partial R}{\partial y} (u,y)&=&
\displaystyle \sum_{k\geq 1}\frac{p_k}{k}\left( 
 (z v)^k R(u,y) - \sum_{i=0}^m\frac{P_k(A_i,v)}{\prod_{j\neq i}{(A_i-A_j)}} \right),
\\
&=&
\displaystyle \sum_{k \geq 1}\frac{p_k}{k} (z v)^k R(u,y), &  \hskip 10mm \hbox{ by~\eqref{eq:Lagrange-poly}},
\\
&=& V(v) R(u,y),
\end{array}
$$
where $V(v)$ is defined by~\eqref{eq:defV}. This homogeneous linear differential equation is readily solved:
$$
R(u,y)=R(u,0) e^{yV(v)}.
$$
Recall the expression~\eqref{combine} of $R$ in terms of $\tG$.
The initial 
condition~\eqref{init:Gtilde} can be rewritten $\tG(u,0)=v A(u)^m$, which yields
$$
\begin{array}{llll}
R(u,0)& =& \displaystyle v\sum_{i=0}^m \frac{ {A_i}^m}{\prod_{j\neq i}{(A_i-A_j)}} 
\\
&=& v & 
\end{array}
$$
by~\eqref{eq:Lagrange-inv}. Hence $R(u,y)=ve^{yV(v)}$, and the proposition is proved.
\end{proof}

\subsection{Reconstruction of $\tG(u,y)$}
\label{sec:reconstruct}
We are now going to prove that~\eqref{eq:combi-lin-y}, together with the
condition $\tG(-1,y)=0$ 
derived from~(\ref{eq:Gtilde0}--\ref{init:Gtilde}),
 characterizes the series $\tG(u,y)$. We will actually obtain 
a (complicated) expression for
this series, generalizing~\eqref{Gsol-1}. 

We first introduce some notation.  Consider a \fps\ in $z$, denoted $H(z;u)\equiv
H(u)$, having coefficients in $\GL[u]$,
where  $\GL=\GK(y)$.
We define a series $H_k$ in $z$ whose coefficients are rational symmetric
functions  of $k+1$ variables $x_0, \ldots, x_k$: 
\beq\label{def-Hk}
H_k(x_0, \ldots, x_k)= \sum_{i=0}^k \frac{H(x_i)}{\displaystyle \prod_{0\le j \le k,
    j\not = i} (A(x_i)-A(x_j))},
\eeq
where, as above, $A$ is defined by~\eqref{A-def}.
\begin{Lemma}\label{lem:Hk}
  The series $H_k(x_0, \ldots, x_k)$ has coefficients in $\GL[x_0,
    \ldots , x_k]$. If, moreover, $H(-1)=0$,
then
the coefficients of $H_k$ are multiples of $(1+x_0) \cdots (1+x_k)$.
\end{Lemma}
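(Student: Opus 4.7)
My plan is to proceed by induction on $k$, based on the divided-difference recurrence
$$
H_k(x_0, \ldots, x_k) = \frac{H_{k-1}(x_0, \ldots, x_{k-1}) - H_{k-1}(x_1, \ldots, x_k)}{A(x_0) - A(x_k)},
$$
which follows directly from \eqref{def-Hk}: isolating in each $H_{k-1}$ the terms with $i=0$ and $i=k$ reproduces the boundary terms of $H_k$, while for $1\le i\le k-1$ the two surviving contributions combine via the partial-fraction identity
$$
\frac{1}{A(x_0)-A(x_k)}\left(\frac{1}{A(x_i)-A(x_0)} - \frac{1}{A(x_i)-A(x_k)}\right) = \frac{1}{(A(x_i)-A(x_0))(A(x_i)-A(x_k))}.
$$
Observe also that each $H_k$ is manifestly symmetric in $x_0, \ldots, x_k$.

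The second ingredient is a factorization of the denominator. Since $K(u)$ is a formal power series in $z$, with polynomial coefficients in $u$ and the $p_i$'s, vanishing at $z=0$, the difference $K(x)-K(y)$ factors as $(x-y)\kappa(x,y)$ with $\kappa$ of the same type. The $z$-adically convergent expansion
$$
e^{-K(x)} - e^{-K(y)} = e^{-K(y)} \sum_{n\geq 1} \frac{(-1)^n(K(x)-K(y))^n}{n!}
$$
then shows that $e^{-K(x)} - e^{-K(y)} = (x-y)\tilde\kappa(x,y)$ with $\tilde\kappa$ also of that type. Multiplying $A(x)-A(y)$ by $(1+x)(1+y)$ and regrouping leads to
$$
A(x) - A(y) = \frac{(x-y)\,\beta(x,y)}{(1+x)(1+y)},
$$
where $\beta(x,y)$ is again a formal power series in $z$ with polynomial coefficients in $x,y$ (and the $p_i$'s), satisfying $\beta|_{z=0}=1$. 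In particular, $\beta^{-1}$ has the same form.

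With these two tools, the induction is straightforward. For the first claim, the base case $k=0$ reduces to $H_0=H\in\GL[x_0][[z]]$. For the inductive step, the symmetry of $H_{k-1}$ in its $k$ arguments forces the numerator $N := H_{k-1}(x_0,\ldots,x_{k-1}) - H_{k-1}(x_1,\ldots,x_k)$ to vanish at $x_0=x_k$, so $N = (x_0-x_k)\,Q$ for some $Q\in\GL[x_0,\ldots,x_k][[z]]$. The recurrence combined with the factorization of $A(x_0)-A(x_k)$ then gives $H_k = (1+x_0)(1+x_k)\,Q/\beta(x_0,x_k)$, which is polynomial in all the $x_i$ because $\beta^{-1}$ is. For the second claim, if $H(-1)=0$ then $H_0=H$ is already divisible by $(1+x_0)$. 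Inductively, writing $H_{k-1} = \prod_{i=0}^{k-1}(1+x_i)\,R$ with $R$ polynomial and (by symmetry of $H_{k-1}$) symmetric, one can factor $\prod_{i=1}^{k-1}(1+x_i)$ out of $N$, and the remaining bracket $(1+x_0)R(x_0,\ldots,x_{k-1}) - (1+x_k)R(x_1,\ldots,x_k)$ vanishes at $x_0=x_k$ by the symmetry of $R$, hence equals $(x_0-x_k)\,\tilde R$. Substituting in the recurrence produces the full product $\prod_{i=0}^{k}(1+x_i)$ in $H_k$.

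The main technical obstacle I anticipate is establishing the factorization of $A(x)-A(y)$ cleanly enough that $\beta^{-1}$ is still polynomial in $x$ and $y$; once that is in place, both statements follow in parallel from the single divisibility-preserving induction sketched above.
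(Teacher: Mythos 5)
Your proof is correct, and it takes a genuinely different route from the paper's, though both rest on the same key factorization of $A(x)-A(y)$ (your $\beta$ is essentially the paper's $C(x_i,x_j)^{-1}$, once one clears the $(1+x_i)(1+x_j)/(x_i-x_j)$ prefactor). The paper proceeds non-recursively: for polynomiality it multiplies $H_k$ by the Vandermonde $\prod_{i<j}(x_i-x_j)$, observes that the result has polynomial coefficients and is antisymmetric, hence divisible by the Vandermonde; for the $(1+x_i)$-divisibility it writes $H(x)=(1+x)\bar H(x)$, rewrites each term of the sum~\eqref{def-Hk} so that a factor $(1+x_i)^{k+1}\prod_{j\neq i}(1+x_j)$ is visible, and evaluates at $x_0=-1$ to kill the whole expression, concluding by symmetry. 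You instead verify the divided-difference recurrence
$$H_k(x_0,\ldots,x_k)=\frac{H_{k-1}(x_0,\ldots,x_{k-1})-H_{k-1}(x_1,\ldots,x_k)}{A(x_0)-A(x_k)},$$
and run a single divisibility-preserving induction on $k$: symmetry of $H_{k-1}$ forces the numerator to vanish at $x_0=x_k$, so it factors as $(x_0-x_k)Q$; the factorization $A(x_0)-A(x_k)=(x_0-x_k)\beta(x_0,x_k)/((1+x_0)(1+x_k))$ with $\beta|_{z=0}=1$ then both cancels the $(x_0-x_k)$ and supplies the extra $(1+x_0)(1+x_k)$ needed for the second claim. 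The upshot is that your argument treats both assertions with one mechanism and avoids the antisymmetry/Vandermonde step, at the cost of first establishing the recurrence; the paper's version is shorter once the two tricks (Vandermonde divisibility, then evaluation at $-1$) are granted. One small remark: in your inductive step for the second assertion, the symmetry of $R=H_{k-1}/\prod_{i=0}^{k-1}(1+x_i)$ is indeed automatic since both $H_{k-1}$ and the product are symmetric and you have already shown $R$ has polynomial coefficients — it is worth saying this explicitly, as it is the step that makes the bracket vanish at $x_0=x_k$.
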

\begin{proof}
Using the fact that $e^{-K(u)}=1+O(z)$,
 it is not hard to prove that
\beq\label{AB}
\frac 1 {A(x_i)-A(x_j)}= \frac 1{x_i-x_j} B(x_i,x_j),
\eeq
where $B(x_i,x_j)$ is a series in $z$ with \emm polynomial,
coefficients in $x_i$ and $x_j$.
Hence 
$$
 H_k(x_0, \ldots, x_k) \prod_{0\le i<j\le k} (x_i-x_j)
$$
has  polynomial coefficients in the $x_i$'s. But these polynomials are
anti-symmetric in the $x_i$'s (since $H_k$ is symmetric), hence they must be
multiples of the Vandermonde $\prod_{i<j} (x_i-x_j)$. Hence $H_k(x_0,
\ldots, x_k)$ has polynomial coefficients.

\medskip
A stronger property than~\eqref{AB} actually holds, namely:
$$
\frac 1 {A(x_i)-A(x_j)}= \frac {(1+x_i)(1+x_j)}{x_i-x_j}\, C(x_i,x_j),
$$
where $C(x_i,x_j)$ is a series in $z$ with  polynomial
coefficients in $x_i$ and $x_j$. Hence, 
if $H(-1)=0$, that is, 
if $H(x)=(1+x)K(x)$ where $K(x)$ has polynomial coefficients in $x$,
$$
H_k(x_0, \ldots, x_k)= \sum_{i=0}^k 
{K(x_i)}(1+x_i)^{k+1}\prod_{j\not = i}\frac{(1+x_j)C(x_i,x_j)}{ x_i-x_j}.
$$
Setting $x_0=-1$ shows that $H_k(-1,x_1, \ldots, x_k)=0$, so that
$H_k(x_0, \ldots, x_k)$ is a multiple of $(1+x_0)$. By symmetry, it is
also a multiple of all $(1+x_i)$, for $1\le i \le k$.
\end{proof}

Our treatment of~\eqref{eq:combi-lin-y} actually applies to equations
with an arbitrary right-hand side. We consider a  \fps\   $H(z;u)\equiv
H(u)$ with coefficients in $\GL[u]$,
  satisfying $H(-1)=0$ and 
$$
\sum_{i=0}^m \frac{H(u_i)}{\prod_{j\neq i} (A_i-A_j)} = \Phi_m(v),
$$
for some series $\Phi_m(v)\equiv\Phi_m(z;v)$ with coefficients in
$v\GL[v]$, where $v=(1+u)^{m+1}\bu^m$.
Using the  notation~\eqref{def-Hk}, this equation can be rewritten as
$$
H_m(u_0, \ldots, u_m)= \Phi_m(v) .
$$
We will give an explicit expression of $H(u)$ involving two standard
families of symmetric functions,
namely the homogeneous functions $h_\lambda$ 
and the monomial functions $m_\lambda$.

\begin{quote}
{\noindent \bf Caveat. }
{\it These symmetric functions will be evaluated at $(u_0,u_1,\dots
  u_m)$ or $(A(u_0), \ldots, A(u_m))$. They have nothing to do with the
  variables $p_k$ involved in the \gf\ $F^{(m)}(t,p;x,y)$.}
\end{quote}
We also use the following notation:  For any subset
$V=\{i_1, \ldots, i_k\}$ of $\llbracket 0,  m\rrbracket$, of
cardinality $k$,  and any sequence $(x_0, \ldots, x_m)$, we
denote $x_V=\{ x_{i_1}, \ldots, x_{i_k}\}$.

\begin{Proposition}\label{prop:extraction}
  Let $H(z;u)\equiv H(u)$ be a power series in $z$ with coefficients
  in $\GL[u]$,   satisfying $H(-1)=0$ and
\beq\label{eq-sym}
H_m(u_0, \ldots, u_m)= \Phi_m(v) ,
\eeq
where $\Phi_m(v)\equiv\Phi_m(z;v)$ is a series in $z$ with coefficients in
$v\GL[v]$. 

There exists a 
 sequence $\Phi_0, \ldots, \Phi_m$ of series in $z$ with
coefficients in $v\GL[v]$, which depend only on $\Phi_m$,  such that
for $0\le k \le m$, and for all
subset $V$ of $\llbracket 0,m\rrbracket$ of cardinality $k+1$,
\beq\label{eq-sym-k}
H_k(u_V
)= \sum_{j=k}^m \Phi_j(v)
h_{j-k}(A_V
).
\eeq
In particular, $H(u)\equiv H_0(u)$ is completely determined 
if $\Phi_m$ is known:
$$
H(u)= \sum_{j=0}^m \Phi_j(v) A(u)^j.
$$
The series   $\Phi_k(v)\equiv \Phi_k(z;v)$  can be computed by a
descending induction on $k$ as follows. Let us denote by
$\Phi_{k-1}^>(u)$ the positive part in $u$ of $\Phi_{k-1}(v)$, that is
$$
\Phi_{k-1}^>(u):=  [u^{> }]\Phi_{k-1}(\bu^m(1+u)^{m+1}).
$$
Then for $1\le k\le m$, this series can be expressed in terms of
$\Phi_k, \ldots, \Phi_m$:
\begin{eqnarray}
\Phi_{k-1}^>(u)&=&
-\frac 1{{m\choose k} }[u^{>}] \left(\sum_{j=k}^m \Phi_j(v) \sum_{\lambda \vdash j-k+1} {m-\ell(\lambda)
  \choose k -\ell(\lambda)} m_\lambda(A_1, \ldots, A_m)\right).\label{phi-rec}
\end{eqnarray}
The extraction makes sense since, as will be seen, $vm_\lambda(A_1,
\ldots, A_m)$  belongs to $\GK[u,\bu][[z]]$.
Finally, $\Phi_{k-1}(v)$ can be expressed in terms of $\Phi_{k-1}^>$: 
\beq\label{Phi-k}
\Phi_{k-1}(v)= \sum_{i=0}^m \left(  \Phi_{k-1}^>(u_i)-  \Phi_{k-1}^>(-1)\right).
\eeq
\end{Proposition}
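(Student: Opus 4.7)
My plan is descending induction on $k$. The conceptual heart of the argument is that $H_k(u_V)$ is, by definition, the divided difference of $H$ viewed as a function of the variable $A$: on the fiber $\{u_0,\dots,u_m\}$ (where $v$ is constant) one sets $\tilde H(A_i):=H(u_i)$, so that $H_k(u_V) = \tilde H[A_V]$. Claiming \eqref{eq-sym-k} at level $k=0$ amounts to asserting that $\tilde H$ extends to a polynomial $P(X)=\sum_{j=0}^m \Phi_j(v) X^j$ of degree $\leq m$; the identity \eqref{eq-sym-k} at general $k$ then follows from the classical formula $P[x_0,\dots,x_k] = \sum_{j\geq k}[X^j]P \cdot h_{j-k}(x_0,\dots,x_k)$ expressing the divided differences of a polynomial via complete homogeneous symmetric functions.

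The base case $k=m$ is precisely the hypothesis \eqref{eq-sym} defining $\Phi_m$. For the inductive step, assume $\Phi_k,\dots,\Phi_m$ are constructed and \eqref{eq-sym-k} holds at levels $\geq k$. I would extract $\Phi_{k-1}$ by summing the targeted \eqref{eq-sym-k} at level $k-1$ over the $k$-subsets $V$ of $\{1,\dots,m\}$ (those excluding the distinguished index~$0$). Using $h_r=\sum_{\lambda\vdash r} m_\lambda$ together with the elementary counting identity $\sum_V m_\lambda(A_V) = \binom{m-\ell(\lambda)}{k-\ell(\lambda)} m_\lambda(A_1,\dots,A_m)$ (each monomial is counted once for every $V$ containing its support), one obtains
\begin{equation*}
\binom{m}{k}\Phi_{k-1}(v) = \sum_V H_{k-1}(u_V) - \sum_{j=k}^m \Phi_j(v) \sum_{\lambda \vdash j-k+1} \binom{m-\ell(\lambda)}{k-\ell(\lambda)} m_\lambda(A_1,\dots,A_m),
\end{equation*}
and taking $[u^>]$ produces the recursive formula \eqref{phi-rec}, provided that $[u^>]\sum_V H_{k-1}(u_V)=0$.

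This vanishing is the main technical obstacle. It should follow from the fact that $\sum_V H_{k-1}(u_V)$ is symmetric in $u_1,\dots,u_m$ (and in the $A_i$ for $i\geq 1$, with Lemma~\ref{lem:Hk} ensuring polynomial dependence after clearing the divisors $A_i - A_j$), hence a polynomial in the elementary symmetric functions $e_r(u_1,\dots,u_m)$. These are computed from the factorization $\prod_{i=1}^m(U-u_i) = \bigl[u^m(1+U)^{m+1} - (1+u)^{m+1}U^m\bigr]/[u^m(U-u)]$: polynomial long division shows each coefficient of the quotient, and hence each $e_r(u_1,\dots,u_m)$, is a Laurent polynomial in $u$ whose powers lie only in $\{-m,\dots,0\}$. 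Consequently, any symmetric polynomial in $u_1,\dots,u_m$ has no positive part in $u$.

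Finally, formula \eqref{Phi-k} recovers $\Phi_{k-1}(v)$ from $\Phi_{k-1}^>$. Since $\Phi_{k-1}(v(u_i))=\Phi_{k-1}(v)$ is constant on the fiber, $\sum_{i=0}^m \Phi_{k-1}(v(u_i)) = (m+1)\Phi_{k-1}(v)$; splitting each summand into its positive and non-positive parts in $u_i$, and observing that the non-positive part $\Phi_{k-1}(v)-\Phi_{k-1}^>(u)$ evaluated at $u_i$ sums symmetrically to a tractable expression yielding $(m+1)\Phi_{k-1}^>(-1)$, produces \eqref{Phi-k}. The hypothesis $H(-1)=0$ ensures $\Phi_{k-1}(0)=0$ (all $u_i$ collapse to $-1$ at $v=0$, so the interpolation polynomial on the degenerate fiber must vanish identically), confirming $\Phi_{k-1}\in v\mathbb{L}[v][[z]]$ and closing the induction.
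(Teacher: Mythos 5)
Your conceptual framing via the Newton interpolating polynomial is elegant and gives a clean route to~\eqref{eq-sym-k}; the symmetrization over $k$-subsets of $\llbracket 1,m\rrbracket$, together with the observation that $[u^>]$ annihilates symmetric polynomials in $u_1,\dots,u_m$, is indeed how the paper derives~\eqref{phi-rec}. But there is a genuine gap, and it is the hardest part of the proof: you never establish that $\Phi_{k-1}$ has coefficients in $v\GL[v]$, i.e.\ that it is a \emph{polynomial} in $v$ rather than merely a rational function. This is indispensable at two points. Without it, $\Phi_{k-1}(v)$ is not known to be a Laurent polynomial in $u$, so taking $[u^>]$ of your symmetrized identity does not even parse, and~\eqref{phi-rec} is meaningless. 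It is also a hypothesis of the reconstruction lemma behind~\eqref{Phi-k} (the paper's Lemma~\ref{P-reconstruct} is stated for polynomials $P$). The subtlety is that the Newton coefficients involve the $A_i=\frac{u_i}{1+u_i}e^{-K(u_i)}$, and each $\frac{1}{1+u_i}$ produces a potential pole at $v=0$. The paper rules this out by symmetrizing~\eqref{eq-sym-k-1} over \emph{all} of $u_0,\dots,u_m$ and then expanding near $u=-1$ using $1+u_i\sim\xi^i(1+u)$: the zero of order $m+1$ in $(1+u)$ coming from the factor of $v$ inside $\Phi_j(v)$ beats the pole of order $j-k+1\le m$ coming from $h_{j-k+1}(A_V)$. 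Your closing remark that $\Phi_{k-1}(0)=0$ ``by the degenerate fiber'' addresses the vanishing at $v=0$ assuming there is no pole there, which is precisely what remains to be shown.

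Your sketch of~\eqref{Phi-k} is also not quite an argument: writing $(m+1)\Phi_{k-1}(v)=\sum_i\Phi_{k-1}^>(u_i)+\sum_i\Phi_{k-1}^{\le 0}(u_i)$ and asserting that the second sum ``sums symmetrically to a tractable expression yielding $(m+1)\Phi_{k-1}^>(-1)$'' does not produce the claimed identity without further work (indeed what one would need is $\sum_i\Phi_{k-1}^{\le0}(u_i)=m\Phi_{k-1}(v)-(m+1)\Phi_{k-1}^>(-1)$, which is not obvious from your decomposition). The paper's Lemma~\ref{P-reconstruct} proves this differently and more cleanly: the right-hand side of~\eqref{Phi-k}, being symmetric in $u_0,\dots,u_m$, is a polynomial $\tilde P(v)$ whose positive part in $u$ equals $\Phi_{k-1}^>(u)$; so $\Phi_{k-1}-\tilde P$ is a polynomial in $v$ of non-positive degree in $u$, hence constant, and evaluating at $u=-1$ (equivalently $v=0$) shows the constant is zero. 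Note that this argument too requires the polynomiality in $v$ that your proposal leaves unproved.
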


We first establish three lemmas dealing with  symmetric functions of
the series $u_i$ defined  in Lemma~\ref{lem:ui}.
\begin{Lemma}\label{lem:elem}
  The elementary symmetric functions of $u_0=u, u_1, \ldots, u_m$ are 
$$
e_{j}(u_0,u_1, \ldots, u_m)= 
(-1)^{j}{m+1\choose j} + v{\mathbbm 1}_{j=1}
$$
with $v=u^{-m}(1+u)^m$.

The elementary symmetric functions of $u_1, \ldots, u_m$ are
$$
e_{m-j}(u_1, \ldots, u_m)=
\left\{
\begin{array}{ll}
1 & \hbox{if } j=m,
\\
(-1)^{m-j-1}\sum_{p=0}^j{m+1 \choose p} u^{p-j-1}
& \hbox{otherwise}.
\end{array}
\right.
$$
In particular, they are polynomials in $1/u$, and so is any symmetric
polynomial in $u_1, \ldots, u_m$.

Finally,
$$
\prod_{i=0}^m(1+u_i)=v.
$$
\end{Lemma}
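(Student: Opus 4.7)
The plan is to extract all the required symmetric functions from Vieta's formulas applied to the monic polynomial $P(U) := (1+U)^{m+1} - v U^m$, whose roots are precisely $u_0, u_1, \ldots, u_m$ by Lemma~\ref{lem:ui} and which has degree $m+1$. Expanding $(1+U)^{m+1}$ via the binomial theorem, the coefficient of $U^{m+1-j}$ in $P$ equals $\binom{m+1}{m+1-j} = \binom{m+1}{j}$ for $j \neq 1$, and equals $(m+1) - v$ for $j=1$. Since by Vieta this coefficient also equals $(-1)^j e_j(u_0,\ldots,u_m)$, one reads off the first formula of the lemma immediately.

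For the elementary symmetric functions of $u_1, \ldots, u_m$ alone, I would apply the standard splitting relation
$$e_j(u_0, u_1, \ldots, u_m) = e_j(u_1, \ldots, u_m) + u \cdot e_{j-1}(u_1, \ldots, u_m),$$
and iterate it (starting from $e_0 = 1$) to obtain the closed form
$$e_{j}(u_1, \ldots, u_m) = \sum_{i=0}^{j} (-u)^{j-i} e_i(u_0, \ldots, u_m).$$
Substituting the first formula of the lemma and expanding the lone $v$-term via $v = (1+u)^{m+1} u^{-m}$, one obtains two sums in powers of $u$. A short check using the binomial symmetry $\binom{m+1}{k} = \binom{m+1}{m+1-k}$ shows that the non-negative powers of $u$ in these two sums cancel exactly, so only the strictly negative powers of $u$ coming from the $v$-contribution survive; a re-indexation (together with the substitution $j \mapsto m-j$) yields the stated formula. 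The polynomiality in $1/u$ of any symmetric polynomial in $u_1,\ldots, u_m$ then follows from the fundamental theorem of symmetric functions, since each elementary $e_k(u_1,\ldots,u_m)$ is itself a polynomial in $1/u$.

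For the identity $\prod_{i=0}^m(1+u_i)=v$, I would simply evaluate $P$ at $U=-1$. On one hand, $P(-1) = 0 - v(-1)^m = (-1)^{m+1} v$; on the other, writing $P(U) = \prod_i(U - u_i)$ gives $P(-1) = (-1)^{m+1}\prod_i(1+u_i)$. Comparing the two expressions yields the identity at once.

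The only delicate step I foresee is the cancellation of non-negative powers of $u$ in the derivation of $e_{j}(u_1,\ldots,u_m)$: once the indices are aligned with the binomial symmetry, the cancellation is transparent, but careful bookkeeping of the summation bounds and signs is required. All remaining computations are routine applications of Vieta's formulas and the binomial theorem.
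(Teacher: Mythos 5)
Your treatments of the first formula (Vieta on $P(U)=(1+U)^{m+1}-vU^m$) and of $\prod_i(1+u_i)=v$ (evaluate $P$ at $U=-1$) are exactly what the paper does. For the second formula, however, you take a genuinely different route. The paper performs the polynomial division explicitly, computing
$$\frac{(1+u_i)^{m+1}u^m -(1+u)^{m+1}u_i^m}{u_i-u}
= u^m u_i^m -\sum_{j=0}^{m-1} u_i^j u^{m-j-1} \sum_{p=0}^j{m+1 \choose p} u^{p},$$
which is the degree-$m$ polynomial in $u_i$ vanishing on $u_1,\ldots,u_m$, and reads off the elementary symmetric functions directly from its coefficients. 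You instead iterate the splitting identity $e_j(u_0,\ldots,u_m) = e_j(u_1,\ldots,u_m) + u\,e_{j-1}(u_1,\ldots,u_m)$ to write $e_j(u_1,\ldots,u_m) = \sum_{i=0}^{j}(-u)^{j-i}e_i(u_0,\ldots,u_m)$, then substitute the first formula, expand the $v$-term binomially, and exploit $\binom{m+1}{k}=\binom{m+1}{m+1-k}$ to cancel the nonnegative powers of $u$. I have checked that the bookkeeping does work out: after the substitution $i\mapsto m+1-p$ in the $v$-contribution, the two sums agree for $0\le i\le j$, leaving $(-1)^{j+1}\sum_{i=j+1}^{m+1}\binom{m+1}{i}u^{j-i}$, which becomes the stated formula after $p=m+1-i$ and $j\mapsto m-j$. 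Both approaches rest ultimately on dividing out the root $u_0=u$; the paper's direct division is more compact, while yours makes the symmetry $\binom{m+1}{k}=\binom{m+1}{m+1-k}$ do the work explicitly and may feel more structural. Your concluding appeal to the fundamental theorem of symmetric functions for the polynomiality in $1/u$ is also the paper's (implicit) reasoning. The proof is correct.
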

\begin{proof}
The symmetric functions of the roots of a polynomial can be read from
the coefficients of this polynomial. Hence the first result follows
directly from the equation satisfied by the  
$u_i$'s, for $0\le i\le m$, namely  
$$
(1+u_i)^{m+1}=v u_i^m.
$$
 For the second one, we need to find the equation satisfied by $u_1, \ldots,
u_m$, which is 
  $$
0= \frac{(1+u_i)^{m+1}u^m -(1+u)^{m+1}u_i^m}{u_i-u}=
 u^m u_i^m -\sum_{j=0}^{m-1} u_i^j u^{m-j-1} \sum_{p=0}^j{m+1 \choose p} u^{p}.
$$
The second result follows.

The third one is obtained by evaluating at $U=-1$
the identity
$$
\prod_{i=0}^{m}(U-u_i)=(1+U)^{m+1}-v U^m.
$$
\end{proof}
\begin{Lemma}\label{P-reconstruct}
  Denote $v=\bu^m(1+u)^{m+1}$. Let $P$ be a polynomial. Then $P(v)$ is a
  Laurent polynomial in $u$. Let $P^>(u):=[u^>]P(v)$ denote its positive part.
Then
\beq\label{P-dev}
P(v)= P(0)+\sum_{i=0}^m (P^>(u_i)-P^>(-1)).
\eeq
\end{Lemma}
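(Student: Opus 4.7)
The plan is to prove the identity by an inversion argument: the map $P\mapsto P^{>}$ from $v\GK[v]$ to $u\GK[u]$ is a bijection, and the right-hand side of the identity exhibits its inverse. By $\GK$-linearity, writing $P=P(0)+\tilde P$ with $\tilde P\in v\GK[v]$ and noting that $P^{>}=\tilde P^{>}$, the constant $P(0)$ cancels on both sides, so it suffices to prove
\[
 P(v)=\sum_{i=0}^m\bigl(P^{>}(u_i)-P^{>}(-1)\bigr) \quad \text{for all } P\in v\GK[v].
\]

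Introduce the $\GK$-linear maps $\Phi\colon v\GK[v]\to u\GK[u]$, $P(v)\mapsto P^{>}(u)$, and $\Psi\colon u\GK[u]\to\GK[v]$, $Q(u)\mapsto \sum_{i=0}^m\bigl(Q(u_i)-Q(-1)\bigr)$. The explicit computation
\[
  \Phi(v^k)=[u^{>}]\,\bu^{mk}(1+u)^{(m+1)k}= u^k+\sum_{\ell=1}^{k-1}\binom{(m+1)k}{mk+\ell}u^\ell
\]
shows that $\Phi$ is triangular with unit diagonal, hence a bijection. To see that $\Psi$ lands in $v\GK[v]$, I would argue as follows: the sum $\sum_i Q(u_i)$ is symmetric in $u_0,\dots,u_m$, so by Lemma~\ref{lem:elem} and the fundamental theorem of symmetric polynomials it is a polynomial in $v$; moreover at $v=0$ the defining equation $(1+U)^{m+1}=vU^m$ collapses to $(1+U)^{m+1}=0$, whose only root is $-1$, so every elementary symmetric function of the $u_i$'s specializes to its value at $(-1,\dots,-1)$, giving $\sum_i Q(u_i)|_{v=0}=(m+1)Q(-1)$ and hence $\Psi(Q)(0)=0$.

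The key step is showing that $\Phi\circ\Psi=\id$ on $u\GK[u]$. Splitting
\[
 \Psi(Q)=Q(u)+R(u)-(m+1)Q(-1),\qquad R(u):=\sum_{i=1}^m Q(u_i),
\]
one extracts $[u^{>}]$: since $R(u)$ is symmetric in just $u_1,\dots,u_m$, Lemma~\ref{lem:elem} gives $R\in\GK[\bu]$, so $[u^{>}]R(u)=0$; the constant $(m+1)Q(-1)$ contributes nothing; and $[u^{>}]Q(u)=Q(u)$ since $Q\in u\GK[u]$. Hence $\Phi(\Psi(Q))=Q$, and the injectivity of $\Phi$ on $v\GK[v]$ upgrades this to $\Psi\circ\Phi=\id$ there, which is precisely the identity to be proved. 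I expect the only real subtlety to be the two-sided use of Lemma~\ref{lem:elem}: symmetric functions of \emph{all} $u_0,\dots,u_m$ lie in $\GK[v]$ (so that $\Psi(Q)$ is a polynomial in $v$), whereas symmetric functions of \emph{only} $u_1,\dots,u_m$ lie in $\GK[\bu]$ (needed to kill the positive part of $R$); no explicit computation of power sums of the $u_i$'s is required.
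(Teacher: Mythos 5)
Your proof is correct, and it relies on the same two key facts from Lemma~\ref{lem:elem} that the paper's proof uses: that a symmetric function of \emph{all} the $u_i$'s is a polynomial in $v$, and that a symmetric function of $u_1,\dots,u_m$ only is a polynomial in $\bar u$ (hence has vanishing positive part in $u$), together with the degeneration $u_i\to -1$ at $v=0$ and the observation that $\deg_u Q(v)=\deg_v Q$. The only real difference is organizational: you package the argument as two mutually inverse $\GK$-linear maps $\Phi$ and $\Psi$, proving $\Phi\circ\Psi=\id$ and then inverting via injectivity of $\Phi$, whereas the paper sets $\tilde P:=\mathrm{RHS}$, checks directly that $[u^>]\tilde P(v)=P^>(u)$, and concludes $P=\tilde P$ by the degree argument plus evaluation at $v=0$. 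Your framing is marginally longer (the explicit triangularity of $\Phi(v^k)$ and surjectivity of $\Phi$ are not needed — injectivity alone suffices, and it follows from the same degree consideration the paper uses), but it has the mild virtue of making explicit that $P\mapsto P^>$ and $Q\mapsto \sum_i(Q(u_i)-Q(-1))$ are genuine inverse bijections between $v\GK[v]$ and $u\GK[u]$, which is a slightly stronger statement than the lemma itself.
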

\begin{proof}
The right-hand side of~\eqref{P-dev} is a symmetric polynomial of
$u_0, \ldots, u_m$, and thus, by the first part of
Lemma~\ref{lem:elem}, a polynomial in $v$. Denote it by $\tilde
P(v)$. The second part of
Lemma~\ref{lem:elem} implies that the positive part of $\tilde
P(v)$ in $u$ is
$P^>(u_0)=P^>(u)$. That is, $P(v)$ and $\tilde P(v)$ have the same 
positive part in $u$. In other words, the polynomial
$Q:=P-\tilde P$ is such that $Q(v)$ is a Laurent polynomial in $u$ of
non-positive degree. But since $v=(1+u)^{m+1} \bu^m$, the degree in
$u$ of $Q(v)$ 
coincides with the degree of $Q$, and so $Q$  must be a
constant. Finally, by setting $u=-1$ in $\tilde P(v)$, we see that
$\tilde P(0)= P(0)$ (because  $u_i=-1$ for all $i$ when
$u=-1$, as follows for instance from Lemma~\ref{lem:elem}). Hence
$Q=0$ and the lemma is proved. 
\end{proof}
\begin{Lemma}\label{lem:sym}
  Let $0\le k\le m$, and let $R(x_0, \ldots, x_k)$ be a symmetric
  rational function   of $k+1$ variables $x_0, \ldots, x_k$, such that
  for any  
subset $V$ of $\llbracket 0, k\rrbracket $ of cardinality $k+1$,
$$
R(u_V
)=R(u_0, \ldots, u_k) .
$$
Then there exists a rational fraction in $v$ equal
to  $R(u_0, \ldots, u_k)$.
\end{Lemma}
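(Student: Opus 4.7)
The plan is to symmetrize $R$ over all $(k+1)$-element subsets of $\{u_0, \ldots, u_m\}$ and then invoke the fundamental theorem on symmetric functions. Define
$$
S := \sum_{\substack{V \subseteq \llbracket 0,m\rrbracket \\ |V|=k+1}} R(u_V).
$$
By the hypothesis that $R(u_V)=R(u_0,\ldots,u_k)$ for every such $V$, every summand equals $R(u_0,\ldots,u_k)$, so $S = \binom{m+1}{k+1}\, R(u_0,\ldots,u_k)$. It therefore suffices to prove that $S$ is a rational function of $v$.

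Since $R$ is symmetric in its $k+1$ arguments, the sum $S$ is visibly invariant under \emph{any} permutation of $u_0,\ldots,u_m$: permuting the indices merely permutes the set of $(k+1)$-subsets $V$. Writing $R=P/Q$ with $P,Q$ symmetric polynomials, one sees that $S$ is a symmetric rational function in $u_0,\ldots,u_m$. By the fundamental theorem on symmetric rational functions, $S$ can be expressed as a rational function of the elementary symmetric polynomials $e_1(u_0,\ldots,u_m),\ldots,e_{m+1}(u_0,\ldots,u_m)$. Lemma~\ref{lem:elem} tells us that each of these is a polynomial in $v$, so $S$ is a rational function of $v$, and hence so is $R(u_0,\ldots,u_k)$.

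The only delicate point to address is the well-definedness of the intermediate expressions: the $u_i$ are specific algebraic elements, not indeterminates, so in principle the rational expression in the $e_j$'s could have a denominator vanishing upon evaluation. However, Lemma~\ref{lem:ui} guarantees that $u_0,\ldots,u_m$ are distinct, and the resulting field extension of $\qs(v)$ is Galois with Galois group a subgroup of the symmetric group $\mathfrak{S}_{m+1}$ acting by permutation of roots; the element $R(u_0,\ldots,u_k)$, being $\mathfrak{S}_{m+1}$-invariant by the argument above, lies in the fixed field $\qs(v)$. This Galois-theoretic viewpoint gives the cleanest way to dispatch the well-definedness concern, and I expect this to be the only subtle step. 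The main obstacle is thus essentially bookkeeping rather than a genuine difficulty.
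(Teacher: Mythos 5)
Your main argument — averaging $R$ over the $(k+1)$-subsets of $\{u_0,\ldots,u_m\}$ and invoking the fundamental theorem of symmetric functions together with Lemma~\ref{lem:elem} — is exactly the paper's proof. The one place where you are slightly loose, and the paper is careful, is that the paper first defines $\tilde R(x_0,\ldots,x_m)$ as a rational function of \emph{indeterminates}, expresses it there as $S(e_1,\ldots,e_{m+1})$, and only then specializes $x_i\mapsto u_i$; this is precisely what disposes of the well-definedness concern you raise at the end. Your Galois-theoretic remark is correct and in fact gives a genuinely shorter route that bypasses the symmetrization entirely: by Lemma~\ref{lem:ui} the polynomial $(1+U)^{m+1}-vU^m$ is separable, so the splitting field $\GK(u_0,\ldots,u_m)$ is Galois over $\GK(v)$ with Galois group a subgroup of $\Sn_{m+1}$ acting by permuting roots; the symmetry of $R$ together with the hypothesis $R(u_V)=R(u_0,\ldots,u_k)$ says exactly that $R(u_0,\ldots,u_k)$ is $\Sn_{m+1}$-invariant, hence fixed by the Galois group, hence an element of $\GK(v)$ — and one never needs Lemma~\ref{lem:elem}. (A small remark: the statement as printed has $V\subset\llbracket 0,k\rrbracket$, which is vacuous; this is a typo for $\llbracket 0,m\rrbracket$, and you correctly read it that way.)
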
 
\begin{proof} 
  Let $\tilde R$ be the following rational function in $x_0, \ldots, x_m$:
$$
\tilde R (x_0, \ldots, x_m)= 
\frac 1 {{{m+1 }\choose {k+1}}}\sum_{V \subset \llbracket 0, m\rrbracket, \
  |V|=k+1} R(x_V).
$$ 
Then $\tilde R$ is a symmetric function of $x_0, \ldots, x_m$, and
hence a rational function in the elementary symmetric functions
$e_j(x_0, \ldots, x_m)$, say $S(e_1(x_0, \ldots, x_m),  \ldots, e_{m+1}(x_0, \ldots, x_m))$. 
By assumption, 
$$
\tilde R (u_0, \ldots, u_m)=S(e_1(u_0, \ldots, u_m),  \ldots,
e_{m+1}(u_0, \ldots, u_m))= R(u_0, \ldots, u_k). 
$$
  Since $S$ is a rational function, it follows from the first part of
  Lemma~\ref{lem:elem} that $ R(u_0, 
  \ldots, u_k)$ can be written as a rational function in $v$.
\end{proof}

\begin{proof}[Proof of Proposition~{\rm\ref{prop:extraction}}]
  We prove~\eqref{eq-sym-k} by descending induction on $k$. For
  $k=m$, it holds by assumption.
Let us assume that~\eqref{eq-sym-k}  holds for some $k>0$, and
  prove it for $k-1$.

Observe that
$$
(A(x_{k-1})-A(x_k))H_k(x_0, \ldots, x_k)= H_{k-1}(x_0, \ldots, x_{k-2}, x_{k-1})-
H_{k-1}(x_0, \ldots, x_{k-2}, x_{k}).
$$
This is easily proved by collecting the coefficient of $H(x_i)$, for
all $i\in\llbracket 0, k\rrbracket$, in both sides of the equation. We also have, for any
indeterminates $a_0, \ldots, a_m$,
$$
(a_{k-1}-a_k)h_{j-k}(a_0, \ldots, a_k)= h_{j-k+1}(a_0, \ldots,
a_{k-2},a_{k-1})-h_{j-k+1}(a_0, \ldots, a_{k-2},a_{k}).
$$
Let $V$ be a subset of $\llbracket 0,m\rrbracket$ of cardinality
$k-1$, and let $p$ and $q$ be two elements of $\llbracket
0,m\rrbracket\setminus V$. Multiplying~\eqref{eq-sym-k} by 
$A_p-A_q$, 
and using the two equations above 
gives
$$
  H_{k-1}(
u_V,u_p)
-\sum_{j=k}^m \Phi_j(v) h_{j-k+1}(
A_V,A_p)
=\\
H_{k-1}(
u_V,u_q)
-\sum_{j=k}^m \Phi_j(v) h_{j-k+1}(
A_V,A_q).
$$ 
This implies that the series
$$
H_{k-1}(x_0, \ldots,  x_{k-1})-\sum_{j=k}^m \Phi_j(v)
h_{j-k+1}(A(x_0), \ldots,A(x_{k-1})) 
$$
takes the same value at all points 
$u_V$, for $V\subset \llbracket 0,m\rrbracket$ of cardinality $k$.
Hence  Lemma~\ref{lem:sym}, applied to the coefficients of this
series, implies that there exists a series in $z$ with \emm rational, 
coefficients in $v$, denoted $\Phi_{k-1}(v)$, such that for all
$V \subset \llbracket 0,m\rrbracket$ with $|V|=k$:
\beq\label{eq-sym-k-1}
H_{k-1}(
u_V)
-\sum_{j=k}^m \Phi_j(v) h_{j-k+1}(
A_V)= \Phi_{k-1}(v).
\eeq
This is exactly~\eqref{eq-sym-k} with $k$ replaced by $k-1$.

\medskip
The next point we will prove is that the coefficients of
$\Phi_{k-1}$ belong to $v \GL[v]$. In order to do so, we
symmetrize~\eqref{eq-sym-k-1} over $u_0, \ldots, u_m$.
By~\eqref{eq-sym-k-1},
\beq\label{e-neg:a}
{m+1\choose k} \Phi_{k-1}(v)
=\sum_{V\subset \llbracket 0, m\rrbracket, |V|=k} H_{k-1}(u_V)-\sum_{j=k}^m
\left(\Phi_j(v) \sum_{V\subset \llbracket 0, m\rrbracket, |V|=k}h_{j-k+1}(A_V) \right).
\eeq
We will prove that both sums in the right-hand side of this equation
are series in $z$ with coefficients in $v\GL[v]$.

By Lemma~\ref{lem:Hk},
$$
\sum_{V\subset \llbracket 0, m\rrbracket, |V|=k} H_{k-1}(x_V)
$$
is a series in $z$ with polynomial
coefficients in $x_0, \ldots, x_m$, which is symmetric in these
variables. By Lemma~\ref{lem:elem}, the first sum
in~\eqref{e-neg:a} is thus 
a series in $z$ with \emm polynomial, coefficients in $v$. We still
need to prove that this series  vanishes at $v=0$, that is, at
$u=-1$. But this follows from the second part of Lemma~\ref{lem:Hk},
since $u_i=-1$ for all $i$ when $u=-1$.

 Let us now consider the second sum in~\eqref{e-neg:a}, and more
specifically the term
\beq\label{second-sum:a}
\Phi_j(v)  \sum_{V\subset  \llbracket 0,  m\rrbracket, |V|=k}h_{j-k+1}(A_V) .
\eeq
Recall that $$
A_i= \frac{u_i}{1+u_i}\, {e^{-K(u_i)}}.
$$
But by Lemma~\ref{lem:elem},
$$
\frac 1 {1+u_i}= \frac 1 v \prod_{0\le j \not = i \le m} (1+u_j).
$$
Hence~\eqref{second-sum:a} can be written as a series in $z$ with
coefficients in $\GL[1/v, u_0, \ldots, u_m]$, symmetric in
$u_0, \ldots, u_m$. By the first part of Lemma~\ref{lem:elem}, these
coefficients belong to 
$\GL[v, 1/v]$. We want to prove that they actually
belong to $v\GL[v]$, that is, that they are not singular at
$v=0$ (equivalently, at $u=-1$) and even vanish at this point.
From the equation $(1+u_i)^{m+1}= vu_i^m$, it follows that
we can label $u_1, \ldots, u_m$ in such a way that
$$
1+u_i= \xi^i(1+u)+o(1+u),
$$
where $\xi$ is a primitive $(m+1)^{\hbox{\small st}}$ root of unity. Since
$\Phi_j(v)$ is a multiple of $v=\bu^m(1+u)^{m+1}$, and the
symmetric function $h_{j-k+1}$ has degree $j-k+1 \le m$, it follows
that the series~\eqref{second-sum:a} is not singular at $u=-1$, and even
vanishes at this point. Hence its coefficients  belong to $v\GL[v]$.

\medskip
So far, $\Phi_{k-1}(v)$ has been expressed in terms of $H$ (and the
series $\Phi_j$), and we 
now want to obtain an  expression in terms of the $\Phi_j$ only. 
Lemma~\ref{P-reconstruct}, together with $\Phi_{k-1}(0)=0$,
establishes~\eqref{Phi-k}. To express $\Phi^>_{k-1}(u)$, we now
symmetrize~\eqref{eq-sym-k-1} over $u_1, \ldots, u_m$. With the
above notation, 
\beq\label{e-neg}
{m\choose k} \Phi_{k-1}(v)=
\sum_{V\subset  \llbracket 1,  m \rrbracket , |V|=k} H_{k-1}(u_V)-\sum_{j=k}^m
\left(\Phi_j(v) \sum_{V\subset  \llbracket 1, m\rrbracket, |V|=k}h_{j-k+1}(A_V)
\right).
\eeq
As above,
$$
\sum_{V\subset \llbracket 1, m\rrbracket, |V|=k} H_{k-1}(x_V)
$$
is a series in $z$ with polynomial
coefficients in $x_1, \ldots, x_m$, which is symmetric in these
variables. By the second part of Lemma~\ref{lem:elem}, the first sum in~\eqref{e-neg} is thus
a series in $z$ with polynomial coefficients in $1/u$. Since
$\Phi_{k-1}(v)$ has coefficients in $\GL[v]$, and hence in
$\GL[u,1/u]$, the second sum in~\eqref{e-neg} is also a zeries in
$z$ with coefficients in $\GL[u,1/u]$. We can now extract from~\eqref{e-neg} 
 the positive part in $u$, and this gives
$$
{m\choose k} \Phi^>_{k-1}(u)= -[u^{>}]\left(\sum_{j=k}^m
\left(\Phi_j(v) \sum_{V\subset \llbracket 1,  m\rrbracket, |V|=k}h_{j-k+1}(A_V)
\right)\right).
$$
One easily checks that, for indeterminates $a_1, \ldots, a_m$, 
$$
\sum_{V\subset \llbracket 1,  m\rrbracket, |V|=k}h_{j-k+1}(a_V)=
 \sum_{\lambda \vdash j-k+1} {m-\ell(\lambda)
  \choose k -\ell(\lambda)} m_\lambda(a_1, \ldots, a_m),
$$
so that the above expression of $\Phi^>_{k-1}(u)$ coincides
with~\eqref{phi-rec}. 
\end{proof}

\subsection{The case $y=1$}
As explained in Section~\ref{sec:principle},
Theorem~\ref{thm:main} will be proved if we establish
$\tG(u,1)=G_1(u)$, where  
$$ 
G_1(u)= (1+\bu)e^{K(u)+L} \left((1+u)e^{-mK(u)} - 1\right).
$$ 
A natural attempt would be to set $y=1$ in the expression of
$\tG(u,y)$ that can be derived from Proposition~\ref{prop:extraction},
as we did when $m=1$ in Section~\ref{sec:m=y=1}. However, we have not been able
to do so, and will proceed differently.

We have proved in Proposition~\ref{prop:combi-lin-m} that the series
$\tG(u,y)$ 
satisfies~\eqref{eq-sym} with 
$\Phi_m(v)=v e^{yV(v)}$.
 In particular, $\tG(u,1)$
satisfies~\eqref{eq-sym} with $\Phi_m(v)=v e^{V(v)}$. By
Proposition~\ref{prop:extraction}, this 
equation, together with the initial condition $\tG(-1,1)=0$,
characterizes $\tG(u,1)$. 
It is clear that  $ G_1(-1)=0$. Hence it suffices  to prove  the
following proposition. 

\begin{Proposition}
  The series $G_1(u)$ satisfies~\eqref{eq-sym}  with $\Phi_m(v)=v e^{V(v)}$. 
\end{Proposition}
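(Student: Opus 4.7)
The plan is to verify \eqref{eq-sym} for $G_1$ directly, using the Lagrange identities of Lemma~\ref{lemma:Lagrange} and a generating-function argument. First I would rewrite $G_1$ in a convenient form. From $A(u)=\frac{u}{1+u}e^{-K(u)}$ we have $vA(u)^m=(1+u)e^{-mK(u)}$ and $1/A(u)=\frac{1+u}{u}e^{K(u)}$, hence
\[
G_1(u) = \frac{e^L}{A(u)}\bigl((1+u)e^{-mK(u)}-1\bigr) = e^L\bigl(vA(u)^{m-1}-1/A(u)\bigr).
\]
Since $A_i^{m-1}$ has degree $<m$ in $A_i$, identity \eqref{eq:Lagrange-poly} annihilates its contribution in $\sum_i G_1(u_i)/\prod_{j\neq i}(A_i-A_j)$, while \eqref{eq:Lagrange-inv-bis} reduces the remaining piece to $(-1)^m/\prod_i A_i$. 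Using $\prod_i(1+u_i)=v$ (Lemma~\ref{lem:elem}) together with $\prod_i u_i=(-1)^{m+1}$ (read off from the constant term of $\prod_i(U-u_i)=(1+U)^{m+1}-vU^m$), I would get $\prod_i A_i=\frac{(-1)^{m+1}}{v}e^{-\sum_i K(u_i)}$, so that the LHS of \eqref{eq-sym} equals $v\,e^{L+\sum_i K(u_i)}$. The whole proof is thereby reduced to the single identity $\sum_{i=0}^m K(u_i) = V(v)-L$.

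Expanding in powers of $z$, this identity amounts to the coefficient identity
\[
\sum_{j=1}^k \binom{(m+1)k}{k-j}p_j(u_0,\dots,u_m) \;=\; v^k - \binom{(m+1)k}{k} \qquad (k\ge 1),
\]
where $p_j$ is the $j$th power-sum symmetric function. To prove it, I would introduce the generating series $Q(W):=\prod_i(1-u_iW)$, which, as in the proof of Lemma~\ref{lem:ui}, equals $(1+W)^{m+1}-vW$. Logarithmic differentiation gives $\sum_i\frac{u_iW}{1-u_iW}=-WQ'(W)/Q(W)$. Substituting $\binom{(m+1)k}{k-j}=[W^{k-j}](1+W)^{(m+1)k}$, exchanging sums, and using the short factorization $Q(W)-WQ'(W)=(1+W)^m(1-mW)$, the identity reduces to
\[
[W^k]\,\frac{(1+W)^{(m+1)k+m}(1-mW)}{Q(W)} \;=\; v^k.
\]

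This last identity follows by expanding $(1+W)^{(m+1)k}=(Q(W)+vW)^k$ via the binomial theorem and telescoping the two resulting sums using $[W^{r-1}](Q^{r-1}Q')=[W^r]Q^r$ (a consequence of $(Q^r)'=rQ^{r-1}Q'$): all contributions for $r\ge1$ cancel and only the $r=0$ term survives, yielding $v^k$. The main obstacle is locating the clean coefficient reformulation displayed above; once it is identified, each of the intermediate steps (Lagrange interpolation, logarithmic differentiation of $Q$, binomial telescoping) is a short manipulation, but spotting the factorization $Q-WQ'=(1+W)^m(1-mW)$ and the subsequent telescoping structure is the heart of the argument.
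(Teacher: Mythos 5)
Your proposal is correct. Steps 1 and 2 coincide with the paper's: you rewrite $G_1(u)=e^L\bigl(vA(u)^{m-1}-1/A(u)\bigr)$, kill the $vA(u)^{m-1}$ term by \eqref{eq:Lagrange-poly}, evaluate the $1/A(u)$ piece via \eqref{eq:Lagrange-inv-bis}, and use $\prod_i(1+u_i)=v$, $\prod_i u_i=(-1)^{m+1}$ to reduce the claim to $\sum_{i=0}^m K(u_i)=V(v)-L$.

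Where you diverge is the proof of this last identity. The paper disposes of it in two lines: it invokes Lemma~\ref{P-reconstruct} (which says $P(v)=P(0)+\sum_i\bigl(P^>(u_i)-P^>(-1)\bigr)$ and applies directly to $V(v)$ since $K(u)=[u^>]V(v)$), and then computes $K(-1)=-L/(m+1)$ by a one-line binomial identity. You instead reduce the statement to the per-degree coefficient identity $\sum_{j=1}^k\binom{(m+1)k}{k-j}p_j(u_0,\dots,u_m)=v^k-\binom{(m+1)k}{k}$, prove it via the generating polynomial $Q(W)=\prod_i(1-u_iW)=(1+W)^{m+1}-vW$, logarithmic differentiation, the factorization $Q-WQ'=(1+W)^m(1-mW)$, and a telescoping extraction using $[W^{r-1}](Q^{r-1}Q')=[W^r]Q^r$. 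This is a correct and genuinely self-contained argument — it does not lean on Lemma~\ref{P-reconstruct} — but it amounts to re-deriving (in a computationally pleasant way) a special case of that lemma. The paper's route is shorter because it recycles infrastructure already built for Proposition~\ref{prop:extraction}; yours has the merit of exposing an explicit identity, and the $Q-WQ'=(1+W)^m(1-mW)$ factorization is a nice observation that does not appear in the paper.
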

\begin{proof}
First observe that 
$$ 
G_1(u) = e^{L} \left(vA(u)^{m-1}-\frac{1}{A(u)}\right).
$$ 
 Using  Lemma~\ref{lemma:Lagrange} with $x_i=A_i$, it follows that
\begin{eqnarray*}
\sum_{i=0}^m \frac{G_1(u_i)}{\prod_{j\neq i} (A_i-A_j)} &=& 
0 + (-1)^{m+1} e^{L} \prod_{i=0}^m \frac{1}{A_i}
\quad\quad\mbox{ (by }\eqref{eq:Lagrange-inv-bis}
\mbox{ and } \eqref{eq:Lagrange-poly}\mbox{)}\nonumber \\
&=&
(-1)^{m+1} e^{L+\sum_i K(u_i)} \prod_{i=0}^m \frac{(1+u_i)}{u_i}.
\end{eqnarray*}
By Lemma~\ref{lem:elem} one has $\prod_{i} (1+u_i) = v$ and $\prod_i u_i =
(-1)^{m+1}$, so it only remains to show that
$L + \sum_{i=0}^m K(u_i) = V(v)$.

Recall that $V(v)$ belongs to $v\mathbb{K}[v][ [ z ] ]$ and
that $K(u)=[u^>] V(v)$. Therefore 
Lemma~\ref{P-reconstruct} gives:
$$ 
V(v) = 0 + \sum_{i=0}^m \big( K(u_i) - K(-1) \big)
.$$
But it follows from~\eqref{eq:defK} that
$$
K(-1) = \sum_{k\geq 1} \frac{p_k}{k}z^k
\sum_{i=1}^{k} {(m+1)k\choose k-i} (-1)^{i}
= 
-\sum_{k\geq 1} \frac{p_k}{k}z^k
\frac{k}{(m+1)k} {(m+1)k\choose k}
= \frac{-L}{m+1},
$$
where we have used the identity 
$$
\sum_{i=1}^a {b \choose a-i} (-1)^i =
- {b-1 \choose a-1} = -\frac{a}{b} {b\choose a},
$$ 
valid for $b\geq a \geq 0$, which is easily proved
by induction on $a$.
Therefore  $V(v)=L+\sum_{i=0}^m K(u_i)$, 
and  the proof is complete.
\end{proof}

We have finally proved that $\tG(u,1)=G_1(u)$. As explained in
Section~\ref{sec:principle}, this implies that $F^{(m)}(x,y)=\tG(u,y)$
after the change of variables~\eqref{t-x-param}. In particular,
$F^{(m)}(x,1)=G_1(u)$, and~\eqref{Fx1} is proved. One then
obtains~\eqref{Fx1-2} in the limit $u\rightarrow 0$, using
$$
[u] K(u) = \sum_{k \geq 1} \frac{p_k}{k} 
{(m+1)k \choose k-1} 
z^k.
$$

\subsection{From series to numbers}

We now  derive from~\eqref{Fx1-2} the expression of the character
given in Theorem~\ref{thm:char}. 
We will extract from $F^{(m)}(t,p;1,1)$ the coefficient of $t^n$.
 We find convenient to rewrite the factor $e^L$ occurring in this series as
 $\tz/s$, where $s^m=t$ and $\tz=se^L$ (so that $\tz^m=z$).

Hence
\begin{eqnarray*}
  [t^n]F(t,p;1,1) 
  &=&[s^{mn+1}]\left(\tz-m\sum_{k\geq1} \frac{p_k}{k} \tz^{km+1} 
{(m+1)k\choose k-1}
\right)  \\
&=&\frac{1}{mn+1} [\tz^{mn}]
\left(1-m\sum_{k\geq1} \frac{p_k}{k}
(km+1)\tz^{km} 
{(m+1)k\choose k-1}
\right)e^{(mn+1)L}
\end{eqnarray*}
by the Lagrange inversion formula. This can be rewritten in terms of
$z=\tz ^m$:
\begin{eqnarray*}
  [t^n]F(t,p;1,1) 
 &=&\frac{1}{mn+1} [z^{n}] \left(1-m\sum_{k\geq1} {p_k}
z^{k} 
{(m+1)k\choose k}
\right)e^{(mn+1)L}.
\end{eqnarray*}
The sum inside the brackets is closely related to the derivative of
$L$ with respect to $z$: 
\begin{eqnarray*}
  [t^n]F(t,p;1,1) 
 &=&\frac{1}{mn+1} [z^{n}]
\left(1-{mz}\frac{\partial L} {\partial z}  \right)  e^{(mn+1)L}\\
&=&\frac{1}{mn+1} [z^{n}]
\left(1-\frac{mz}{mn+1}\frac{\partial }{\partial z}  \right)  e^{(mn+1)L}\\
&=&\frac{1}{mn+1} \left(1-\frac{mn}{mn+1}\right)[z^{n}]
e^{(mn+1)L}\\
&=&\frac{1}{(mn+1)^2} [z^{n}] \prod_{k\ge 1}
\exp\left((mn+1)\frac{p_k}{k}z^k
{(m+1)k\choose k}
\right)\\
&=&\frac{1}{(mn+1)^2} 
\sum_{\alpha_1+2\alpha_2+\dots =n}(mn+1)^{\sum \alpha_k}
\prod_k \frac{p_k^{\alpha_k}}{k^{\alpha_k}\alpha_k!}
{(m+1)k\choose  k}^{\alpha_k} 
\\
&=&\frac{1}{(mn+1)^2} \sum_{\la=(\la_1, \ldots) \vdash n}(mn+1)^{\ell(\la)}
\, \frac{p_\la}{z_\la}\ \prod_{i\ge 1} 
{(m+1)\la_i\choose   \la_i}.
\end{eqnarray*}
  The final equation is precisely Theorem~\ref{thm:char}.

\subsection{The complete series $F(t,p;x,y)$}
\label{sec:thm-main-y}
We finally give an explicit expression of the complete series
$F(x,y)\equiv F^{(m)}(t,p;x,y)$. 
Recall that $F(x,y)=\tG(u,y)$ after the change of
variables~\eqref{t-x-param}, and that  the series
$\tG(u,y)$ 
satisfies~\eqref{eq-sym} with $\Phi_m(v)=v e^{yV(v)}$
(Proposition~\ref{prop:combi-lin-m}).  Hence
Proposition~\ref{prop:extraction} gives an explicit, although complicated,
expression of the complete series $F(t,p;x,y)$.

\begin{Theorem}\label{thm:trivariate}
Let  $F^{(m)}(t,p;x,y)\equiv F(t,p;x,y)$ be the refined Frobenius series
of the $m$-Tamari representation, defined by~\eqref{F-def}. Let $z$ and
$u$ be two indeterminates, and write 
$$
t=z e^{-mL}
\quad \hbox{and } \quad x=({1+u})e^{-mK(u)}.
$$
where $L$ and $K(u)$ are defined by {\rm{(\ref{eq:defL}--\ref{eq:defK})}}.
Then $F(t,p;x,y)$ becomes a series in $z$ with polynomial coefficients
in $u$, $y$ and the $p_i$, and this series can be computed by an iterative
extraction of positive parts. More precisely,
\beq\label{F-complete}
F(t,p;x,y)= \sum_{k=0}^m \Phi_k(v) A(u)^k,
\eeq
where $v=u^{-m}(1+u)^{m+1}$, $A(u)$ is defined by~\eqref{A-def}, and 
   $\Phi_k(v)\equiv \Phi_k(z;v)$ is a series in $z$ with polynomial
coefficients in $v$,
$y$ and the $p_i$'s. 
This series can be computed by a
descending induction on $k$ as follows. First,
$\Phi_m(v)=ve^{yV(v)}$ where $V(v)$ is defined by
\eqref{eq:defV}. Then for $1\le k\le m$,
$$
\Phi_{k-1}(v)= \sum_{i=0}^m \left(  \Phi_{k-1}^>(u_i)-  \Phi_{k-1}^>(-1)\right)
$$
where 
\begin{eqnarray}
\Phi_{k-1}^>(u)&=&[u^>]\Phi_{k-1}(v)\nonumber\\
&=&
-\frac 1{{m\choose k} }[u^{>}] \left(\sum_{j=k}^m \Phi_j(v) \sum_{\lambda \vdash j-k+1} {m-\ell(\lambda)
  \choose k -\ell(\lambda)} m_\lambda(A(u_1), \ldots, A(u_m))\right),
\label{phik}
\end{eqnarray}
and $u_0=u, u_1, \ldots, u_m$ are the $m+1$ roots of the equation
$(1+u_i)^{m+1}=u_i^m v$. 
\end{Theorem}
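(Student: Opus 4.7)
The statement is essentially a packaging of the results already proved. The plan is to verify, in turn, three facts: (i) that $\tG(u,y)=F(t,p;x,y)$ under the change of variables \eqref{t-x-param}; (ii) that $\tG(u,y)$ satisfies the hypotheses of Proposition~\ref{prop:extraction} with $\Phi_m(v)=v e^{yV(v)}$; and (iii) that the conclusion of Proposition~\ref{prop:extraction} at level $k=0$ (with $V=\{0\}$) gives exactly~\eqref{F-complete}.

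For (i), we have shown in Section~\ref{sec:principle} that $G(z,p;u,y)=F(t,p;x,y)$ and that $G$ is characterized by~\eqref{eq:G}--\eqref{init-G}. The previous subsection established $\tG(u,1)=G_1(u)$, so the coefficient of $G(u,1)$ in the equation defining $G$ may legitimately be replaced by $G_1(u)$, which is exactly the equation~\eqref{eq:Gtilde0} defining $\tG$. Since $\tG$ and $G$ satisfy the same equation with the same initial condition, they coincide. For (ii), Proposition~\ref{prop:combi-lin-m} is precisely the statement that $\tG_m(u_0,\dots,u_m)=v e^{yV(v)}$, i.e.\ \eqref{eq-sym} holds with $\Phi_m(v)=v e^{yV(v)}$, and this series belongs to $v\GK(y)[v][[z]]$ since $V(v)\in v\GK[v][[z]]$. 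The boundary condition $\tG(-1,y)=0$ follows from the initial condition~\eqref{init:Gtilde}, because $K(u)$ has polynomial coefficients in $u$, together with the fact that $v$ vanishes at $u=-1$, so \eqref{eq:Gtilde} propagates the value $0$ at $u=-1$ by integration with respect to~$y$.

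With (i) and (ii) in hand, (iii) is immediate: Proposition~\ref{prop:extraction} asserts the existence of a unique sequence of series $\Phi_0,\dots,\Phi_{m-1}$ (together with the prescribed $\Phi_m$), with coefficients in $v\GK(y)[v][[z]]$, such that~\eqref{eq-sym-k} holds for every $k$ and every subset~$V$. Specializing to $k=0$ and $V=\{0\}$ gives
\[
\tG(u,y)=H_0(u)=\sum_{j=0}^m \Phi_j(v)\,h_j(A(u))=\sum_{j=0}^m \Phi_j(v)\,A(u)^j,
\]
which under the change of variables~\eqref{t-x-param} is exactly~\eqref{F-complete}. The recursive formulas for $\Phi_{k-1}^>$ and for $\Phi_{k-1}$ in terms of $\Phi_{k-1}^>$ are verbatim those of Proposition~\ref{prop:extraction}, and the initial value $\Phi_m(v)=v e^{yV(v)}$ is the one provided by Proposition~\ref{prop:combi-lin-m}.

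No new technical difficulty arises; the only point that needs a small verification is~(i), i.e.\ that the proof of $\tG(u,1)=G_1(u)$ legitimately promotes the equality of $\tG$ and $G$ at all $y$. This follows from the uniqueness argument given in Section~\ref{sec:principle}: both series satisfy the same linear ODE in $y$ (obtained by substituting the common value $G_1(u)=\tG(u,1)=G(u,1)$ in place of $F(x,1)$ on the right-hand side of~\eqref{eq:G}) and the same initial condition at $y=0$, hence they are equal coefficient by coefficient in~$z$.
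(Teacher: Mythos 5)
Your proposal is correct and follows the paper's own argument: the theorem is indeed just Proposition~\ref{prop:extraction} applied to $H=\tG$ with $\Phi_m(v)=ve^{yV(v)}$ (supplied by Proposition~\ref{prop:combi-lin-m}), combined with the identification $\tG=G=F$ established in Sections~\ref{sec:principle} and the preceding subsection. The paper's proof is even terser than yours, and the extra details you supply — verifying $\tG(-1,y)=0$ from the factor $(1+u)$ on the right of~\eqref{eq:Gtilde0}, and spelling out the uniqueness step that upgrades $\tG(u,1)=G_1(u)$ to $\tG=G$ — are exactly the checks a careful reader would want.
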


We can rewrite~\eqref{F-complete} in a slightly different form, which 
gives directly~\eqref{F-param-y1} when $m=1$. 
This rewriting combines~\eqref{F-complete}
with the expression of $[u^>] \Phi_{0}(v)$ derived from~\eqref{phik}. 
The case $k=1$ of~\eqref{phik} reads
\beq\label{phi0}
[u^>] \Phi_{0}(v)= -\frac 1{m }[u^{>}] \left(\sum_{j=1}^m \Phi_j(v)
\sum_{i=1}^m A(u_i)^j\right).
\eeq
Recall that  $F(t,p;x,y)=\tG(z,p;u,y)$ has polynomial coefficients in
$u$,
and that $x=1$ when $u=0$.
Hence, returning to~\eqref{F-complete}:
\begin{eqnarray}
  F(t,p;x,y)&=&F(t,p;1,y)+ [u^>] \left( \sum_{k=0}^m
    \Phi_k(v)A(u)^k\right) \nonumber
\\
&=& F(t,p;1,y)+ [u^>] \left( \sum_{k=1}^m \Phi_k(v)\left( A(u)^k -\frac 1{m }
\sum_{i=1}^m A(u_i)^k\right)\right) \hskip 10mm
(\mbox{by } \eqref{phi0}) \nonumber
\\
&=&
({1+u}) [u^{\ge }]\left(\sum_{k=1}^m \frac{\Phi_k(v)}{1+u} \left(
  A(u)^k-\frac 1{m }\sum_{i=1}^m A(u_i)^k\right)\right) \label{F-alt}
\end{eqnarray}
by~\eqref{1+u}, and given that $F(t,p;x,y)=0$ when $u=-1$. 
The proof that $\Phi_k(v) \sum_{i=1}^m A(u_i)^k$ has coefficients in
$(1+u)\GK[u, \bu]$ (which is needed to apply~\eqref{1+u}) is similar to the proof that~\eqref{second-sum:a}
has coefficients in $v\GK[v]$. 

\medskip

\noindent {\bf Examples.} 
We now specialize~\eqref{F-alt} to $m=1$ and $m=2$.
When $m=1$,~\eqref{F-alt} coincides
with~\eqref{F-param-y1} (recall that  $\Phi_m= ve^{yV(v)}$).
When $m=2$, we obtain the following expression for $F^{(2)}$.
\begin{Corollary}
\label{coro:m=2}
Let $V(v), L$ and $ K(u) $ be 
the series given by~{\rm{(\ref{eq:defV}--\ref{eq:defK})}}, with $m=2$.
Perform the change of
variables~\eqref{t-x-param}, still with $m=2$.
   Then the weighted Frobenius series of the $2$-Tamari representation
  satisfies
 \begin{multline*}
     \frac{F^{(2)}(t,p;x,y)}{1+u}=
\\
[u^{\ge }]\left(\frac{\Phi_1(v)}{1+u} \left(
  A(u)-\frac {A(u_1)}{2 }-\frac {A(u_2)}{2 }\right)
+(1+\bu)^2e^{yV(v)} \left(
  A(u)^2-\frac {A(u_1)^2}{2 }-\frac {A(u_2)^2}{2 }
\right)\right),
  \end{multline*}
where 
$$
u_{1,2}=\frac{1+3u\pm (1+u)\sqrt{1+4u}}{2u^2},\quad \quad 
A(u)=\frac{u}{1+u}e^{-K(u)},
$$
and
$$
\Phi_1(v)=   \Phi_{1}^>(u)+\Phi_{1}^>(u_1)+\Phi_{1}^>(u_2)- 3\Phi_{1}^>(-1),
$$
with
$$
\Phi_1^>(u)=-[u^>]\left( (1+u)^3\bu^2e^{yV(v)}\left(A(u_1)+A(u_2)\right)\right).
$$
\end{Corollary}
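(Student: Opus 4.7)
The plan is to specialize~\eqref{F-alt} and the inductive definitions of Theorem~\ref{thm:trivariate} to the case $m=2$, and simplify. Setting $m=2$ in~\eqref{F-alt} turns the sum into two terms, indexed by $k=1$ and $k=2$. The $k=2$ contribution is
$$
\frac{\Phi_2(v)}{1+u}\Bigl(A(u)^2-\tfrac12 A(u_1)^2-\tfrac12 A(u_2)^2\Bigr),
$$
and since $\Phi_2(v)=ve^{yV(v)}$ with $v=(1+u)^3u^{-2}$, one has $\Phi_2(v)/(1+u)=(1+u)^2u^{-2}e^{yV(v)}=(1+\bu)^2 e^{yV(v)}$, which matches the second term stated in the corollary. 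The $k=1$ contribution is $\Phi_1(v)/(1+u)\bigl(A(u)-\tfrac12 A(u_1)-\tfrac12 A(u_2)\bigr)$, and the only work left is to identify $u_1,u_2$ and $\Phi_1(v)$ explicitly.

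For the roots, $u_0=u,u_1,u_2$ are the three solutions of $(1+U)^3=U^2v$, equivalently of $U^3+(3-v)U^2+3U+1=0$. Vieta's formulas give $u+u_1+u_2=v-3$ and $u\,u_1u_2=-1$, so $u_1,u_2$ are the roots of the quadratic $U^2-(v-3-u)U-1/u=0$. Using $v=(1+u)^3/u^2$ one checks the two algebraic identities
$$
v-3-u=\frac{1+3u}{u^2},\qquad (v-3-u)^2+\tfrac{4}{u}=\frac{(1+u)^2(1+4u)}{u^4},
$$
the second of which reduces to the polynomial identity $(1+3u)^2+4u^3=(1+u)^2(1+4u)$. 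Applying the quadratic formula then yields the claimed closed-form expression for $u_{1,2}$. This little algebraic check is the only computation of substance in the whole proof.

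To compute $\Phi_1(v)$ we apply the inductive formula~\eqref{phik} from Theorem~\ref{thm:trivariate} with $m=2$ and $k=2$. The sum over $\lambda\vdash j-k+1=1$ contains only $\lambda=(1)$, with $\binom{m-\ell(\lambda)}{k-\ell(\lambda)}=\binom{1}{1}=1$ and $m_{(1)}(A(u_1),A(u_2))=A(u_1)+A(u_2)$, while the prefactor is $1/\binom{m}{k}=1/\binom{2}{2}=1$. Substituting $\Phi_2(v)=(1+u)^3\bu^2e^{yV(v)}$ gives precisely
$$
\Phi_1^>(u)=-[u^>]\Bigl((1+u)^3\bu^2 e^{yV(v)}\bigl(A(u_1)+A(u_2)\bigr)\Bigr).
$$
Finally, the reconstruction formula $\Phi_{k-1}(v)=\sum_{i=0}^{m}\bigl(\Phi_{k-1}^>(u_i)-\Phi_{k-1}^>(-1)\bigr)$ from Theorem~\ref{thm:trivariate}, applied with $m=2$ and $k=2$, produces the stated expression
$$
\Phi_1(v)=\Phi_1^>(u)+\Phi_1^>(u_1)+\Phi_1^>(u_2)-3\Phi_1^>(-1),
$$
completing the corollary. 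No new analytic ingredient is required beyond Theorem~\ref{thm:trivariate}; the only potential obstacle is bookkeeping on the roots $u_i$ and the algebraic identity for the discriminant, both dispatched above.
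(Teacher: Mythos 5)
Your proof is correct and takes essentially the same approach as the paper: specializing \eqref{F-alt} to $m=2$ and reading off $\Phi_1^>$ and $\Phi_1$ from the inductive formulas of Theorem~\ref{thm:trivariate}. You actually supply a useful detail the paper omits, namely the Vieta-plus-discriminant verification that $u_{1,2}$ have the stated closed form, and that computation checks out.
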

This expression has been checked with {\sc Maple}, after computing
the first coefficients of $F^{(2)}(t,p;x,y)$ from the functional
equation~\eqref{eq:Fb}.

\section{Final comments}
\label{sec:comments}
\subsection{A constructive proof?}
\label{sec:final-guess}
 Our proof would not
have been possible without a preliminary task consisting in  \emm guessing,  the
expression~\eqref{Fx1} of $F(t,p;x,1)$.  This turned out to be difficult, in particular
because the standard guessing tools, like the {\sc Maple} package {\sc Gfun}, can only guess D-finite
\gfs, while the \gf\ of the numbers~\eqref{caractere}, or
even~\eqref{dimension}, is not D-finite. 
 The expression of $F(t,p;x,1)$ actually \emm becomes,
D-finite in $z$ (at least when only finitely many $p_i$'s are non-zero)
after the change of variables~\eqref{t-x-param}.
The correct parametrization of the
variable $t$ by $z$ was not hard to obtain using the (former)
conjecture~\eqref{caractere} and the Lagrange inversion formula, but we had no indication on the correct
parametrization of $x$. Our discovery of it only came after a long
study of special cases (for instance $m=1$ and $p_i={\mathbbm
  1}_{i=1}$), 
and an analogy with the enumeration of unlabelled Tamari
intervals~\cite{bousquet-fusy-preville}. 
Obviously, a constructive proof of our result would be most welcome,
not to mention  a bijective one.


\subsection{The action of $\Sn_n$ on prime $m$-Tamari intervals}
Other remarkable formulas, as simple as~\eqref{caractere}
and~\eqref{dimension}, can be derived from our expression~\eqref{Fx1} of
the series $F^{(m)}(t,p;x,1)$. Let us for instance focus on the action
of $\Sn_n$ on \emm prime, intervals, that is, intervals $[P,Q]$ such that $P$
has only two contacts with the line $\{x=my\}$. The character $\tilde \chi_m$ of this
representation  is obtained by extracting the coefficient of $x^2$
from $F^{(m)}(t,p;x,1)$, and the Lagrange inversion formula gives,
for a partition $\lambda$ of length $ \ell$:
$$
\tilde \chi_m(\lambda)= \left((m+1)n-1\right)^{\ell-2} \prod_{1 \leq i \leq \ell}
{\binom{(m+1) \lambda_i-1}{\lambda_i}}. 
$$
In particular, the number of  prime labelled $m$-Tamari intervals of size $n$ is
$$ 
((m+1)n-1)^{n-2}m^n.
$$
For \emm
unlabelled,  intervals, it follows from~\cite[Coro.~11]{bousquet-fusy-preville}
that the corresponding numbers are
$$
\frac m{n((m+1)n-1)}{{(m+1)^2n-m-1} \choose {n-1}}.
$$

\subsection{The number of unlabelled $m$-Tamari intervals}
Recall from Lemma~\ref{lem:ordinary} that the series $F^{(m)}(t,p;x,y)$ can also
be understood as the \gf\ of (weighted) unlabelled $m$-Tamari
intervals. In particular, when $p={\bf 1}=(1,1,\ldots)$ and $y=1$, we
have
$$ 
h_k=\sum_{\lambda \vdash k} \frac{1}{z_\lambda}=1,
$$
(because $k!/z_\lambda$ counts permutations of cycle type $\lambda$),
so that
 $$
F^{(m)}(t,{\bf 1};x,1)
=
\sum_{I=[P,Q]  \ \hbox{\small{unlabelled}} }t^{|I|} x^{c(P)}.
$$
By specializing Theorem~\ref{thm:main} to the case $y=1, p={\bf 1}$, we
recover  the following result, already proved
in~\cite{bousquet-fusy-preville}. The result
of~\cite{bousquet-fusy-preville} also keeps track of the size of the
first ascent, but we have not been able to recover it in this generality.
\begin{Proposition}[\cite{bousquet-fusy-preville}]
Let $z'$ and $u'$ be two indeterminates, and write
\begin{eqnarray}\label{eq:paramNonEtiq}
  t=z'(1-z')^{m^2+2m} \ \ \ \ \mbox{ and} \ \ \ \ x=\frac{1+u'}{(1+z'u')^{m+1}}.
\end{eqnarray}
Then the ordinary generating function of unlabelled $m$-Tamari intervals,
counted by the size and the number of contacts, becomes a series in
$z'$ with polynomial coefficients in $u'$, and admits the following
closed form expression: 
\begin{eqnarray}\label{eq:serieNonEtiq} 
 F^{(m)}(t,{\bf 1};x,1)=
\frac{(1+u')(1+z'u')}{u'(1-z')^{m+2}}\left(\frac{1+u'}{(1+z'u')^{m+1}}-1\right).
\end{eqnarray}
\end{Proposition}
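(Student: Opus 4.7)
The plan is to specialize Theorem~\ref{thm:main} (equivalently~\eqref{Fx1}) to $p=\mathbf{1}$ and convert the change of variables~\eqref{t-x-param} into~\eqref{eq:paramNonEtiq}. With $p_k=1$ for all $k$, the series $V(v)$ telescopes to $-\log(1-vz)$, and Lagrange inversion applied to $\omega=z(1+\omega)^{m+1}$ (defining $\omega$ as a formal power series in $z$) yields $[z^n](\omega^i/i)=(1/n)\binom{(m+1)n}{n-i}$. Comparing with~\eqref{eq:defL}--\eqref{eq:defK} then gives the key closed forms
$$L = (m+1)\log(1+\omega),\qquad K(u) = -\log(1-u\omega),$$
so that $e^L=(1+\omega)^{m+1}$ and $e^{K(u)}=1/(1-u\omega)$.

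Next I reparametrize by setting $z' = \omega/(1+\omega)$, which is equivalent to $\omega = z'/(1-z')$, $1+\omega = 1/(1-z')$ and $z = \omega/(1+\omega)^{m+1} = z'(1-z')^m$. Substituting gives $e^L = 1/(1-z')^{m+1}$, and hence $t = z e^{-mL} = z'(1-z')^{m^2+2m}$, matching the first relation of~\eqref{eq:paramNonEtiq}. To match the second, I introduce $u'$ via the substitution $u = u'(1-z')/(1+z'u')$. A direct computation yields the two clean identities
$$1+u = \frac{1+u'}{1+z'u'} \qquad\text{and}\qquad 1 - u\omega = \frac{1}{1+z'u'},$$
from which $x = (1+u)(1-u\omega)^m = (1+u')/(1+z'u')^{m+1}$, as required.

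Finally I substitute into the formula~\eqref{Fx1}, which I rewrite as $F = (1+\bar u)\,e^{K(u)+L}\,(x-1)$. With $e^{K(u)+L} = (1+z'u')/(1-z')^{m+1}$ and $1+\bar u = (1+u)/u = (1+u')/(u'(1-z'))$, the product collapses to
$$F^{(m)}(t,\mathbf{1};x,1) = \frac{(1+u')(1+z'u')}{u'(1-z')^{m+2}}(x-1),$$
which is~\eqref{eq:serieNonEtiq}. The main obstacle is identifying the closed form $K(u) = -\log(1-u\omega)$ and the substitution $u = u'(1-z')/(1+z'u')$; both are driven by the Lagrange-inversion equation $\omega = z(1+\omega)^{m+1}$ together with the demand that the quantity $1-u\omega$ collapse to a simple rational function of $u'$. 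Once these two ingredients are in hand, the remainder of the argument is a string of routine algebraic simplifications.
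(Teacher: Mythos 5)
Your proof is correct and reaches the result by the same overall route as the paper: specialize~\eqref{Fx1} to $p=\mathbf{1}$, express $e^L$ and $e^{K(u)}$ in terms of a series $\omega$ (your $\omega$ is the paper's $M-1$, with $M=1+zM^{m+1}$), and then transport the parametrization~\eqref{t-x-param} to~\eqref{eq:paramNonEtiq} via $\omega = z'/(1-z')$ and $u = u'(1-z')/(1+z'u')$. The one genuine divergence is in how the key identities $e^L=(1+\omega)^{m+1}$ and $e^{K(u)}=1/(1-u\omega)$ are established: you obtain them directly by Lagrange inversion applied to $\omega=z(1+\omega)^{m+1}$ (checking coefficients of $\omega^i/i$ against~\eqref{eq:defL}--\eqref{eq:defK}), whereas the paper proves the same pair of identities, stated as~\eqref{eq:Lquandp=1}, by a combinatorial lattice-path argument: interpreting $z\,\tfrac{d}{dz}L$ as the generating function of bridges, applying the cycle lemma to relate bridges to prime ballot paths, and decomposing paths of positive depth for $K(u)$. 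Your Lagrange-inversion route is shorter and more mechanical; the paper's route is combinatorially informative and fits the paper's broader bijective perspective, but both are correct and the remaining algebra is identical.
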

\noindent As shown in~\cite{bousquet-fusy-preville}, this implies that the
number of unlabelled $m$-Tamari intervals of size $n$  is
$$
\frac {m+1}{n(mn+1)} {(m+1)^2 n+m\choose n-1}.
$$
\begin{proof}
We need to relate the parametrizations~\eqref{t-x-param} and~\eqref{eq:paramNonEtiq}, and
 then the expressions~\eqref{Fx1} and~\eqref{eq:serieNonEtiq}. Let
 $M\equiv M(z)$ be the unique formal power series in $z$ satisfying
 \begin{eqnarray}\label{eq:Mballot}
     M=1+zM^{m+1}.
   \end{eqnarray}
  We claim that, when $p={\bf 1}$,
\begin{eqnarray}\label{eq:Lquandp=1}
e^{L} = M^{m+1} \quad \hbox{and} \quad   e^{K(u)} =
\frac{1}{1-zuM^{m+1}}=\frac{1}{1-u(M-1)}. 
\end{eqnarray}
This establishes the equivalence between the parametrizations~\eqref{t-x-param}
and~\eqref{eq:paramNonEtiq}, with 
$$
M=\frac 1{1-z'} \quad \hbox{and} \quad u= \frac{u'(1-z')}{1+u'z'}.
$$
 The equivalence between the two expressions of $F^{(m)}$,
 namely~\eqref{Fx1} and~\eqref{eq:serieNonEtiq}, also follows.

\smallskip
We will prove~\eqref{eq:Lquandp=1} using combinatorial interpretations of the
series $K, L, M$ in terms of lattice paths on the square grid, starting at
$(0,0)$ and formed of north and east steps. First, note that $M$ counts
$m$-ballot paths (defined in the introduction) by the size. Also, 
$$
B(z):=z  \frac{d}{dz}L(z,{\bf 1})= \sum_{k\ge 1} {{(m+1)k}\choose k}
z^k
$$ 
counts, by the number of north steps,  non-empty paths ending on the line
$\{x=my\}$ (often called \emm bridges,,
hence the notation $B$). We have
$M=1/(1-P)$, where $P$ counts \emm prime, ballot paths (those that
only have two contacts). 
 By a variant of the cycle   lemma~\cite[Section~4.1]{Ba-Fl}, there exists a size
 preserving bijection between  non-empty bridges and pairs  formed of a prime
 excursion with a marked step, and an 
   excursion. 
Since a bridge having $n$ north steps has $(m+1)n$ steps
   in total, this gives: 
   \begin{eqnarray}\label{eq:cyclelemma}
   z  \frac{d}{dz}L(z,{\bf 1})=  B(z)=\frac{z (m+1) P'(z)}{1-P(z)} =
 z \frac{d}{dz}\left(\ln M(z)^{m+1}\right).
   \end{eqnarray}
 Integrating over $z$ and then exponentiating gives the first part of~\eqref{eq:Lquandp=1}.

Let us now consider the series $K(z,{\bf 1};u)$. We will interpret it
in terms of paths of length $k(m+1)$ for some $k$ (to generalize the
terminology used for ballot paths, we say that such paths have
\emm size, $k$). 
The \emm depth, of path ending
at $(x,y)$ is  $x-my$. Observe that
$$
z\frac{d}{dz} K(z,{\bf 1};u)=
\sum_{k\geq 1} z^k
\sum_{i=1}^{k} {(m+1)k\choose k-i} u^{i},
$$ 
counts paths of length multiple of $(m+1)$ having a positive depth 
($z$ accounts for 
the size, divided by $(m+1)$, and $u$ for the depth, also divided by $(m+1)$).
Let $w$ be such a path, and look at the shortest prefixes of $w$ of
depth 1, then depth 2, and so on up to depth
$(m+1)i$. This factors $w$ into a sequence
$(M_1,e,M_2,e,\ldots,M_{(m+1)i},e,B)$, where the $M_i$ are ballot paths,
$e$ stands for an east step and $B$ is a bridge. Accordingly,
$$
z\frac{d}{dz}K(z,{\bf 1};u) = (1+B(z)) \left(\frac{1}{1-zuM(z)^{m+1}}-1 \right) =
z\frac{d}{dz}\left( \ln \frac{1}{1-zuM(z)^{m+1}}\right),
$$
by \eqref{eq:cyclelemma}.  Integrating and exponentiating gives the second
part of~\eqref{eq:Lquandp=1}.
\end{proof}

\subsection{A $q$-analogue of the functional equation} 
As described in the introduction, the numbers~\eqref{dimension} are
conjectured to give the dimension  of certain polynomial rings
generalizing $\DR_{3  ,n}$. These rings are tri-graded (with respect to
the sets of variables 
$\{x_i\}$, $\{y_i\}$ and $\{z_i\}$), 
 and it is conjectured~\cite{bergeron-preville}
that the dimension of the homogeneous component in the $x_i$'s of degree $k$ 
is the number of labelled intervals $[P,Q]$ in $\cT_{n}^{(m)}$ such that the
longest chain from $P$ to $Q$, in the Tamari order, has length
$k$. One can recycle the recursive description of intervals described
in Section~\ref{sec:eq} to generalize the functional equation of
Proposition~\ref{prop:eq} (taken when $p_i={\mathbbm
  1}_{i=1}$), 
by taking into account (with a new variable $q$) this
distance. Eq.~\eqref{eq:Fb} becomes
$$
\frac{\partial F}{\partial y} (x,y)=
 tx(F(x,1)\Delta)^{(m)}(F(x,y)) ,
$$
where now
$$
\Delta S(x)=\frac{S(qx)-S(1)}{qx-1}.
$$
Here $F(1,1)$ counts labelled $m$-Tamari intervals by the size and the above
defined distance. But we have not been able to conjecture any simple $q$-analogue
of~\eqref{dimension}.

\spacebreak

\bigskip
\noindent{\bf Acknowledgements.} We are grateful to François Bergeron
for advertising in his  lectures various  beautiful conjectures  related to
 Tamari intervals.
   We also thank \'Eric Fusy and Gilles Schaeffer   for
interesting discussions on this topic, and  thank \'Eric once more for
allowing us to reproduce some figures
of~\cite{bousquet-fusy-preville}. 
%

\bibliographystyle{plain}
\bibliography{tamar}

\begin{thebibliography}{10}

\bibitem{armstrong-arrangements}
D.~Armstrong.
\newblock Hyperplane arrangements and diagonal harmonics.
\newblock {\em J. Combinatorics}, to appear. See also Arxiv:1005.1949.

\bibitem{armstrong-tesler}
D.~Armstrong, A.~Garsia, J.~Haglund, B.~Rhoades, and B.~Sagan.
\newblock Combinatorics of {T}esler matrices in the theory of parking functions
  and diagonal harmonics.
\newblock {\em J. Combinatorics}, 3(3):451–494, 2012.

\bibitem{hexacephale}
C.~Banderier, M.~Bousquet-M{\'e}lou, A.~Denise, P.~Flajolet, D.~Gardy, and
  D.~Gouyou-Beauchamps.
\newblock Generating functions for generating trees.
\newblock {\em Discrete Math.}, 246(1-3):29--55, 2002.

\bibitem{Ba-Fl}
C.~Banderier and P.~Flajolet.
\newblock Basic analytic combinatorics of directed lattice paths.
\newblock {\em Theoret. Comput. Sci.}, 281(1-2):37--80, 2002.

\bibitem{bergeron-preville}
B.~Bergeron and L.-F. Pr\'eville-Ratelle.
\newblock Higher trivariate diagonal harmonics via generalized {T}amari posets.
\newblock {\em J. Combinatorics}, 3(3):317--341, 2012.

\bibitem{BeBo07}
O.~Bernardi and N.~Bonichon.
\newblock Intervals in {C}atalan lattices and realizers of triangulations.
\newblock {\em J. Combin. Theory Ser. A}, 116(1):55--75, 2009.

\bibitem{mbm-chapuy-preville}
M.~Bousquet-M\'elou, G.~Chapuy, and L.-F. Pr\'eville-Ratelle.
\newblock Tamari lattices and parking functions: proof of a conjecture of {F}.
  {B}ergeron.
\newblock Arxiv:1109.2398, 2011.

\bibitem{bousquet-fusy-preville}
M.~Bousquet-M\'elou, \'E. Fusy, and L.-F. Pr\'eville-Ratelle.
\newblock The number of intervals in the $m$-{T}amari lattices.
\newblock {\em Electron. J. Combin.}, 18(2):Research Paper 31, 26 pp.
  (electronic), 2011.

\bibitem{mbm-jehanne}
M.~Bousquet-M\'elou and A.~Jehanne.
\newblock Polynomial equations with one catalytic variable, algebraic series
  and map enumeration.
\newblock {\em J. Combin. Theory Ser. B}, 96:623--672, 2006.

\bibitem{mbm-mishna}
M.~Bousquet-M{\'e}lou and M.~Mishna.
\newblock Walks with small steps in the quarter plane.
\newblock {\em Contemp. Math.}, 520:1--40, 2010.

\bibitem{dvoretzky}
A.~Dvoretzky and Th. Motzkin.
\newblock A problem of arrangements.
\newblock {\em Duke Math. J.}, 14:305--313, 1947.

\bibitem{friedman-tamari}
H.~Friedman and D.~Tamari.
\newblock Probl\`emes d'associativit\'e: {U}ne structure de treillis finis
  induite par une loi demi-associative.
\newblock {\em J. Combinatorial Theory}, 2:215--242, 1967.

\bibitem{garsia-hicks-stout}
A.~Garsia, A.~Hicks, and A.~Stout.
\newblock The case $k= 2$ of the shuffle conjecture.
\newblock {\em J. Combinatorics}, 2(2):193--229, 2011.

\bibitem{MR1935784}
A.~M. Garsia and J.~Haglund.
\newblock A proof of the {$q,t$}-{C}atalan positivity conjecture.
\newblock {\em Discrete Math.}, 256(3):677--717, 2002.

\bibitem{garsia-haiman-lagrange}
A.~M. Garsia and M.~Haiman.
\newblock A remarkable {$q,t$}-{C}atalan sequence and {$q$}-{L}agrange
  inversion.
\newblock {\em J. Algebraic Combin.}, 5(3):191--244, 1996.

\bibitem{garsia-xin-zabrocki}
A.~M. Garsia, G.~Xin, and M.~Zabrocki.
\newblock Hall-{L}ittlewood operators in the theory of parking functions and
  diagonal harmonics.
\newblock {\em Int. Math. Res. Not. IMRN}, 6:1264--1299, 2012.

\bibitem{MR1972636}
J.~Haglund.
\newblock Conjectured statistics for the {$q,t$}-{C}atalan numbers.
\newblock {\em Adv. Math.}, 175(2):319--334, 2003.

\bibitem{haglund-book}
J.~Haglund.
\newblock {\em The {$q$},{$t$}-{C}atalan numbers and the space of diagonal
  harmonics}, volume~41 of {\em University Lecture Series}.
\newblock American Mathematical Society, Providence, RI, 2008.

\bibitem{haglund-polynomial}
J.~Haglund.
\newblock A polynomial expression for the {H}ilbert series of the quotient ring
  of diagonal coinvariants.
\newblock {\em Adv. Math.}, 227:2092--2106, 2011.

\bibitem{MR2115257}
J.~Haglund, M.~Haiman, N.~Loehr, J.~B. Remmel, and A.~Ulyanov.
\newblock A combinatorial formula for the character of the diagonal
  coinvariants.
\newblock {\em Duke Math. J.}, 126(2):195--232, 2005.

\bibitem{MR2163448}
J.~Haglund and N.~Loehr.
\newblock A conjectured combinatorial formula for the {H}ilbert series for
  diagonal harmonics.
\newblock {\em Discrete Math.}, 298(1-3):189--204, 2005.

\bibitem{haglund-touch-points}
J.~Haglund, J.~Morse, and M.~Zabrocki.
\newblock A compositional shuffle conjecture specifying touch points of the
  {D}yck path.
\newblock {\em Canad. J. Math.}, 64(4):822--844, 2012.

\bibitem{HaimanPreu}
M.~Haiman.
\newblock Vanishing theorems and character formulas for the {H}ilbert scheme of
  points in the plane.
\newblock {\em Invent. Math.}, 149(2):371--407, 2002.

\bibitem{HaiConj}
M.~D. Haiman.
\newblock Conjectures on the quotient ring by diagonal invariants.
\newblock {\em J. Algebraic Combin.}, 3(1):17--76, 1994.

\bibitem{hicks}
Angela~S. Hicks.
\newblock Two parking function bijections: a sharpening of the {$q$},
  {$t$}-{C}atalan and {S}hr\"oder theorems.
\newblock {\em Int. Math. Res. Not. IMRN}, 13:3064--3088, 2012.

\bibitem{HT72}
S.~Huang and D.~Tamari.
\newblock Problems of associativity: A simple proof for the lattice property of
  systems ordered by a semi-associative law.
\newblock {\em J. Combin. Theory Ser. A}, 13(1):7--13, 1972.

\bibitem{knuth4}
D.~E. Knuth.
\newblock {\em The art of computer programming. {V}ol. 4, {F}asc. 4}.
\newblock Addison-Wesley, Upper Saddle River, NJ, 2006.
\newblock Generating all trees---history of combinatorial generation.

\bibitem{lee}
M.~Lee, L.~Li, and N.~A. Loehr.
\newblock Limits of modified higher $(q,t)$-{C}atalan numbers.
\newblock Arxiv:1110.5850, 2011.

\bibitem{loehr-thesis}
N.~Loehr.
\newblock {\em Multivariate analogues of {C}atalan numbers, parking functions,
  and their extensions}.
\newblock PhD thesis, UCSD, San Diego, USA, 2003.

\bibitem{muehle}
H.~M\"uhle.
\newblock On the {EL}-shellability of the $m$-{T}amari lattices.
\newblock Arxiv:1201.2020, 2012.

\bibitem{riordan}
J.~Riordan.
\newblock Ballots and trees.
\newblock {\em J. Combinatorial Theory}, 6:408--411, 1969.

\bibitem{sagan-book}
B.~E. Sagan.
\newblock {\em The symmetric group}, volume 203 of {\em Graduate Texts in
  Mathematics}.
\newblock Springer-Verlag, New York, second edition, 2001.
\newblock Representations, combinatorial algorithms, and symmetric functions.

\bibitem{stanley-vol2}
R.~P. Stanley.
\newblock {\em Enumerative combinatorics. {V}ol. 2}, volume~62 of {\em
  Cambridge Studies in Advanced Mathematics}.
\newblock Cambridge University Press, Cambridge, 1999.

\bibitem{pitman-stanley}
R.~P. Stanley and J.~Pitman.
\newblock A polytope related to empirical distributions, plane trees, parking
  functions, and the associahedron.
\newblock {\em Discrete Comput. Geom.}, 27(4):603--634, 2002.

\bibitem{yan}
C.~H. Yan.
\newblock Generalized parking functions, tree inversions, and multicolored
  graphs.
\newblock {\em Adv. in Appl. Math.}, 27(2-3):641--670, 2001.

\end{thebibliography}

\spacebreak

\end{document}